\theoremstyle{plain}
\newtheorem{thm}{Theorem}
  \theoremstyle{definition}
  \theoremstyle{remark}
  \newtheorem{rem}[thm]{Remark}
  \theoremstyle{plain}
  \theoremstyle{plain}
  \newtheorem{lem}[thm]{Lemma}
  \theoremstyle{plain}
  \newtheorem{cor}[thm]{Corollary}
 \theoremstyle{definition}
  \newtheorem{example}[thm]{Example}
  \theoremstyle{remark}
  \newtheorem*{rem*}{Remark}
  \theoremstyle{definition}
\newtheorem*{question*}{\it{QUESTION}}
\theoremstyle{plain}
\newtheorem{conjecture}{Conjecture}
\newcommand{\N}{\mathbb{N}}
\newcommand{\R}{{\mathbb{R}}}
\newcommand{\C}{{\mathbb{C}}}
\newcommand{\Z}{{\mathbb{Z}}}
\newcommand{\dd}{{\rm d}}
\newcommand{\ii}{{\rm i}}
\renewcommand{\Re}{\mathop\mathrm{Re}\nolimits}
\renewcommand{\Im}{\mathop\mathrm{Im}\nolimits}
\DeclareMathOperator{\arccot}{arccot}
\begin{document}

\title[]{Asymptotic behavior and zeros of the Bernoulli polynomials of the second kind}

\author{Franti{\v s}ek {\v S}tampach}
\address[Franti{\v s}ek {\v S}tampach]{
	Department of Applied Mathematics, Faculty of Information Technology, Czech Technical University in~Prague, 
	Th{\' a}kurova~9, 160~00 Praha, Czech Republic
	}
\email{stampfra@fit.cvut.cz}

\subjclass[2010]{11B68, 30C15, 41A60}

\keywords{Bernoulli polynomials of the second kind, asymptotic behavior, zeros, integral representation}

\date{\today}

\begin{abstract}
The main aim of this article is a careful investigation of the asymptotic behavior of zeros of Bernoulli polynomials of the second kind. It is shown that the zeros are all real and simple. The asymptotic expansions for the small, large, and the middle zeros are computed in more detail. The analysis is based on the asymptotic expansions of the Bernoulli polynomials of the second kind in various regimes.
\end{abstract}

\maketitle

\section{Introduction}

The Bernoulli polynomials of the second kind $b_{n}$ are defined by the generating function
\begin{equation}
 \sum_{n=0}^{\infty}b_{n}(x)\frac{t^{n}}{n!}=\frac{t}{\ln(1+t)}(1+t)^{x}, \quad |t|<1.
\label{eq:gener_func_BP_sec}
\end{equation}
Up to a shift, they coincide with the generalized Bernoulli polynomials of order $n$, precisely, 
\begin{equation}
 b_{n}(x)=B_{n}^{(n)}(x+1),
 \label{eq:BP_sec_rel_B}
\end{equation}
where the generalized Bernoulli polynomials of order $a\in\C$ are defined by the generating function
\begin{equation}
  \sum_{n=0}^{\infty}B_{n}^{(a)}(x)\frac{t^{n}}{n!}=\left(\frac{t}{e^{t}-1}\right)^{a}e^{xt}, \quad |t|<2\pi.
\label{eq:gener_B_n_a}
\end{equation}
Among the numerous branches of mathematics where the generalized Bernoulli polynomials play a significant role, we emphasize the theory of finite integration and interpolation which is nicely exposed in the classical treatise of N\"{o}rlund~\cite{norlund_24} (unfortunately still not translated to English).

In contrast to the Bernoulli polynomials of the first kind~$B_{n}\equiv B_{n}^{(1)}$, the second kind Bernoulli polynomials appear less frequently. Still a great account of the research focuses on the study of their various generalizations and their combinatorial, algebraic, and analytical properties; let us mention at least~\cite{carlitz-sm61,guo_etal-rmjm16,gupta_prabhakar-ijpam80,kim_etal-ade14,roman84,srivastava-choi01, srivastava-todorov_jmaa88}. Concerning the Bernoulli polynomials of the first kind~$B_{n}$, a significant part of the research is devoted to the study of the asymptotic properties of their zeros that exhibit a fascinating complexity and structure; the asymptotic zero distribution of~$B_{n}$, for $n\to\infty$, was obtained in~\cite{boyer-goh_aam07}, for the results on the asymptotic behavior of real zeros of~$B_{n}$ for $n$ large, see~\cite{delange_86,delange_91,efimov_fm08,inkeri-auts59,veselov-ward_jmaa05}, the proof that the zeros of~$B_{n}$ are actually all simple is in~\cite{brillhart-jram69,dilcher_aa08}, and further results on the number of real zeros of~$B_{n}$ can be found in~\cite{edwards-leeming_jat12,leeming_jat89}.

On the other hand, it seems that the asymptotic behavior of zeros of the Bernoulli polynomials of the second kind has not been studied yet. This contrast was a motivation for the present work whose main goal is to fill this gap. To this end, let us mention that the asymptotic behavior of the polynomials $B_{n}^{(a)}$, for $n$ large and the order~$a$ not depending on~$n$, was studied in~\cite{lopez-temme_jmaa99,lopez-temme_jmaa10}.

Throughout the paper, we prefer to work with the polynomials $B_{n}^{(n)}$ rather than the Bernoulli polynomials of the second kind~$b_{n}$ themselves which, however, makes no difference due to the simple relation~\eqref{eq:BP_sec_rel_B}. First, we prove that, unlike zeros of the Bernoulli polynomials of the first kind, the zeros of the second kind Bernoulli polynomials can be only real. Moreover, all zeros are simple, located in the interval~$[0,n]$, and interlace with the integers $1,2,\dots,n$ (Theorem~\ref{thm:Ber_sec_zer_real}).

Next, we focus on the small zeros of~$B_{n}^{(n)}$, i.e., the zeros that are located in a fixed distance from the origin. The proof of their asymptotic behavior (Theorem~\ref{thm:asympt_zer_small}) is based on a complete local uniform asymptotic formula for~$B_{n}^{(n)}$ (Theorem~\ref{thm:Ber_sec_compl_asympt}). It turns out that the zeros of~$B_{n}^{(n)}$ are distributed symmetrically around the point $n/2$ and hence we obtain also the asymptotic formulas for the large zeros, i.e., the zeros located in a fixed distance from $n$.

Further, the asymptotic behavior of zeros of~$B_{n}^{(n)}$ located in a fixed distance from a point~$\alpha n$, where $\alpha\in(0,1)$, is examined. The analysis uses an interesting integral representation of $B_{n}^{(n)}$ which can be a formula of independent interest (Theorem~\ref{thm:beta_integr_repre}). With the aid of Laplace's method, the leading term of the asymptotic expansion of~$B_{n}^{(n)}(z+\alpha n)$, as $n\to\infty$, is deduced (Theorem~\ref{thm:asympt_ber_alpha}) and limit formulas for the zeros located around $\alpha n$ then follow (Corollary~\ref{cor:lim_zer_alp}). A particular attention is paid to the middle zeros, i.e., the case $\alpha=1/2$. In this case, more detailed results are obtained. First, a complete local uniform asymptotic expansion for $B_{n}^{(n)}(z+n/2)$, as $n\to\infty$, is derived (Theorem~\ref{thm:asympt_ber_alpha1/2}). As a consequence, we obtain several terms in the asymptotic expansion of the middle zeros (Theorem~\ref{thm:asympt_middle_zer}).

The asymptotic formulas for~$B_{n}^{(n)}$ that are used to analyze the zeros can be viewed as the asymptotic expansion of the scaled polynomials $B_{n}^{(n)}(nz)$ in the oscilatory region $z\in(0,1)$ and close to the edge points $z=0$ and $z=1$. To complete the picture, the leading term of the asymptotic expansion of $B_{n}^{(n)}(nz)$ in the non-oscilatory regime or the zero-free region $z\in\C\setminus[0,1]$ is derived (Theorem~\ref{thm:asympt_ber_non-osc}) using the saddle point method. Finally, we formulate several open research problems in the end of the paper.

\section{Asymptotic behavior and zeros of the Bernoulli polynomials of the second kind}

First, we recall several basic properties of the polynomials $B_{n}^{(a)}$ that are used in this section. For $n\in\N$ and $x,a\in\C$, the identities 
\begin{equation}
 B_{n}^{(a)}(x+1)=B_{n}^{(a)}(x)+nB_{n-1}^{(a-1)}(x), \quad \frac{\partial}{\partial x}B_{n}^{(a)}(x)=nB_{n-1}^{(a)}(x)
\label{eq:ber_a_ident1}
\end{equation}
and
\begin{equation}
 B_{n}^{(a)}(-x)=(-1)^{n}B_{n}^{(a)}(x+a)
 \label{eq:symmetry_id_genBP}
\end{equation}
can be derived readily from~\eqref{eq:gener_B_n_a}, see also~\cite[Chp.~4, Sec.~2]{roman84}. Next, by making use of the special value 
\begin{equation}
 B_{n-1}^{(n)}(x)=\prod_{k=1}^{n-1}(x-k)
\label{eq:B_n-1_n_explicit}
\end{equation}
together with the identities from~\eqref{eq:ber_a_ident1}, one easily deduces the well-known integral formula
\begin{equation}
 B_{n}^{(n)}(x)=\int_{0}^{1}\prod_{k=1}^{n}(x+y-k)\,\dd y.
 \label{eq:int_repre_simple}
\end{equation}

\subsection{Real and simple zeros}

An elementary proof of the reality and simplicity of the zeros of the Bernoulli polynomials of the second kind is based on an inspection of the integer values of these polynomials.

\begin{thm}\label{thm:Ber_sec_zer_real}
 Zeros of $B_{n}^{(n)}$ are real and simple. In addition, if $x_{1}^{(n)}<x_{2}^{(n)}<\dots<x_{n}^{(n)}$ denotes the zeros of $B_{n}^{(n)}$,
 then
 \begin{equation}
 k-1<x_{k}^{(n)}<k,
 \label{eq:loc_zeros_first}
 \end{equation}
 for $1\leq k\leq n$. Further, the zeros of $B_{n}^{(n)}$ are distributed symmetrically around the value $n/2$, i.e.,
 \begin{equation}
  x_{k}^{(n)}=n-x_{n-k+1}^{(n)},
 \label{eq:symmetry_zeros}
 \end{equation}
 for $1\leq k\leq n$. In particular, $x_{n}^{(2n-1)}=n-1/2$.
\end{thm}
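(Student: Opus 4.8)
The plan is to read off the signs of $B_{n}^{(n)}$ at the integers $0,1,\dots,n$ directly from the integral representation~\eqref{eq:int_repre_simple}, and then combine a sign-change count with a degree count. First I would note that $\prod_{k=1}^{n}(x+y-k)$ is monic of degree $n$ in $x$ for every fixed $y$, so integrating over $y\in[0,1]$ shows that $B_{n}^{(n)}$ is a monic polynomial of degree exactly $n$; in particular it has at most $n$ zeros. Next, fix an integer $j\in\{0,1,\dots,n\}$ and examine the integrand $\prod_{k=1}^{n}(j+y-k)$ for $y\in(0,1)$. Since $y\in(0,1)$, each factor $j+y-k$ never vanishes, being positive precisely when $k\le j$ and negative precisely when $k\ge j+1$; hence the product keeps the constant sign $(-1)^{n-j}$ on the whole open interval. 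As the integrand has a strict sign, the integral is nonzero and
\begin{equation*}
 \sgn B_{n}^{(n)}(j)=(-1)^{n-j},\qquad j=0,1,\dots,n.
\end{equation*}

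With these values in hand, the reality, simplicity, and interlacing follow at once. The signs of $B_{n}^{(n)}$ at the consecutive integers $0,1,\dots,n$ strictly alternate, so the intermediate value theorem produces at least one zero in each of the $n$ open intervals $(j-1,j)$, $1\le j\le n$. A polynomial of degree $n$ has at most $n$ zeros, so each interval contains exactly one, every zero is real and simple, and $k-1<x_{k}^{(n)}<k$, which is~\eqref{eq:loc_zeros_first}.

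For the symmetry~\eqref{eq:symmetry_zeros} I would use the reflection identity~\eqref{eq:symmetry_id_genBP} with $a=n$. Substituting $x\mapsto u-n$ turns $B_{n}^{(n)}(-x)=(-1)^{n}B_{n}^{(n)}(x+n)$ into
\begin{equation*}
 B_{n}^{(n)}(n-u)=(-1)^{n}B_{n}^{(n)}(u),
\end{equation*}
so $u$ is a zero if and only if $n-u$ is a zero. Thus the zero set is invariant under $u\mapsto n-u$, and respecting the ordering gives $x_{k}^{(n)}=n-x_{n-k+1}^{(n)}$. Finally, the special value is the self-symmetric case: for $N=2n-1$ the relation above at $u=N/2$ reads $B_{N}^{(N)}(N/2)=(-1)^{N}B_{N}^{(N)}(N/2)=-B_{N}^{(N)}(N/2)$, so $N/2=n-\tfrac12$ is a zero; since it lies in $(n-1,n)$, where by interlacing there is a unique zero, it must be $x_{n}^{(2n-1)}$.

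I expect no serious obstacle here; the only point requiring genuine care is the sign bookkeeping for the integrand on $(0,1)$, in particular verifying that it never vanishes so that the integer values are strictly nonzero and the sign alternation, hence the strict interlacing, is real rather than an artifact of a zero sitting at an integer.
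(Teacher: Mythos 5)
Your proposal is correct and follows essentially the same route as the paper: both read off the alternating signs $(-1)^{n+k}B_{n}^{(n)}(k)>0$ from the integral representation~\eqref{eq:int_repre_simple}, combine the intermediate value theorem with the degree count to get exactly one simple zero in each interval $(k-1,k)$, and derive the symmetry of the zeros from the reflection identity~\eqref{eq:symmetry_id_genBP}. The only differences are cosmetic (you keep the signed product directly where the paper regroups it into two positive factors, and you center the reflection at $n$ instead of $n/2$), and your explicit argument for $x_{n}^{(2n-1)}=n-1/2$ is a welcome detail the paper leaves implicit.
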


\begin{proof}
 We start with the integral representation~\eqref{eq:int_repre_simple} which implies that
 \[
  B_{n}^{(n)}(k)=\int_{0}^{1}\prod_{j=1}^{n}\left(y+k-j\right)\dd y=
  (-1)^{n+k}\int_{0}^{1}\left[\prod_{i=0}^{k-1}\left(y+i\right)\right]\left[\prod_{j=1}^{n-k}\left(j-y\right)\right]\dd y,
 \]
 for any integer $0\leq k\leq n$. Since each factor in the above integral is positive for $y\in(0,1)$, the whole integral has to be positive and therefore
 \begin{equation}
  (-1)^{n+k}B_{n}^{(n)}(k)>0,
 \label{eq:int_val_sign}
 \end{equation}
 for $0\leq k\leq n$. 
 
 Consequently, the signs of the values $B_{n}^{(n)}(k)$ alternate for $0\leq k\leq n$ and hence there has to be at least one root of $B_{n}^{(n)}$ in each interval $(k-1,k)$ for $1\leq k\leq n$. Since the polynomial $B_{n}^{(n)}$ is of
 degree $n$, there has to be exactly one zero in each interval $(k-1,k)$, for $1\leq k\leq n$. Thus, $B_{n}^{(n)}$ has $n$ distinct roots
 located in the intervals $(k-1,k)$, $1\leq k\leq n$. These roots are necessarily simple.
 
 The symmetry of the distribution of the roots around $n/2$ follows readily from the identity~\eqref{eq:symmetry_id_genBP} which implies
 \begin{equation}
   B_{n}^{(n)}\left(\frac{n}{2}-x\right)=(-1)^{n}B_{n}^{(n)}\left(\frac{n}{2}+x\right)\!,
 \label{eq:symmetry_ber_sec}
 \end{equation}
 for $n\in\N_{0}$.
\end{proof}

 According to the Gauss--Lucas theorem, the zeros of 
 \[ 
  B_{k}^{(n)}(x)=\frac{k!}{n!}\frac{\dd^{n-k}}{\dd x^{n-k}}B_{n}^{(n)}(x),
 \]
 where $0\leq k\leq n$, are located in the convex hull of the zeros of $B_{n}^{(n)}$ which is, by Theorem~\ref{thm:Ber_sec_zer_real}, a subset of the interval~$(0,n)$.
 
\begin{cor}
 For all $n\in\N_{0}$ and $0\leq k\leq n$, the zeros of $B_{k}^{(n)}$ are located in $(0,n)$.
\end{cor}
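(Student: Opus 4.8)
The plan is to exploit the fact that, up to a nonzero multiplicative constant, the polynomial $B_{k}^{(n)}$ is an iterated derivative of $B_{n}^{(n)}$, and then to transfer the localization of the zeros of $B_{n}^{(n)}$ established in Theorem~\ref{thm:Ber_sec_zer_real} to these derivatives. The required relation
\[
 B_{k}^{(n)}(x)=\frac{k!}{n!}\frac{\dd^{n-k}}{\dd x^{n-k}}B_{n}^{(n)}(x)
\]
is exactly the one recorded just above the statement; it follows by applying the differentiation identity in~\eqref{eq:ber_a_ident1} a total of $n-k$ times, each application lowering the lower index by one while the order~$n$ stays fixed. Since the constant $k!/n!$ is nonzero, the zeros of $B_{k}^{(n)}$ coincide with the zeros of the $(n-k)$-th derivative of $B_{n}^{(n)}$.

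Next I would invoke the Gauss--Lucas theorem, which asserts that the zeros of the derivative of any polynomial lie in the convex hull of its zeros. Applying this to each of the $n-k$ successive differentiations confines the zeros of $B_{k}^{(n)}$ to the convex hull of the zeros of $B_{n}^{(n)}$. By Theorem~\ref{thm:Ber_sec_zer_real}, the zeros of $B_{n}^{(n)}$ are all real and satisfy $0<x_{1}^{(n)}<x_{2}^{(n)}<\dots<x_{n}^{(n)}<n$, so their convex hull is the closed interval $[x_{1}^{(n)},x_{n}^{(n)}]$. The strict bounds $x_{1}^{(n)}>0$ and $x_{n}^{(n)}<n$ from~\eqref{eq:loc_zeros_first} place this interval inside the open interval $(0,n)$, whence every zero of $B_{k}^{(n)}$ lies in $(0,n)$, as claimed.

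No genuine obstruction arises here; the proof is essentially a single application of Gauss--Lucas combined with Theorem~\ref{thm:Ber_sec_zer_real}. The only points deserving attention are the bookkeeping ones: confirming that the constant does not vanish, so that differentiation neither creates nor destroys relevant zeros, and noting that it is precisely the strict inequalities of~\eqref{eq:loc_zeros_first} that upgrade the conclusion from the closed interval $[0,n]$ to the open one. As an alternative to Gauss--Lucas, one could invoke Rolle's theorem directly: since $B_{n}^{(n)}$ has only real zeros, between any two consecutive ones lies a zero of its derivative, and iterating this interlacing confines the zeros of every derivative to $(x_{1}^{(n)},x_{n}^{(n)})\subset(0,n)$.
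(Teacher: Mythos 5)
Your proof is correct and follows exactly the paper's own argument: the identity $B_{k}^{(n)}(x)=\frac{k!}{n!}\frac{\dd^{n-k}}{\dd x^{n-k}}B_{n}^{(n)}(x)$ combined with the Gauss--Lucas theorem and the localization~\eqref{eq:loc_zeros_first} from Theorem~\ref{thm:Ber_sec_zer_real}. Your additional remarks (the nonvanishing constant, the Rolle's-theorem alternative) are fine but not needed.
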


\subsection{The asymptotic expansion of $B_{n}^{(n)}(z)$ and small and large zeros}

First, we derive a complete locally uniform asymptotic expansion of~$B_{n}^{(n)}$ in negative powers of $\log n$. As an application, this expansion allows us to derive asymptotic formulas for the zeros of~$B_{n}^{(n)}$ that are located in a fixed distance from the origin or the point~$n$, for $n$ large.

In the proof of the asymptotic expansion of~$B_{n}^{(n)}$, we will make use of a particular case of Watson's lemma given below; for the more general version and its proof, see, e.g., \cite[Chp.~3, Thm.~3.1]{olver_97} or \cite[Sec.~I.5]{wong01}.

\begin{lem}[Watson]\label{lem:Watson}
 Let $f(u)$ be a function of positive variable $u$, such that
 \begin{equation}
  f(u)\sim\sum_{m=0}^{\infty}a_{m}u^{m+\lambda-1}, \quad \mbox{ as }u\to0+,
 \label{eq:f_asympt_watson}
 \end{equation}
 where $\lambda>0$. Then one has the complete asymptotic expansion
 \begin{equation}
  \int_{0}^{\infty}e^{-xu}f(u)\dd u\sim\sum_{m=0}^{\infty}\Gamma\!\left(m+\lambda\right)\frac{a_{m}}{x^{m+\lambda}}, \quad \mbox{ as }x\to\infty,
 \label{eq:int_asympt_watson}
 \end{equation}
 provided that the integral converges absolutely for all sufficiently large $x$.
\end{lem}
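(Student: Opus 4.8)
The plan is to reduce everything to the one exact evaluation that produces the Gamma factors, namely $\int_{0}^{\infty}e^{-xu}u^{m+\lambda-1}\,\dd u=\Gamma(m+\lambda)/x^{m+\lambda}$, which already dictates the shape of the claimed expansion~\eqref{eq:int_asympt_watson}. Accordingly I would split the integral at a fixed small $\delta>0$, replace $f$ near the origin by a truncation of~\eqref{eq:f_asympt_watson}, and then show that the resulting remainder near $0$ and the entire tail beyond $\delta$ are both negligible to the required order.

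First I would fix $N\in\N$ and use the hypothesis~\eqref{eq:f_asympt_watson} to select $\delta>0$ and a constant $C_{N}>0$ so that the remainder $R_{N}(u):=f(u)-\sum_{m=0}^{N-1}a_{m}u^{m+\lambda-1}$ satisfies the pointwise bound $|R_{N}(u)|\le C_{N}u^{N+\lambda-1}$ on $(0,\delta]$ (this is exactly what the Poincar\'e relation $\sim$ provides, using one extra term of the expansion). Writing $\int_{0}^{\infty}=\int_{0}^{\delta}+\int_{\delta}^{\infty}$ and inserting the decomposition $f=\sum_{m=0}^{N-1}a_{m}u^{m+\lambda-1}+R_{N}$ on $[0,\delta]$, the main contribution is carried by the model integrals $\int_{0}^{\delta}e^{-xu}u^{m+\lambda-1}\,\dd u$. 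For each of these I would restore the full range by $\int_{0}^{\delta}=\int_{0}^{\infty}-\int_{\delta}^{\infty}$: the completed integral equals $\Gamma(m+\lambda)/x^{m+\lambda}$ exactly, while the discarded piece $\int_{\delta}^{\infty}e^{-xu}u^{m+\lambda-1}\,\dd u$ is exponentially small in $x$ and hence beyond all orders. The remainder is then controlled by $\left|\int_{0}^{\delta}e^{-xu}R_{N}(u)\,\dd u\right|\le C_{N}\int_{0}^{\infty}e^{-xu}u^{N+\lambda-1}\,\dd u=C_{N}\,\Gamma(N+\lambda)\,x^{-(N+\lambda)}=O\!\left(x^{-(N+\lambda)}\right)$, precisely one order below the last retained term.

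The one place where the hypotheses must genuinely be invoked — and what I expect to be the main obstacle — is the tail $\int_{\delta}^{\infty}e^{-xu}f(u)\,\dd u$, because there $f$ is not controlled pointwise but only assumed absolutely integrable against $e^{-x_{0}u}$ for some large $x_{0}$. I would handle this by fixing such an $x_{0}$ and, for $x>x_{0}$, factoring $e^{-xu}=e^{-(x-x_{0})u}e^{-x_{0}u}$ and bounding $e^{-(x-x_{0})u}\le e^{-(x-x_{0})\delta}$ on $[\delta,\infty)$, whence $\left|\int_{\delta}^{\infty}e^{-xu}f(u)\,\dd u\right|\le e^{-(x-x_{0})\delta}\int_{\delta}^{\infty}e^{-x_{0}u}|f(u)|\,\dd u$, which is exponentially small as $x\to\infty$ and therefore dominated by every negative power of $x$.

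Collecting the pieces, for each fixed $N$ one obtains $\int_{0}^{\infty}e^{-xu}f(u)\,\dd u=\sum_{m=0}^{N-1}\Gamma(m+\lambda)\,a_{m}\,x^{-(m+\lambda)}+O\!\left(x^{-(N+\lambda)}\right)$ as $x\to\infty$. Since $N\in\N$ is arbitrary, this is exactly the assertion that the integral admits the complete asymptotic expansion~\eqref{eq:int_asympt_watson}.
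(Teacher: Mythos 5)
Your proof is correct and is precisely the classical argument for Watson's lemma — truncate the expansion near the origin, complete the model integrals to exact Gamma values, and show that both the truncation error and the tail (via the factorization $e^{-xu}=e^{-(x-x_0)u}e^{-x_0u}$ and absolute convergence at $x_0$) are negligible. The paper itself does not prove the lemma but cites \cite[Chp.~3, Thm.~3.1]{olver_97} and \cite[Sec.~I.5]{wong01}, and the proof given there is essentially the one you wrote, so there is nothing to add.
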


\begin{rem}\label{rem:uniform_watson}
 If additionally the coefficients $a_{m}=a_{m}(\xi)$ in~\eqref{eq:f_asympt_watson} depend continuously on a parameter $\xi\in K$ where $K$ is a compact subset of $\C$ and the asymptotic expansion~\eqref{eq:f_asympt_watson} is uniform in $\xi\in K$, then the expansion~\eqref{eq:int_asympt_watson} holds uniformly in $\xi\in K$ provided that the integral converges uniformly in $\xi\in K$ for all sufficiently large~$x$. This variant of Watson's lemma can be easily verified by a slight modification of the proof given, for example, in~\cite[Chp.~3, Thm.~3.1]{olver_97}.
\end{rem}

Yet another auxiliary statement, this time an inequality for the Gamma function with a complex argument, will be needed to obtain the desired asymptotic expansion of $B_{n}^{(n)}(z)$ for $n\to\infty$.

\begin{lem}\label{lem:gamma_ineq}
 For all $s\in[0,1]$ and $z\in\C$ such that $\Re z>1$, it holds
 \[
  \left|z^{s}\frac{\Gamma(z-s)}{\Gamma(z)}-1\right|\leq\frac{2s}{\Re z -1}\left|\frac{z+1}{z-s}\right|.
 \]
\end{lem}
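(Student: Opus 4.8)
The plan is to introduce $g(s):=z^{s}\Gamma(z-s)/\Gamma(z)$, which satisfies $g(0)=1$, and to control $g(s)-1$ through the logarithmic derivative. Writing $\psi=\Gamma'/\Gamma$ for the digamma function and differentiating $\log g$ in $s$ gives $(\log g)'(s)=\log z-\psi(z-s)$, so that $\log g(s)=\int_{0}^{s}\bigl(\log z-\psi(z-u)\bigr)\,\dd u$ along principal branches (no branch ambiguity arises since $\Re(z-u)\geq\Re z-1>0$ throughout). I would then insert the Binet-type representation $\log w-\psi(w)=\int_{0}^{\infty}k(t)e^{-wt}\,\dd t$, valid for $\Re w>0$, where $k(t):=\frac{1}{1-e^{-t}}-\frac{1}{t}$, and interchange the order of integration (justified by absolute convergence) to obtain
\[
 \log g(s)=\int_{0}^{\infty}\frac{k(t)}{t}\bigl(e^{-(z-s)t}-e^{-zt}\bigr)\,\dd t.
\]

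The decisive step is a reduction to a real argument. Since $e^{-(z-s)t}-e^{-zt}=e^{-zt}(e^{st}-1)$ and $k(t)\geq 0$, taking absolute values and writing $x:=\Re z$ yields $|\log g(s)|\leq\int_{0}^{\infty}\frac{k(t)}{t}e^{-xt}(e^{st}-1)\,\dd t=\log g_{\R}(s)$, where $g_{\R}(s):=x^{s}\Gamma(x-s)/\Gamma(x)$ is precisely $g(s)$ with $z$ replaced by $x$ (and $g_{\R}(s)\geq 1$). Combining this with the elementary bound $|e^{L}-1|\leq e^{|L|}-1$ for $L\in\C$, I obtain $|g(s)-1|\leq e^{|\log g(s)|}-1\leq g_{\R}(s)-1$, which eliminates all dependence on the imaginary part of $z$.

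It then remains to estimate the real quantity $g_{\R}(s)-1$. Here I would use the two one-line inequalities $0<k(t)<1$ for $t>0$ (equivalently $1-e^{-t}<t<e^{t}-1$) together with Frullani's integral $\int_{0}^{\infty}t^{-1}\bigl(e^{-(x-s)t}-e^{-xt}\bigr)\,\dd t=\log\frac{x}{x-s}$ to conclude $\log g_{\R}(s)<\log\frac{x}{x-s}$, hence $g_{\R}(s)-1<\frac{s}{x-s}$. A short computation gives $\frac{s}{x-s}\leq\frac{2s}{x-1}$ (this is $2s-1\leq x$, true since $x>1\geq 2s-1$). Finally, to match the stated right-hand side I would invoke $\bigl|\frac{z+1}{z-s}\bigr|\geq 1$, which follows from $|z+1|^{2}-|z-s|^{2}=(1+s)(2\Re z+1-s)>0$; chaining these yields $|g(s)-1|\leq\frac{2s}{x-1}\leq\frac{2s}{x-1}\bigl|\frac{z+1}{z-s}\bigr|$.

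The only genuinely delicate points are establishing the representation for $\log w-\psi(w)$ and justifying the interchange of integrations; everything after the reduction to $x=\Re z$ is elementary. I note that this route actually proves the slightly stronger bound $|g(s)-1|\leq\frac{s}{\Re z-s}$, so the factor $\bigl|\frac{z+1}{z-s}\bigr|$ is not essential for the argument and is absorbed only through $\bigl|\frac{z+1}{z-s}\bigr|\geq 1$; a more direct manipulation of the Beta-type integral $\tfrac{\Gamma(z-s)}{\Gamma(z)}=\tfrac{1}{\Gamma(s)}\int_{0}^{\infty}e^{-(z-s)r}(1-e^{-r})^{s-1}\,\dd r$ may instead produce that factor organically.
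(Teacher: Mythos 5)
Your proof is correct, and it takes a genuinely different route from the paper's. The paper works with $f_{z}(s)=z^{s}\Gamma(z-s)$ and the mean value inequality $|f_{z}(s)-f_{z}(0)|\leq|f_{z}'(s^{*})|\,s$; it then bounds $|\log z-\psi(z-s)|\leq 2/(\Re z-1)$ by splitting it as $|\log z-\log(z-s)|+|\log(z-s)-\psi(z-s)|$ (the second term estimated from the same digamma representation you invoke, with the same kernel bound $0<k(t)<1$), and finally it applies the Gamma-ratio inequality $|\Gamma(z+a)/\Gamma(z+b)|\leq|z|^{a-b}$ (DLMF 5.6.8) with $a=1-s$, $b=2$ to dominate $|z^{s^{*}}\Gamma(z-s^{*})|/|\Gamma(z)|$ by $\left|\frac{z+1}{z-s}\right|$ --- that ratio inequality is the sole source of this factor in the statement. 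You instead integrate the logarithmic derivative exactly, interchange the integrals to get a single kernel representation of $\log g(s)$, and use positivity of $k$ to reduce to the real point $x=\Re z$, after which Frullani's integral yields the cleaner and stronger bound $|g(s)-1|\leq s/(\Re z-s)$; the factor $\left|\frac{z+1}{z-s}\right|$ then enters only through the trivial observation that it is $\geq 1$. Your route costs a Fubini justification (easy, by absolute convergence, since $0<k(t)<1$ and $\Re z-s\geq\Re z-1>0$), but it buys a sharper estimate and dispenses with both the mean value theorem and DLMF 5.6.8; it also makes transparent that the precise shape of the lemma's right-hand side is an artifact of the paper's method rather than anything intrinsic. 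The one ingredient the two arguments share is the digamma integral representation (DLMF 5.9.13), which is the central analytic input in both.
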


\begin{proof}
 Let $f_{z}(s):=z^{s}\Gamma(z-s)$, $\Re z>1$, and $s\in[0,1]$. Then, by the Lagrange theorem, 
 \begin{equation}
  |f_{z}(s)-f_{z}(0)|\leq|f_{z}'(s^{*})|s,
 \label{eq:dif_f_Lagrange}
 \end{equation}
 for some $s^{*}\in(0,s)$. The differentiation of~$f_{z}(s)$ with respect to~$s$ yields
 \begin{equation}
  f_{z}'(s)=z^{s}\Gamma(z-s)\left(\log z -\psi(z-s)\right)\!,
 \label{eq:der_f_z}
 \end{equation}
 where $\psi=\Gamma'/\Gamma$ is the Digamma function. Recall that \cite[Eq.~5.9.13]{dlmf}
 \[
  \psi(z)=\log z + \int_{0}^{\infty}\left(\frac{1}{t}-\frac{1}{1-e^{-t}}\right)e^{-t z}\dd t,
 \]
 for $\Re z>0$. From the above formula, one deduces that, for $\Re z>1$ and $s\in[0,1]$,
 \[
  |\psi(z-s)-\log(z-s)|\leq\int_{0}^{\infty}e^{-t(\Re z-1)}\dd t=\frac{1}{\Re z-1},
 \]
 where we used that
 \[
  \left|\frac{1}{t}-\frac{1}{1-e^{-t}}\right|<1, \quad \forall t>0.
 \]
 Further, one has
 \[
  |\log(z-s)-\log z|\leq\sum_{n=1}^{\infty}\frac{1}{n}\frac{s^{n}}{|z|^{n}}\leq\frac{1}{|z|-1}\leq\frac{1}{\Re z -1},
 \]
 for $\Re z>1$ and $s\in[0,1]$. Consequently, we get from~\eqref{eq:der_f_z} the estimate
 \[
  |f_{z}'(s)|\leq\frac{2}{\Re z-1}\left|z^{s}\Gamma(z-s)\right|,
 \]
 for all $s\in[0,1]$ and $\Re z>1$.
 
 Recalling~\eqref{eq:dif_f_Lagrange}, the statement is proven, if we show that
 \[
 \left|\frac{z^{s}\,\Gamma(z-s)}{\Gamma(z)}\right|\leq\left|\frac{z+1}{z-s}\right|,
 \]
 for $s\in[0,1]$ and $\Re z>1$. To this end, we apply the inequality~\cite[Eq.~5.6.8]{dlmf}
 \begin{equation}
 \left|\frac{\Gamma(z+a)}{\Gamma(z+b)}\right|\leq\frac{1}{|z|^{b-a}},
 \label{eq:gamma_ratio_ineq}
 \end{equation}
 which holds true provided that $b-a\geq1$, $a\geq0$, and $\Re z>0$. Then
 \[
 \left|\frac{\Gamma(z-s)}{\Gamma(z)}\right|=\left|\frac{z(z+1)}{z-s}\right|\left|
 \frac{\Gamma(z+1-s)}{\Gamma(z+2)}\right|\leq\frac{1}{|z|^{s}}\left|\frac{z+1}{z-s}\right|,
 \]
 where we applied~\eqref{eq:gamma_ratio_ineq} with $a=1-s$ and $b=2$.
\end{proof}

Now, we are at the position to deduce the complete asymptotic expansion of the polynomials $B_{n}^{(n)}(z)$ for $n\to\infty$.

\begin{thm}\label{thm:Ber_sec_compl_asympt}
 The asymptotic expansion
 \begin{equation}
  (-1)^{n}\frac{n^{z}}{n!}B_{n}^{(n)}(z)\sim\sum_{k=0}^{\infty}\frac{c_{k}(z)}{\log^{k+1}n}, \quad n\to\infty,
 \label{eq:asympt_ber_sec}
 \end{equation}
 holds locally uniformly in $z\in\C$, where
 \begin{equation}
  c_{k}(z)=\frac{\dd^{k}}{\dd z^{k}}\frac{1}{\Gamma(1-z)}.
 \label{eq:coeff_c_asympt}
 \end{equation}
\end{thm}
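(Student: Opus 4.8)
The plan is to start from the elementary integral representation \eqref{eq:int_repre_simple} and recast it as a Laplace-type integral with large parameter $\log n$, to which Watson's lemma applies. First I would express the product of linear factors through the Gamma function: since $\prod_{k=1}^{n}(z+y-k)=(-1)^{n}\Gamma(n+1-z-y)/\Gamma(1-z-y)$, multiplying \eqref{eq:int_repre_simple} by $(-1)^{n}n^{z}/n!$ and using $n!=\Gamma(n+1)$ gives
\[
(-1)^{n}\frac{n^{z}}{n!}B_{n}^{(n)}(z)=\int_{0}^{1}\frac{1}{\Gamma(1-z-y)}\,H_{n}(y,z)\,\dd y,\qquad H_{n}(y,z):=\frac{n^{z}\,\Gamma(n+1-z-y)}{\Gamma(n+1)}.
\]
The heuristic is that $H_{n}(y,z)\approx n^{-y}=e^{-y\log n}$, so that the integral is asymptotically $\int_{0}^{1}g(z+y)e^{-y\log n}\,\dd y$ with $g(w):=1/\Gamma(1-w)$; since $g$ is entire, its Taylor expansion $g(z+y)=\sum_{k\geq0}(c_{k}(z)/k!)\,y^{k}$ has precisely the coefficients $c_{k}(z)=g^{(k)}(z)$ recorded in \eqref{eq:coeff_c_asympt}.

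The technical heart, and what I expect to be the main obstacle, is to justify that $H_{n}(y,z)=n^{-y}\bigl(1+O(1/n)\bigr)$ uniformly for $y\in[0,1]$ and $z$ in a compact set $K\subset\C$. I would factor
\[
H_{n}(y,z)=\left(\frac{n^{z}\,\Gamma(n+1-z)}{\Gamma(n+1)}\right)\cdot\frac{\Gamma(n+1-z-y)}{\Gamma(n+1-z)}.
\]
For $n$ large the first factor tends to $1$ by the standard ratio asymptotics $\Gamma(n+1-z)/\Gamma(n+1)\sim(n+1)^{-z}$, uniformly on $K$. For the second factor I would apply Lemma~\ref{lem:gamma_ineq} with $Z=n+1-z$ (so that $\Re Z>1$ once $n$ exceeds $\max_{z\in K}\Re z$) and $s=y\in[0,1]$, obtaining $\Gamma(n+1-z-y)/\Gamma(n+1-z)=(n+1-z)^{-y}\bigl(1+O(y/n)\bigr)$; finally $(n+1-z)^{-y}=n^{-y}\bigl(1+O(1/n)\bigr)$ uniformly because $y$ is bounded. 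Collecting these estimates yields the claimed uniform control of $H_{n}$, and this is exactly the point for which the delicate inequality of Lemma~\ref{lem:gamma_ineq} was prepared.

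With this in hand the conclusion follows. I would split the integral into its main part $\int_{0}^{1}g(z+y)\,n^{-y}\,\dd y$ and a remainder bounded by $(C/n)\int_{0}^{1}|g(z+y)|\,n^{-y}\,\dd y=O\bigl(1/(n\log n)\bigr)$, which lies beyond all orders of the expansion in powers of $1/\log n$. To the main part I would apply Watson's lemma (Lemma~\ref{lem:Watson}) with $x=\log n$, $\lambda=1$, and $a_{m}=c_{m}(z)/m!$; although the lemma is stated on $(0,\infty)$, the integrand is continuous on $[0,1]$, so extending $g(z+\cdot)$ by zero beyond $y=1$ alters the integral by an exponentially small amount and leaves the small-$y$ expansion unchanged. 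Since $\Gamma(m+1)a_{m}=c_{m}(z)$, this produces exactly $\sum_{k\geq0}c_{k}(z)/\log^{k+1}n$. Finally, the local uniformity in $z$ is obtained from the uniform variant of Watson's lemma (Remark~\ref{rem:uniform_watson}), using that the $c_{m}(z)$ are entire functions of $z$ and that the Taylor expansion of $g(z+y)$ at $y=0$ is uniform for $z\in K$.
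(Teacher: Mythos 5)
Your proposal is correct and takes essentially the same approach as the paper's proof: the same integral representation \eqref{eq:int_repre_simple} rewritten as a Gamma-function ratio, Lemma~\ref{lem:gamma_ineq} to replace that ratio by the exponential $n^{-y}$ up to an $O(1/n)$ error lying beyond all orders in $1/\log n$, then Watson's lemma with large parameter $\log n$ together with its uniform variant (Remark~\ref{rem:uniform_watson}) and Stirling's formula. The only cosmetic difference is that you normalize by $n^{z}/n!$ at the outset and factor off the Stirling ratio $n^{z}\Gamma(n+1-z)/\Gamma(n+1)$, whereas the paper divides by $\Gamma(1-z+n)$ and invokes Stirling only in the final step.
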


\begin{proof}
 The integral formula~\eqref{eq:int_repre_simple} can be rewritten in terms of the Gamma functions as
 \[
  B_{n}^{(n)}(z)=(-1)^{n}\int_{0}^{1}\frac{\Gamma(1-s-z+n)}{\Gamma(1-s-z)}\dd s.
 \]
 Using this together with Lemma~\ref{lem:gamma_ineq}, we obtain 
 \begin{align*}
  &\left|\frac{B_{n}^{(n)}(z)}{\Gamma(1-z+n)}-(-1)^{n}\int_{0}^{1}\frac{\dd s}{(1-z+n)^{s}\Gamma(1-s-z)}\right|\\
  &\leq\frac{2|n+2-z|}{n-\Re z}\int_{0}^{1}\frac{s\dd s}{|n+1-z-s||(1-z+n)^{s}\Gamma(1-s-z)|},
 \end{align*}
 for $\Re z<n$. Since $1/\Gamma$ is an entire function, for $K\subset\C$ a compact set, there is a constant $C>0$ such that
 \[
  \sup_{s\in[0,1]}\sup_{z\in K}\frac{1}{|\Gamma(1-z-s)|}\leq C.
 \]
 Moreover, one has 
 \[
  \left|\frac{n+2-z}{n+1-z-s}\right|\leq\frac{n+2+|z|}{n-|z|},
 \] 
 for $s\in[0,1]$ and $|z|<n$.
 Hence we have the estimate
 \[
  \left|\frac{B_{n}^{(n)}(z)}{\Gamma(1-z+n)}-(-1)^{n}\int_{0}^{1}\frac{\dd s}{(1-z+n)^{s}\Gamma(1-s-z)}\right|\leq\frac{2C}{n-\Re z}\frac{n+2+|z|}{n-|z|}\int_{0}^{1}\frac{s\dd s}{|1-z+n|^{s}},
 \]
 provided that $|z|<n$. Moreover, since
 \[
 \int_{0}^{1}\frac{s\dd s}{|1-z+n|^{s}}=\frac{|1-z+n|-\log|1-z+n|-1}{|1-z+n|\log^{2}|1-z+n|}\leq\frac{1}{\log^{2}|1-z+n|},
 \]
 we conclude that
 \begin{align}
  \frac{B_{n}^{(n)}(z)}{\Gamma(1-z+n)}&=(-1)^{n}\int_{0}^{1}\frac{\dd s}{(1-z+n)^{s}\Gamma(1-s-z)}+O\left(\frac{1}{n\log^{2}n}\right)\nonumber\\
  &=(-1)^{n}\int_{0}^{1}\frac{\dd s}{n^{s}\Gamma(1-s-z)}+O\left(\frac{1}{n}\right)\!,\label{eq:towards_asympt_exp_inproof0}
 \end{align}
 as $n\to\infty$, locally uniformly in $z\in\C$.
 
 The analyticity of the reciprocal Gamma function implies
 \[
  \frac{1}{\Gamma(1-s-z)}=\sum_{k=0}^{\infty}c_{k}(z)\frac{s^{k}}{k!},
 \]
 where
 \[
  c_{k}(z)=\frac{\dd^{k}}{\dd s^{k}}\bigg|_{s=0}\frac{1}{\Gamma(1-s-z)}=\frac{\dd^{k}}{\dd z^{k}}\frac{1}{\Gamma(1-z)}.
 \]
 Moreover, $c_{k}$ is an entire function for any $k\in\N_{0}$. Consequently, if $\chi_{(0,1)}$ denotes the indicator function of the interval $(0,1)$, then
 \[
  f_{z}(s):=\frac{\chi_{(0,1)}(s)}{\Gamma(1-s-z)}\sim\sum_{k=0}^{\infty}c_{k}(z)\frac{s^{k}}{k!}, \quad s\to0+,
 \]
 and the application of Lemma~\ref{lem:Watson} yields
 \begin{equation}
  \int_{0}^{1}\frac{\dd s}{n^{s}\Gamma(1-s-z)}=\int_{0}^{\infty}e^{-s\log n}f_{z}(s)\dd s\sim\sum_{k=0}^{\infty}\frac{c_{k}(z)}{\log^{k+1}n}, \quad n\to\infty.
 \label{eq:asympt_int_log_watson}
 \end{equation}
 This asymptotic formula is again local uniform in $z\in\C$. To see this, one may proceed as follows. For $m\in\N_{0}$, we have
 \[
  \left|\frac{1}{\Gamma(1-s-z)}-\sum_{k=0}^{m}c_{k}(z)\frac{s^{k}}{k!}\right|\leq\left|\frac{\dd^{m+1}}{\dd s^{m+1}}\bigg|_{s=s^{*}}\frac{1}{\Gamma(1-s-z)}\right|\frac{s^{m+1}}{(m+1)!}=|c_{m+1}(z+s^{*})|\frac{s^{m+1}}{(m+1)!},
 \]
 where $s^{*}\in(0,s)$. Now, if $K\subset\C$ is compact, then, by the analyticity of $c_{m+1}$, there is a constant $C'>0$ such that
 \[
  \sup_{s\in[0,1]}\sup_{z\in K}|c_{m+1}(z+s)|\leq C'.
 \]
 Consequently, the error term in the expansion~\eqref{eq:asympt_int_log_watson} is majorized by
 \[
  \frac{C'}{(m+1)!}\int_{0}^{\infty}\frac{s^{m+1}}{n^{s}}\dd s=\frac{C'}{\log^{m+2} n},
 \]
 for all $z\in K$; cf. also Remark~\ref{rem:uniform_watson}. Thus, \eqref{eq:towards_asympt_exp_inproof0} together with~\eqref{eq:asympt_int_log_watson} imply that
 \begin{equation}
  \frac{B_{n}^{(n)}(z)}{\Gamma(1-z+n)}\sim(-1)^{n}\sum_{k=0}^{\infty}\frac{c_{k}(z)}{\log^{k+1}n}, \quad n\to\infty,
 \label{eq:asympt_ber_sec_inproof}
 \end{equation}
 locally uniformly in $z\in\C$.
 
 Finally, it follows from the well-known Stirling asymptotic expansion that
\[
 \frac{1}{\Gamma(1-z+n)}=\frac{n^{z}}{n!}\left(1+O\left(\frac{1}{n}\right)\right), \quad n\to\infty,
\]
locally uniformly in $z\in\C$. By applying the above formula in~\eqref{eq:asympt_ber_sec_inproof}, we arrive at the statement.
\end{proof}

\begin{rem}
Using different arguments based on a contour integration, a more general result than~\eqref{eq:asympt_ber_sec} was already obtained by N{\" o}rlund in~\cite{norlund_rcmp61} (in French). Nemes derived a complete asymptotic expansion of the Bernoulli numbers of the second kind $B_{n}^{(n)}(1)/n!$ in~\cite{nemes_jis11} which is a particular case of Theorem~\ref{thm:Ber_sec_compl_asympt} for $z=1$. Another asymptotic formula for the Bernoulli numbers of the second kind was deduced by Van Veen in~\cite{vanveen_51}.
\end{rem}

By using Theorem~\ref{thm:Ber_sec_compl_asympt}, we immediately get the following corollary,
where the second statement follows from the Hurwitz theorem, see~\cite[Thm.~2.5, p.~152]{conway_78}, and 
the fact that the zeros of $1/\Gamma$ are located at non-positive integers and are simple.

\begin{cor}\label{cor:hurwitz_small_zer}
 It holds
 \[
  \lim_{n\to\infty}(-1)^{n}\frac{n^{z}\log n}{n!}B_{n}^{(n)}(z)=\frac{1}{\Gamma(1-z)}
 \]
 uniformly in compact subsets of $\C$. Consequently, for $k\in\N$, one has
 \[
  \lim_{n\to\infty}x_{k}^{(n)}=k.
 \]
\end{cor}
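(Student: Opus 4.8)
The plan is to obtain the limit formula as the leading-order consequence of Theorem~\ref{thm:Ber_sec_compl_asympt} and then to translate it into the convergence of the zeros by means of Hurwitz's theorem, using the localization~\eqref{eq:loc_zeros_first} to identify the correct index.

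For the first assertion I would simply truncate the expansion~\eqref{eq:asympt_ber_sec} after its leading term. Since $c_{0}(z)=1/\Gamma(1-z)$ by~\eqref{eq:coeff_c_asympt}, Theorem~\ref{thm:Ber_sec_compl_asympt} yields
\[
 (-1)^{n}\frac{n^{z}}{n!}B_{n}^{(n)}(z)=\frac{1}{\Gamma(1-z)}\frac{1}{\log n}+O\!\left(\frac{1}{\log^{2}n}\right),\quad n\to\infty,
\]
locally uniformly in $z$. Multiplying by $\log n$ and letting $n\to\infty$, the remainder becomes $O(1/\log n)$ uniformly on every compact set, and the first claim follows at once.

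For the zeros, set $g_{n}(z):=(-1)^{n}(n^{z}\log n/n!)\,B_{n}^{(n)}(z)$. The prefactor $(-1)^{n}n^{z}\log n/n!$ is entire and nowhere vanishing (for $n\ge2$), so $g_{n}$ and $B_{n}^{(n)}$ have exactly the same zeros, and by the first part $g_{n}\to g:=1/\Gamma(1-z)$ locally uniformly. The limit $g$ is entire, not identically zero, and its zeros are precisely the positive integers, each simple. Fixing $k\in\N$ and $\varepsilon\in(0,1/2)$, Hurwitz's theorem applied on the disk $D(k,\varepsilon)$, on whose boundary $g$ does not vanish, shows that for all large $n$ the polynomial $B_{n}^{(n)}$ has exactly one zero there; being real by Theorem~\ref{thm:Ber_sec_zer_real}, it lies in $(k-\varepsilon,k+\varepsilon)$.

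It remains to check that this zero is $x_{k}^{(n)}$ and not $x_{k+1}^{(n)}$, which I expect to be the only genuinely delicate point: both $x_{k}^{(n)}\in(k-1,k)$ and $x_{k+1}^{(n)}\in(k,k+1)$ are a priori candidates, so I must rule out that the $k$-th zero accumulates at $k-1$ from the right. I would settle this by a counting argument. The function $g$ has exactly $k$ zeros, namely $1,\dots,k$, in the disk $D(0,k+1/2)$, so by Hurwitz the same is true of $B_{n}^{(n)}$ for $n$ large. As $x_{1}^{(n)}<\dots<x_{k}^{(n)}$ already furnish $k$ zeros, all smaller than $k<k+1/2$ by~\eqref{eq:loc_zeros_first}, these account for all of them; hence $x_{k+1}^{(n)}\ge k+1/2>k+\varepsilon$ lies outside $D(k,\varepsilon)$. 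The unique zero in $D(k,\varepsilon)$ must therefore be $x_{k}^{(n)}$, so $|x_{k}^{(n)}-k|<\varepsilon$ for all large $n$; since $\varepsilon\in(0,1/2)$ was arbitrary, $x_{k}^{(n)}\to k$.
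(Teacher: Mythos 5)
Your proof is correct and takes essentially the same route as the paper: the limit formula is the leading term of the expansion in Theorem~\ref{thm:Ber_sec_compl_asympt}, and the convergence of zeros follows from Hurwitz's theorem together with the fact that the zeros of $z\mapsto 1/\Gamma(1-z)$ are the positive integers and are simple. Your counting argument in the disk $D(0,k+1/2)$, which uses~\eqref{eq:loc_zeros_first} to pin down the index and rule out $x_{k+1}^{(n)}$, carefully fills in a detail that the paper leaves implicit.
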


\begin{rem}
 Both sequences above converge quite slowly. The error terms turn out to decay as $1/\log n$, for $n\to\infty$.
\end{rem}

Since we known all the coefficients in the asymptotic expansion~\eqref{eq:asympt_ber_sec} by any power of $1/\log n$,
we can compute also the asymptotic expansions of zeros $x_{k}^{(n)}$ for any fixed $k\in\N$, as $n\to\infty$, to an arbitrary order, in principle. However, the coefficients by the powers of $1/\log n$ become quickly complicated and no closed formula for the coefficients was found. We provide the first three terms of the asymptotic expansions.

\begin{thm}\label{thm:asympt_zer_small}
 For $k\in\N$, we have the asymptotic expansion
 \[
  x_{k}^{(n)}=k-\frac{1}{\log n}-\frac{\psi(k)}{\log^{2}n}+O\left(\frac{1}{\log^{3}n}\right)\!, \quad \mbox{ as } n\to\infty,
 \]
 where $\psi=\Gamma'/\Gamma$ is the Digamma function.
\end{thm}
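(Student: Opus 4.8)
The plan is to locate the unique zero of $B_{n}^{(n)}$ near the positive integer $k$ by inserting the complete asymptotic expansion of Theorem~\ref{thm:Ber_sec_compl_asympt} into the zero equation and solving it order by order in the small parameter $\epsilon:=1/\log n$. Write $g(z):=1/\Gamma(1-z)$ and normalize
\[
 F_{n}(z):=(-1)^{n}\frac{n^{z}\log n}{n!}B_{n}^{(n)}(z),
\]
which has the same zeros as $B_{n}^{(n)}$. Multiplying \eqref{eq:asympt_ber_sec} by $\log n$ and recalling \eqref{eq:coeff_c_asympt}, which says precisely that $c_{j}=g^{(j)}$, I get the locally uniform expansion $F_{n}(z)\sim\sum_{j\geq0}g^{(j)}(z)\,\epsilon^{j}$. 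Since $g$ has a simple zero at $z=k$ (the reciprocal Gamma function vanishes simply at the non-positive integers) and since Corollary~\ref{cor:hurwitz_small_zer} already gives $x_{k}^{(n)}\to k$, I fix a small closed disk around $k$ containing no other integer, on which a Rouch\'e/Hurwitz argument shows that for large $n$ the only zero of $F_{n}$ is $x_{k}^{(n)}$, and on which the truncation $F_{n}(z)=g(z)+g'(z)\epsilon+g''(z)\epsilon^{2}+O(\epsilon^{3})$ holds uniformly.

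Next I would set $t_{n}:=x_{k}^{(n)}-k\to0$, substitute $z=k+t_{n}$ into $F_{n}(k+t_{n})=0$, and Taylor-expand each coefficient about $z=k$. A first pass, balancing the two lowest-order terms $g'(k)t_{n}+g'(k)\epsilon$ (note $c_{1}(k)=g'(k)=c_{0}'(k)$ and $g'(k)\neq0$), yields $t_{n}=-\epsilon+O(\epsilon^{2})$; in particular $t_{n}=O(\epsilon)$, so that all mixed terms $t_{n}^{2}$, $t_{n}\epsilon$ are $O(\epsilon^{2})$ and the cubic terms are absorbed into $O(\epsilon^{3})$. Writing $t_{n}=-\epsilon+\tau_{n}$ with $\tau_{n}=O(\epsilon^{2})$ and keeping all terms through order $\epsilon^{2}$, the order-$\epsilon$ contributions cancel (again because $c_{1}(k)=c_{0}'(k)$), and, using the identity $c_{0}''(k)=c_{1}'(k)=c_{2}(k)=g''(k)$ forced by $c_{j}=g^{(j)}$, the order-$\epsilon^{2}$ balance collapses to
\[
 g'(k)\,\tau_{n}+\tfrac12\,g''(k)\,\epsilon^{2}+O(\epsilon^{3})=0,
\]
whence $\tau_{n}=-\dfrac{g''(k)}{2g'(k)}\,\epsilon^{2}+O(\epsilon^{3})$.

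It then remains to evaluate the constant $-g''(k)/(2g'(k))$. Writing $g(z)=(z-k)h(z)$ with $h(k)=g'(k)\neq0$, one has $-g''(k)/(2g'(k))=-(\log h)'(k)=-\lim_{z\to k}\bigl(\psi(1-z)-\tfrac{1}{z-k}\bigr)$, since $(\log g)'(z)=\psi(1-z)$. The quantity inside the limit is the finite part of the Digamma function at its pole $1-z=-(k-1)$, which equals $-\gamma+\sum_{j=1}^{k-1}1/j=\psi(k)$; equivalently, one may read off the same value from the first two Taylor coefficients of $1/\Gamma$ at its simple zero. Thus $\tau_{n}=-\psi(k)\epsilon^{2}+O(\epsilon^{3})$ and
\[
 x_{k}^{(n)}=k+t_{n}=k-\frac{1}{\log n}-\frac{\psi(k)}{\log^{2}n}+O\!\left(\frac{1}{\log^{3}n}\right).
\]

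I expect the only genuinely delicate points to be bookkeeping ones. First, one must justify rigorously that the asymptotic series of Theorem~\ref{thm:Ber_sec_compl_asympt} may be truncated with a uniform $O(\epsilon^{3})$ remainder on a fixed neighborhood of $k$, and that the bootstrap produces a true two-term expansion of $t_{n}$ rather than mere $o(\epsilon)$ bounds (the Rouch\'e step and the successive refinement $t_{n}=O(\epsilon)$, then $t_{n}=-\epsilon+O(\epsilon^{2})$, then $t_{n}=-\epsilon-\psi(k)\epsilon^{2}+O(\epsilon^{3})$ take care of this). Second, the clean evaluation of the Gamma-function constant, namely recognizing that the relevant finite part of $\psi$ is exactly $\psi(k)$, is where the stated coefficient $-\psi(k)$ actually comes from; everything else is mechanical differentiation of $1/\Gamma(1-z)$.
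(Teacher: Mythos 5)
Your proposal is correct, and its skeleton---substituting the zero into the locally uniform expansion of Theorem~\ref{thm:Ber_sec_compl_asympt}, truncating, and solving order by order in $\epsilon=1/\log n$ via a two-step bootstrap---is exactly the paper's strategy; even your intermediate refinements ($t_{n}=O(\epsilon)$, then $t_{n}=-\epsilon+O(\epsilon^{2})$, then the second-order term) mirror the paper's substitutions $\epsilon_{n}^{(k)}=\mu_{n}^{(k)}/\log n$ and $\epsilon_{n}^{(k)}=1/\log n+\nu_{n}^{(k)}/\log^{2}n$ with bounded $\mu_{n}^{(k)},\nu_{n}^{(k)}$. The only genuine difference is the bookkeeping used to extract the coefficient $\psi(k)$. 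The paper expands $c_{j}(k-\epsilon_{n}^{(k)})$ through the identity $1/\Gamma(1-k+z)=(z+1-k)_{k}/\Gamma(z+1)$, so all Taylor coefficients are packaged as $X_{m}^{(k)}=\sum_{i}s(k,i)\pi_{m-i}$, built from Stirling numbers of the first kind and the Maclaurin coefficients of the reciprocal Gamma function, and $\psi(k)$ then emerges from the special values of $s(k,1)$, $s(k,2)$, $\pi_{0}$, $\pi_{1}$. You instead keep everything in terms of $g(z)=1/\Gamma(1-z)$, reduce the second-order balance to the ratio $-g''(k)/\bigl(2g'(k)\bigr)$, and evaluate it as the finite part of $\psi(1-z)$ at its pole, which is a correct computation (indeed $\psi(1-z)=\tfrac{1}{z-k}+\psi(k)+O(z-k)$ near $z=k$). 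Your route is shorter and more structural for the two terms required here---it explains \emph{why} the Digamma function appears, as a logarithmic derivative at the simple zero of $1/\Gamma(1-\cdot)$---while the paper's Stirling-number expansion is more mechanical but scales better if one wants further terms, since all the $X_{m}^{(k)}$ are available in closed form. Your handling of the analytic fine points (uniform truncation on a fixed disk around $k$, Hurwitz/Rouch\'e localization of the single zero, successive refinement rather than mere $o$-bounds) is sound and matches what the paper does implicitly, so I see no gap.
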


\begin{rem}\label{rem:asympt_zer_large}
 Theorem~\ref{thm:asympt_zer_small} gives the asymptotic behavior of zeros located in a fixed distance from $0$.
 As a consequence of the symmetry~\eqref{eq:symmetry_zeros}, we know also the asymptotic behavior of the zeros
 located in a fixed distance from $n$. Namely, 
 \[
  x_{n-k+1}^{(n)}=n-k+\frac{1}{\log n}+\frac{\psi(k)}{\log^{2}n}+O\left(\frac{1}{\log^{3}n}\right)\!, \quad \mbox{ as } n\to\infty,
 \]
 for any fixed $k\in\N$.
\end{rem}

\begin{proof}
 We fix $k\in\N$ and introduce $\epsilon_{n}^{(k)}:=k-x_{k}^{(n)}$. If we substitute for $z=k-\epsilon_{n}^{(k)}$ in~\eqref{eq:coeff_c_asympt}, we get
 \begin{equation}
 c_{j}\left(k-\epsilon_{n}^{(k)}\right)
 =(-1)^{j}\frac{\dd^{j}}{\dd z^{j}}\bigg|_{z=\epsilon_{n}^{(k)}}\frac{1}{\Gamma(1-k+z)}=(-1)^{j}\frac{\dd^{j}}{\dd z^{j}}\bigg|_{z=\epsilon_{n}^{(k)}}\frac{(z+1-k)_{k}}{\Gamma(z+1)},
 \label{eq:c_j_tow_exp_inproof}
 \end{equation}
 where $(a)_{k}:=a(a+1)\dots(a+k-1)$ is the Pochhammer symbol.
 
 Recall that, for $k\in\N$, 
 \begin{equation}
  (z+1-k)_{k}=\sum_{l=1}^{k}s(k,l)z^{l},
\label{eq:pochham_expand}
 \end{equation}
 where $s(k,l)$ are the Stirling numbers of the first kind; see~\cite[Sec.~26.8]{dlmf}. The Stirling numbers can be defined recursively but here we will make use only of the special values
 \begin{equation}
  s(k,1)=(-1)^{k-1}(k-1)! \quad\mbox{ and }\quad  s(k,2)=(-1)^{k-1}(k-1)!\left(\gamma+\psi(k)\right)\!,
  \label{eq:stirling_spec_val}
 \end{equation}
 for $k\in\N$. We will also need the Maclaurin series for the reciprocal Gamma function~\cite[Eqs.~(5.7.1) and~(5.7.2)]{dlmf}
 \begin{equation}
  \frac{1}{\Gamma(z+1)}=\sum_{m=0}^{\infty}\pi_{m}z^{m},
 \label{eq:recip_gamma_expand}
 \end{equation}
 where 
 \[
  \pi_{0}=1 \quad \mbox{ and } \quad m\pi_{m}=\gamma \pi_{m-1}+\sum_{j=2}^{m}(-1)^{j+1}\zeta(j)\pi_{m-j}, \quad \mbox{ for } m\in\N,
 \]
 and $\zeta$ stands for the Riemann zeta function. In particular,
 \begin{equation}
  \pi_{0}=1 \quad\mbox{ and }\quad \pi_{1}=\gamma.
  \label{eq:recip_gamma_spec_val}
 \end{equation}

 By using~\eqref{eq:pochham_expand} and~\eqref{eq:recip_gamma_expand} in~\eqref{eq:c_j_tow_exp_inproof}, we obtain
 \begin{equation}
  c_{j}\left(k-\epsilon_{n}^{(k)}\right)=(-1)^{j}j!\sum_{m=j}^{\infty}\binom{m}{j}X_{m}^{(k)}\left(\epsilon_{n}^{(k)}\right)^{\!m-j},
 \label{eq:c_j_exp_inproof}
 \end{equation}
 where
 \begin{equation}
  X_{m}^{(k)}:=\sum_{i=1}^{\min(k,m)}s(k,i)\pi_{m-i}.
  \label{eq:def_X_m_k}
 \end{equation}

 It follows from Theorem~\ref{thm:Ber_sec_compl_asympt} that $\epsilon_{n}^{(k)}=O(1/\log n)$. Therefore we may write
 \begin{equation}
  \epsilon_{n}^{(k)}=\frac{\mu_{n}^{(k)}}{\log n},
 \label{eq:def_mu_n_k}
 \end{equation}
 where $\mu_{n}^{(k)}$ is a bounded sequence. If we take the first two terms from the expansion~\eqref{eq:asympt_ber_sec}
 and use that the left-hand side of~\eqref{eq:asympt_ber_sec} vanishes at $z=x_{k}^{(n)}$, we arrive at the equation
 \[
 c_{0}\!\left(k-\epsilon_{n}^{(k)}\right)\log n+c_{1}\!\left(k-\epsilon_{n}^{(k)}\right)+O\left(\frac{1}{\log n}\right)=0, \quad \mbox{ as } n\to\infty.
 \]
 With the aid of~\eqref{eq:c_j_exp_inproof} and~\eqref{eq:def_mu_n_k}, the above equation can be written as
 \[
  X_{1}^{(k)}\mu_{n}^{(k)}-X_{1}^{(k)}+O\left(\frac{1}{\log n}\right)=0, \quad \mbox{ as } n\to\infty,
 \]
 which implies
 \[
  \mu_{n}^{(k)}=1+O\left(\frac{1}{\log n}\right)\!, \quad \mbox{ as } n\to\infty,
 \]
 since $X_{1}^{(k)}\neq0$. Consequently, we get
 \[
  x_{k}^{(n)}=k-\epsilon_{n}^{(k)}=k-\frac{1}{\log n}+O\left(\frac{1}{\log^{2}n}\right)\!, \quad \mbox{ as } n\to\infty.
 \]

 Similarly, by writing
 \begin{equation}
  \epsilon_{n}^{(k)}=\frac{1}{\log n}+\frac{\nu_{n}^{(k)}}{\log^{2} n},
 \label{eq:def_nu_n_k}
 \end{equation}
 where $\nu_{n}^{(k)}$ is a bounded sequence, repeating the same procedure that uses the first three terms of the asymptotic expansion~\eqref{eq:asympt_ber_sec}, we compute another term in the expansion of $\epsilon_{n}^{(k)}$,
 for $n\to\infty$. More precisely, it follows from~\eqref{eq:asympt_ber_sec} that
 \[
 c_{0}\!\left(k-\epsilon_{n}^{(k)}\right)\log^{2} n+c_{1}\!\left(k-\epsilon_{n}^{(k)}\right)\log n +
 c_{2}\!\left(k-\epsilon_{n}^{(k)}\right)+O\left(\frac{1}{\log n}\right)=0, \quad \mbox{ as } n\to\infty.
 \]
 By using~\eqref{eq:c_j_exp_inproof} and~\eqref{eq:def_nu_n_k}, the above equation implies that
 \[
  X_{1}^{(k)}\nu_{n}^{(k)}+X_{2}^{(k)}+O\left(\frac{1}{\log n}\right)=0, \quad \mbox{ as } n\to\infty.
 \]
 Since
 \[
  X_{1}^{(k)}=(-1)^{k-1}(k-1)! \quad\mbox{ and }\quad X_{2}^{(k)}=(-1)^{k}(k-1)!\psi(k),
 \]
 as one computes from~\eqref{eq:def_X_m_k} by using the special values~\eqref{eq:stirling_spec_val}, \eqref{eq:recip_gamma_spec_val}, and the identity $\psi(1)=-\gamma$, see~\cite[Eq.~(5.4.12)]{dlmf}, we conclude that
 \[
  \nu_{n}^{(k)}=\psi(k)+O\left(\frac{1}{\log n}\right)\!, \quad \mbox{ as } n\to\infty.
 \]
 The above formula used in~\eqref{eq:def_nu_n_k} imply the asymptotic expansion of~$x_{k}^{(n)}$ from the statement.
\end{proof}

\begin{rem}
 Note that for $k=1$, the coefficients~\eqref{eq:def_X_m_k} simplify. Namely, $X_{0}^{(1)}=0$ and $X_{m}^{(1)}=\pi_{m-1}$, for $m\in\N$. Without going into details, we write down several other terms in the asymptotic expansion of the smallest zero:
 \begin{align*}
  x_{1}^{(n)}&=1-\frac{1}{\log n}+\frac{\gamma}{\log^{2}n}-\frac{\gamma^{2}-\pi^{2}/6}{\log^{3}n}
  +\frac{\gamma^{3}-\gamma\pi^{2}/2+3\zeta(3)}{\log^{4}n}\\
  &-\frac{\gamma^{4}-\gamma^{2}\pi^{2}+12\gamma\zeta(3)-\pi^{4}/90}{\log^{5}n}
   +O\left(\frac{1}{\log^{6}n}\right)\!, \quad \mbox{ as } n\to\infty.
 \end{align*}
\end{rem}

\subsection{Another integral representation and more precise localization of zeros}

Further asymptotic analysis as well as an improvement of the localization~\eqref{eq:loc_zeros_first} will rely on an integral representation for the polynomials~$B_{n}^{(n)}$ derived below. This integral representation appeared recently in the article~\cite{blagouchine_i18} of Blagouchine, see~\cite[Eq.~(57)]{blagouchine_i18} and Appendix therein. We provide an alternative proof. We remark that the formula is a generalization of Schr\"{o}der's integral representation for the Bernoulli numbers of the second kind, see~\cite{blagouchine_jis17,schroder_zmp80}.
All fractional powers appearing below have their principal parts.

\begin{thm}\label{thm:beta_integr_repre}
 One has
 \begin{equation}
 B_{n}^{(n)}(z)=(-1)^{n}\frac{n!}{\pi}\int_{0}^{\infty}\frac{u^{z-1}}{(1+u)^{n}}\frac{\pi\cos\pi z-\log(u)\sin\pi z}{\pi^{2}+\log^{2}u}\dd u,
 \label{eq:integr_repre_ber_sec}
 \end{equation}
 for $n\in\N$ and $0<\Re z<n$. 
\end{thm}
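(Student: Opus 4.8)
The plan is to realize the real integral on the right-hand side of~\eqref{eq:integr_repre_ber_sec} as the contribution of a branch cut and then to collapse that contribution onto a single residue. To this end I introduce
\[
 F(w):=\frac{w^{z-1}}{(1-w)^{n}\log w},
\]
with all powers and the logarithm in their principal branch, so that $F$ is holomorphic on $\C\setminus\big((-\infty,0]\cup\{1\}\big)$. The only singularity off the cut $(-\infty,0]$ is $w=1$, and since both $(1-w)^{n}$ and $\log w$ vanish there, it is a pole of order $n+1$.

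The first step is purely algebraic: I relate the integrand to the boundary values of $F$ on the two lips of the cut. For $u>0$ the principal-branch identities $(-u\pm\ii0)^{z-1}=-u^{z-1}e^{\pm\ii\pi z}$ and $\log(-u\pm\ii0)=\log u\pm\ii\pi$ give, after combining the two resulting fractions over $\pi^{2}+\log^{2}u=(\log u+\ii\pi)(\log u-\ii\pi)$ and expanding $e^{\pm\ii\pi z}$ into $\cos$ and $\sin$,
\[
 \frac{1}{2\ii}\big(F(-u+\ii0)-F(-u-\ii0)\big)=\frac{u^{z-1}}{(1+u)^{n}}\,\frac{\pi\cos\pi z-\log(u)\sin\pi z}{\pi^{2}+\log^{2}u}.
\]
Thus the integral in~\eqref{eq:integr_repre_ber_sec} equals $\tfrac{1}{2\ii}\int_{0}^{\infty}\big(F(-u+\ii0)-F(-u-\ii0)\big)\dd u$.

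The second step evaluates this jump integral by contour deformation. I integrate $F$ along the positively oriented boundary of the slit disc $\{|w|<R\}\setminus(-\infty,0]$; the two lips of the cut reproduce $\int_{0}^{\infty}\big(F(-u+\ii0)-F(-u-\ii0)\big)\dd u$ in the limit, while the residue theorem supplies $2\pi\ii\,\Res_{w=1}F$. Here the hypothesis $0<\Re z<n$ enters in an essential way: because $|F(w)|\asymp|w|^{\Re z-1-n}/|\log w|$ as $|w|\to\infty$ with $\Re z-1-n<-1$, the large circular arc contributes nothing, and because $|F(w)|\asymp|w|^{\Re z-1}/|\log w|$ as $w\to0$ with $\Re z>0$, the small circle about the origin contributes nothing either. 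Controlling these two limits and, above all, keeping the branch-cut bookkeeping and the orientation (which fixes the sign of the $2\pi\ii$) consistent is the step I expect to be the main obstacle; everything else is routine once it is in place.

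The final step computes the residue. Substituting $w=1-t$ turns $F$ into $-(1-t)^{z-1}\big/\big(t^{n+1}(1+t/2+t^{2}/3+\cdots)\big)$, so that $\Res_{w=1}F$ equals minus the coefficient of $t^{n}$ in $t(1-t)^{z-1}/\log(1-t)$. Putting $\tau=-t$ and $x=z-1$ in the generating function~\eqref{eq:gener_func_BP_sec} identifies that coefficient with $(-1)^{n+1}b_{n}(z-1)/n!$, whence $\Res_{w=1}F=(-1)^{n}b_{n}(z-1)/n!=(-1)^{n}B_{n}^{(n)}(z)/n!$ by~\eqref{eq:BP_sec_rel_B}. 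Collecting the constants, the integral in~\eqref{eq:integr_repre_ber_sec} equals $\tfrac{1}{2\ii}\cdot2\pi\ii\cdot(-1)^{n}B_{n}^{(n)}(z)/n!=(-1)^{n}\pi B_{n}^{(n)}(z)/n!$, and multiplying by the prefactor $(-1)^{n}n!/\pi$ returns $B_{n}^{(n)}(z)$, as claimed.
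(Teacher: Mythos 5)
Your proof is correct, but it follows a genuinely different route from the paper's. The paper stays entirely within real-variable methods: it starts from the elementary representation~\eqref{eq:int_repre_simple}, writes the integrand as $\Gamma(s+z)/\Gamma(s+z-n)$, applies the reflection formula and the Beta-function integral $\Gamma(u)\Gamma(v)/\Gamma(u+v)=2\int_{0}^{\pi/2}\sin^{2u-1}\theta\cos^{2v-1}\theta\,\dd\theta$, interchanges the order of integration, evaluates the resulting inner integral by the elementary antiderivative of $e^{ax}\sin bx$, and finishes with the substitution $\tan^{2}\theta=u$; the hypothesis $0<\Re z<n$ enters there to keep the Gamma arguments in the right half-plane. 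You instead run a Hankel-type contour argument: the kernel $\bigl(\pi\cos\pi z-\log(u)\sin\pi z\bigr)/(\pi^{2}+\log^{2}u)$ is recognized as the jump of $F(w)=w^{z-1}/\bigl((1-w)^{n}\log w\bigr)$ across the cut $(-\infty,0]$ (your jump computation is correct: the numerator combines to $2\ii(\log u\,\sin\pi z-\pi\cos\pi z)$, giving exactly the stated identity), the slit-disc contour collapses the jump integral onto $2\pi\ii\Res_{w=1}F$ with the hypothesis $0<\Re z<n$ used precisely to kill the arcs at $0$ and at $\infty$, and the residue is identified with $(-1)^{n}B_{n}^{(n)}(z)/n!$ via the generating function~\eqref{eq:gener_func_BP_sec} and~\eqref{eq:BP_sec_rel_B} (I verified the orientation and sign bookkeeping, e.g.\ $\Res_{w=1}F=-\Res_{t=0}F(1-t)$ under $w=1-t$, and your signs are consistent; a spot check at $n=1$ gives $\Res_{w=1}F=-(z-\tfrac12)$, matching $(-1)^{n}B_{n}^{(n)}(z)/n!$). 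What each approach buys: yours ties the formula directly back to the defining generating function, makes transparent why the kernel involves $\pi^{2}+\log^{2}u$ (it is the product of the boundary values of $\log$ on the two lips) and why the strip $0<\Re z<n$ is exactly the natural domain; it is close in spirit to the contour-integration derivations the paper deliberately sidesteps (cf.\ its remark that the formula appears in Blagouchine's work and that an ``alternative proof'' is being given). The paper's route is more elementary in its toolkit --- no branch cuts, no contour deformation, only classical Gamma identities and Fubini --- at the cost of a less conceptual final formula emerging from a chain of substitutions. One small presentational gap on your side: the passage from contours at height $\pm\ii\delta$ to the lip integrals deserves a one-line dominated-convergence justification, but this is routine given the bounds you already state.
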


\begin{proof}
 We again start with the integral formula~\eqref{eq:int_repre_simple} expressing the integrand as a ratio of the Gamma functions:
 \[
  B_{n}^{(n)}(z)=\int_{0}^{1}\prod_{j=1}^{n}(s+z-j)\,\dd s=\int_{0}^{1}\frac{\Gamma(s+z)}{\Gamma(s+z-n)}\dd s.
 \]
 By using the well-known identity
 \[
  \Gamma(z)\Gamma(1-z)=\frac{\pi}{\sin\pi z},
 \]
 we may rewrite the integral formula for $B_{n}^{(n)}(z)$ as
 \begin{equation}
 B_{n}^{(n)}(z)=\frac{1}{\pi}\int_{0}^{1}\sin\left(\pi(s+z) -\pi n\right)\Gamma\left(s+z\right)\Gamma\left(1-s-z+n\right)\dd s.
 \label{eq:tow_int_repre_inproof0}
 \end{equation}
 Recall that for $u,v\in\C$, $\Re u>0$, $\Re v>0$, it holds
 \[
  \frac{\Gamma(u)\Gamma(v)}{\Gamma(u+v)}=2\int_{0}^{\pi/2}\sin^{2u-1}\theta\cos^{2v-1}\theta\dd\theta,
 \]
 see \cite[Eqs.~5.12.2 and~5.12.1]{dlmf}. Using the above formula in~\eqref{eq:tow_int_repre_inproof0}, we get
 \begin{equation}
  B_{n}^{(n)}(z)=(-1)^{n}\frac{2n!}{\pi}\int_{0}^{1}\sin\left(\pi(s+z)\right)\int_{0}^{\pi/2}\sin^{2s+2z-1}\theta\cos^{2n-2s-2z+1}\theta\dd\theta\dd s,
 \label{eq:tow_int_repre_inproof1}
 \end{equation}
 where $z$ has to be restricted such that $0<\Re z<n$ to guarantee that both arguments of the Gamma functions in~\eqref{eq:tow_int_repre_inproof0} are of positive real part.
 By using Fubini's theorem, we may change the order of integration in~\eqref{eq:tow_int_repre_inproof1}. Doing also some elementary manipulations with the trigonometric functions, we arrive at the expression
 \begin{equation}
  B_{n}^{(n)}(z)=(-1)^{n}\frac{2n!}{\pi}\int_{0}^{\pi/2}\cos^{2n}\theta\tan^{2z}\theta\int_{0}^{1}\sin\left(\pi(s+z)\right)\tan^{2s-1}\theta\dd s \dd\theta,
 \label{eq:tow_int_repre_inproof2}
 \end{equation}
 for $0<\Re z<n$.

 As the last step, we evaluate the inner integral in~\eqref{eq:tow_int_repre_inproof2}. We can make use of the elementary integral
 \[
  \int e^{ax}\sin bx\,\dd x=\frac{e^{ax}}{a^{2}+b^{2}}\left(a\sin bx- b\cos bx\right)\!,
 \]
 to compute that
 \begin{equation}
   \int_{0}^{1}\sin\left(\pi(s+z)\right)\tan^{2s-1}\theta\dd s=\frac{\pi\cos\pi z-2\log(\tan\theta)\sin\pi z}{\sin\theta\cos\theta\left(\pi^{2}+4\log^{2}\tan\theta\right)},
   \label{eq:inner_int_comp_inproof}
 \end{equation}
 for $\theta\in(0,\pi/2)$. By using~\eqref{eq:inner_int_comp_inproof} in~\eqref{eq:tow_int_repre_inproof2}, we arrive at the integral representation
 \begin{equation}
    B_{n}^{(n)}(z)=2(-1)^{n}\frac{n!}{\pi}\int_{0}^{\pi/2}\cos^{2n-2}\theta\tan^{2z-1}\theta\frac{\pi\cos \pi z-2\log(\tan\theta)\sin \pi z}{\pi^{2}+4\log^{2}\tan\theta}\dd\theta,
    \label{eq:integr_repre_ber_sec_trigon}
 \end{equation}
 for $n\in\N$ and $0<\Re z<n$.  Substituting for $\tan^{2}\theta=u$ in~\eqref{eq:integr_repre_ber_sec_trigon}, we arrive at the formula~\eqref{eq:integr_repre_ber_sec}. 
\end{proof}

As a first application of the integral formula~\eqref{eq:integr_repre_ber_sec}, we improve the localization of the zeros of~$B_{n}^{(n)}$ given by the inequalities~\eqref{eq:loc_zeros_first}. It turns out that the zeros are located in a half of the respective intervals between two integers.

To do so, we rewrite the integral in~\eqref{eq:integr_repre_ber_sec} to a slightly different form. By writing the integral in~\eqref{eq:integr_repre_ber_sec} as the sum of two integrals integrating from $0$ to $1$ and from $1$ to $\infty$, respectively, substituting $u=1/\tilde{u}$ in the second one (and omitting the tilde notation afterwards),  one obtains the formula
\begin{equation}
  B_{n}^{(n)}(z)=(-1)^{n}\frac{n!}{\pi}\int_{0}^{1}\frac{1}{(1+u)^{n}}\left[\rho_{z}(u)+u^{n}\rho_{-z}(u)\right]\dd u,
  \label{eq:integr_repre_ber_sec_reform1}
\end{equation}
for $n\in\N$ and $0<\Re z<n$, where
\[
 \rho_{z}(u):=u^{z-1}\frac{\pi\cos\pi z-\log(u)\sin\pi z}{\pi^{2}+\log^{2}u}.
\]
Below, $\lfloor x\rfloor$ denotes the integer part of $x\in\R$.

\begin{thm}
 For $n\in\N$, one has
 \[
  k-\frac{1}{2}<x_{k}^{(n)}<k, \quad \mbox{ if}\quad 1\leq k \leq \bigg\lfloor \frac{n}{2}\bigg\rfloor,
 \]
 and
 \[
  k<x_{k+1}^{(n)}<k+\frac{1}{2},  \quad \mbox{ if}\quad \bigg\lfloor \frac{n+1}{2}\bigg\rfloor\leq k \leq n-1.
 \]
 Recall that $x_{n}^{(2n-1)}=n-1/2$.
\end{thm}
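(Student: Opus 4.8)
The plan is to pin down the sign of $B_{n}^{(n)}$ at the half-integers $z=k-\tfrac12$ by means of the representation~\eqref{eq:integr_repre_ber_sec_reform1}, and then to combine this with the sign pattern~\eqref{eq:int_val_sign} at the integers and with the fact, established in Theorem~\ref{thm:Ber_sec_zer_real}, that each interval $(k-1,k)$ contains exactly one zero. Once I know that $B_{n}^{(n)}(k-\tfrac12)$ and $B_{n}^{(n)}(k)$ carry opposite signs, the unique zero $x_{k}^{(n)}$ of $(k-1,k)$ is forced into the right half $(k-\tfrac12,k)$.

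First I would specialize $\rho_{\pm z}$ to $z=k-\tfrac12$ (note $k-\tfrac12\in(0,n)$ in the range considered, so the representation applies). The point of a half-integer is that $\cos\pi z=0$, so both $\rho_{z}$ and $\rho_{-z}$ lose their first term and reduce to
\[
 \rho_{k-1/2}(u)=(-1)^{k+1}u^{k-3/2}\,h(u),\qquad \rho_{-(k-1/2)}(u)=(-1)^{k}u^{-k-1/2}\,h(u),
\]
where $h(u):=-\log u/(\pi^{2}+\log^{2}u)>0$ for $u\in(0,1)$. The two functions therefore carry \emph{opposite} signs, and the bracket in~\eqref{eq:integr_repre_ber_sec_reform1} collapses to
\[
 \rho_{z}(u)+u^{n}\rho_{-z}(u)=(-1)^{k+1}h(u)\,u^{k-3/2}\bigl(1-u^{\,n-2k+1}\bigr).
\]

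Now comes the step that decides the range of validity. For $u\in(0,1)$ the factors $h(u)$ and $u^{k-3/2}$ are positive, and $1-u^{\,n-2k+1}>0$ precisely when the exponent $n-2k+1$ is positive, i.e.\ when $k\le\lfloor n/2\rfloor$. In that regime the integrand in~\eqref{eq:integr_repre_ber_sec_reform1} keeps the constant sign $(-1)^{k+1}$ on $(0,1)$, whence $B_{n}^{(n)}(k-\tfrac12)$ has sign $(-1)^{n+k+1}$. Since~\eqref{eq:int_val_sign} gives $\sgn B_{n}^{(n)}(k)=(-1)^{n+k}$, the values at $k-\tfrac12$ and $k$ differ in sign, producing a zero in $(k-\tfrac12,k)$; by the uniqueness in each $(k-1,k)$ from Theorem~\ref{thm:Ber_sec_zer_real} this zero is $x_{k}^{(n)}$, proving the first chain of inequalities.

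Finally, the second chain requires no new work: applying the reflection symmetry~\eqref{eq:symmetry_zeros} to the already-proven bound for the index $j=n-k$ (which runs through $1\le j\le\lfloor n/2\rfloor$ as $k$ runs through $\lfloor (n+1)/2\rfloor\le k\le n-1$) turns $j-\tfrac12<x_{j}^{(n)}<j$ into $k<x_{k+1}^{(n)}<k+\tfrac12$, and the value $x_{n}^{(2n-1)}=n-\tfrac12$ is already recorded in Theorem~\ref{thm:Ber_sec_zer_real}. The only genuinely delicate point is the middle computation: because $\rho_{z}$ and $u^{n}\rho_{-z}$ are individually nonzero and of opposite sign, one cannot discard either term, and it is precisely the competition between the powers $u^{k-3/2}$ and $u^{n-k-1/2}$—equivalently the sign of $n-2k+1$—that simultaneously makes the integrand sign-definite and singles out the threshold $k=\lfloor n/2\rfloor$.
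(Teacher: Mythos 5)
Your proposal is correct and follows essentially the same route as the paper's proof: evaluating the representation~\eqref{eq:integr_repre_ber_sec_reform1} at $z=k-\tfrac12$, where the cosine term vanishes, collapsing the bracket to $(-1)^{k+1}h(u)u^{k-3/2}\bigl(1-u^{n-2k+1}\bigr)$ so that the integrand is sign-definite precisely for $1\leq k\leq\lfloor n/2\rfloor$, comparing with the sign pattern~\eqref{eq:int_val_sign} at the integers, and transferring the result to the upper range via the symmetry~\eqref{eq:symmetry_zeros}. The sign computations and the index bookkeeping in the symmetry step all check out.
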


\begin{rem}
 The global localization of the zeros in the intervals of fixed lengths given above is the best possible. Indeed, as it is shown below, the zeros of $B_{n}^{(n)}$ located around $n/2$ cluster at half-integers as $n\to\infty$, while
 the zeros located in a left neighborhood of the point~$n$ (or in a right neighborhood of~$0$) cluster at integers as $n\to\infty$.
\end{rem}

\begin{proof}
 Clearly, if the first set of inequalities is established then the second one follows readily from the symmetry~\eqref{eq:symmetry_zeros}.
 
 For $1\leq k \leq \lfloor n/2\rfloor$, write $z=k-1/2$ in~\eqref{eq:integr_repre_ber_sec_reform1}. Since
 \[
  \rho_{k-1/2}(u)=-u^{2k-1}\rho_{-k+1/2}(u)=(-1)^{k}u^{k-3/2}\frac{\log u}{\pi^{2}+\log^{2}u},
 \]
 we have
 \[
  B_{n}^{(n)}\left(k-\frac{1}{2}\right)=(-1)^{n+k}\frac{n!}{\pi}\int_{0}^{1}\frac{u^{k-3/2}\left(1-u^{n-2k+1}\right)}{(1+u)^{n}}\frac{\log u}{\pi^{2}+\log^{2}u}\dd u.
 \]
 The integrand is obviously a negative function on $(0,1)$ for any $1\leq k \leq \lfloor n/2\rfloor$. Hence
 \[
  (-1)^{n+k+1}B_{n}^{(n)}\left(k-\frac{1}{2}\right)>0, \quad \mbox{ for}\quad 1\leq k \leq \bigg\lfloor \frac{n}{2}\bigg\rfloor.
 \]
 Taking also~\eqref{eq:int_val_sign} into account, we observe that the values 
 $B_{n}^{(n)}(k)$ and $B_{n}^{(n)}(k-1/2)$ differ in sign for $1\leq k \leq \lfloor n/2\rfloor$. Hence
 \[
  x_{k}^{(n)}\in\left(k-\frac{1}{2},k\right)\!, \quad \mbox{ for}\quad 1\leq k \leq \bigg\lfloor \frac{n}{2}\bigg\rfloor.
 \]
\end{proof}

\subsection{Asymptotic expansion of $B_{n}^{(n)}\left(z+\alpha n\right)$ and consequences for zeros}

Theorem~\ref{thm:asympt_zer_small} and Remark~\ref{rem:asympt_zer_large} give an information on the asymptotic behavior of the small or large zeros of the Bernoulli polynomials of the second kind. Bearing in mind the symmetry~\eqref{eq:symmetry_ber_sec},  the zeros of $B_{n}^{(n)}$ located ``in the middle'', i.e., around the point~$n/2$ are of interest, too. More generally, we will study the asymptotic behavior of zeros of $B_{n}^{(n)}$ that are traced along the positive real line at a speed $\alpha\in(0,1)$ by investigating the asymptotic behavior of $B_{n}^{(n)}(z+\alpha n)$, for $n\to\infty$, focusing particularly on the case $\alpha=1/2$.

In order to derive the asymptotic behavior $B_{n}^{(n)}(z+\alpha n)$ for $n$ large, we apply Laplace's method to the integral representation obtained in Theorem~\ref{thm:beta_integr_repre}. We refer reader to~\cite[Sec.~3.7]{olver_97} for a general description of Laplace's method. Here we use a particular case of Laplace's method adjusted to the situation which appears below. In particular, we need the variant where the extreme point is an inner point of the integration interval which is an easy modification of the standard form of Laplace's method where the extreme point is assumed to be one of the endpoints of the integration interval.

\begin{lem}[Laplace's method with an interior extreme point]\label{lem:laplace}
 Let $f$ be real-valued and $g$ complex-valued continuous functions on~$(0,\infty)$ independent of~$n$. Assume further that $f$ and $g$ are analytic functions at a point $a\in(0,\infty)$ where $f$ has a unique global minimum in $(0,\infty)$. Let $f_{k}$ and $g_{k}$ be the coefficients from the Taylor series
 \begin{equation}
  f(u)=f(a)+\sum_{k=0}^{\infty}f_{k}(u-a)^{k+2} \quad\mbox{ and }\quad g(u)=\sum_{k=0}^{\infty}g_{k}(u-a)^{k+\ell},
  \label{eq:tayl_ser_laplace}
 \end{equation}
 where $\ell\in\{0,1\}$ and $g_{0}\neq0$. Suppose moreover that $f_{0}\neq0$. Then, for $n\to\infty$, one has
  \begin{equation}
   \int_{0}^{\infty}e^{-nf(u)}g(u)\dd u=\frac{2\sqrt{\pi}}{\ell+1}e^{-nf(a)}\left[\frac{c_{\ell}}{n^{\ell+1/2}}+O\left(\frac{1}{n^{\ell+3/2}}\right)\!\right]
   \label{eq:int_asympt_laplace}
  \end{equation}
  provided that the integral converges absolutely for all $n$ sufficiently large. The coefficients $c_{\ell}$ are expressible in terms of $f_{\ell}$ and $g_{\ell}$ as follows:
  \[
   c_{0}=\frac{g_{0}}{2f_{0}^{1/2}} \quad\mbox{ and }\quad c_{1}=\frac{2f_{0}g_{1}-3f_{1}g_{0}}{4f_{0}^{5/2}}.
  \]
 \end{lem}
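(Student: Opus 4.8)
The plan is to run the classical Laplace argument at the interior minimum $a$: localize to a small neighborhood of $a$, straighten the exponent into an exact Gaussian by an analytic substitution, and then read off the contributions of the first few Gaussian moments. The two-sided geometry of an interior point is exactly what makes the odd-order terms drop out and what fixes the precise shape of the prefactor $2\sqrt{\pi}/(\ell+1)$.

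First I would factor out $e^{-nf(a)}$ and set $F:=f-f(a)\geq0$, so that $F(a)=0$ and, by~\eqref{eq:tayl_ser_laplace}, $F(u)=f_0(u-a)^2+O((u-a)^3)$; since $a$ is a minimum and $f_0\neq0$ this forces $f_0>0$, so $\sqrt{f_0}$ is a genuine positive number. Fix $\delta>0$ inside the common disk of analyticity of $f$ and $g$ and split the integral as $\int_0^\infty=\int_{|u-a|<\delta}+\int_{|u-a|\geq\delta}$. On the far set one has $F\geq\eta>0$ for some $\eta$ (here $\eta>0$ because $a$ is the \emph{unique} global minimum; in the applications $f\to\infty$ as $u\to0^+$ and $u\to\infty$, which makes this immediate), so writing $e^{-nF}=e^{-(n-N_0)F}e^{-N_0F}$ and invoking the assumed absolute convergence of the integral for $n\geq N_0$ bounds the far contribution by $e^{-(n-N_0)\eta}\cdot\mathrm{const}$. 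This is exponentially small and hence negligible against the $n^{-\ell-1/2}$ main term; this localization step is routine.

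The heart of the argument is the substitution. On $|u-a|<\delta$ put $w=u-a$ and write $F=f_0w^2\psi(w)$ with $\psi$ analytic and $\psi(0)=1$; then $v:=w\sqrt{\psi(w)}$ is an analytic local diffeomorphism with $v(0)=0$, $v'(0)=1$, agreeing in sign with $w$, and $F=f_0v^2$ identically. The near-integral becomes $\int_{-\delta_-}^{\delta_+}e^{-nf_0v^2}G(v)\,\dd v$, where $G(v):=g(u(v))\,\frac{\dd w}{\dd v}$ is analytic at $0$ with $G(v)=g_0v^\ell+O(v^{\ell+1})$. I would then replace the asymmetric limits $(-\delta_-,\delta_+)$ by $(-\infty,\infty)$ at the cost of $O(e^{-n\kappa})$ (Gaussian tails), Taylor-expand $G$, and integrate term by term using
\[
 \int_{\R}e^{-nf_0v^2}v^{p}\,\dd v=0 \ (p\text{ odd}),\qquad \int_{\R}e^{-nf_0v^2}v^{p}\,\dd v=\frac{\Gamma\!\left(\tfrac{p+1}{2}\right)}{(nf_0)^{(p+1)/2}}\ (p\text{ even}),
\]
the Taylor remainder contributing a strictly lower order. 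For $\ell=0$ the $v^0$ moment gives the leading $g_0\sqrt{\pi/(nf_0)}$ (the $v^1$ term being odd, hence annihilated), which is exactly $2\sqrt{\pi}\,c_0/\sqrt{n}$; for $\ell=1$ the odd $v^1$ term integrates to zero and the first surviving contribution is the $v^2$ moment.

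The one genuinely nonroutine computation—and the step I expect to be most error-prone—is extracting the coefficient of $v^2$ in $G$ for $\ell=1$. This requires the quadratic data of the inverse change of variables, namely $w=v-\tfrac{f_1}{2f_0}v^2+\cdots$ and $\frac{\dd w}{\dd v}=1-\tfrac{f_1}{f_0}v+\cdots$; combining these with $g(u)=g_0w+g_1w^2+\cdots$ yields $G(v)=g_0v+\bigl(g_1-\tfrac{3f_1g_0}{2f_0}\bigr)v^2+O(v^3)$, and the $v^2$ moment then produces $\sqrt{\pi}\,c_1/n^{3/2}$ with $c_1=(2f_0g_1-3f_1g_0)/(4f_0^{5/2})$. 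Tracking this $3f_1$ correctly—it is $\tfrac{f_1}{2f_0}$ coming from the substitution plus $\tfrac{f_1}{f_0}$ from the Jacobian—is the crux; everything else is bookkeeping together with the standard Laplace tail estimates.
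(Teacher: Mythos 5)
The paper never actually proves this lemma: it is presented as a known variant of Laplace's method, with a citation to Olver and the remark that moving the minimum from an endpoint to an interior point is ``an easy modification.'' So there is no internal proof to compare against; your argument fills in exactly what the paper delegates to the literature, and it does so by the standard route (localization, exponent-straightening substitution, Gaussian moments). I checked the computations. Writing $F=f_0w^2\psi(w)$, $v=w\sqrt{\psi(w)}$, the inverse data $w=v-\tfrac{f_1}{2f_0}v^2+\cdots$ and $\dd w/\dd v=1-\tfrac{f_1}{f_0}v+\cdots$ do give $G(v)=g_0v+\bigl(g_1-\tfrac{3f_1g_0}{2f_0}\bigr)v^2+O(v^3)$ for $\ell=1$; combined with $\int_\R e^{-nf_0v^2}v^2\,\dd v=\tfrac{\sqrt\pi}{2}(nf_0)^{-3/2}$ this yields $\sqrt{\pi}\,c_1n^{-3/2}$ with $c_1=(2f_0g_1-3f_1g_0)/(4f_0^{5/2})$, and the $\ell=0$ case reproduces $g_0\sqrt{\pi/(nf_0)}=2\sqrt{\pi}\,c_0n^{-1/2}$, so both constants and the prefactor $2\sqrt{\pi}/(\ell+1)$ match the statement. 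Since odd powers of $v$ integrate to zero exactly over the symmetrized interval, expanding $G$ one order beyond the displayed terms gives the claimed $O(n^{-\ell-3/2})$ error; your phrasing there is loose but the mechanism is right.

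One step, as literally written, is invalid: the assertion that $F\geq\eta>0$ on $\{|u-a|\geq\delta\}$ ``because $a$ is the unique global minimum.'' On the open interval $(0,\infty)$ the infimum of $F$ over the far set need not be attained, and $F$ may tend to $0$ as $u\to\infty$ or $u\to0+$ even though $a$ is the unique global minimizer. This is not pedantry, because then the conclusion itself can fail: take $a=1$, $f$ continuous and positive away from $1$ with $f(u)=(u-1)^2$ near $1$ and $f(u)=1/u$ for $u\geq2$, and $g$ continuous with $g\equiv1$ near $1$ and $g(u)=u^{-2}$ for $u\geq2$. All hypotheses of the lemma hold (with $\ell=0$, $f_0=g_0=1$) and the integral converges absolutely for every $n$, yet the far contribution $\int_2^\infty e^{-n/u}u^{-2}\,\dd u=\tfrac1n(1-e^{-n/2})$ is of exact order $n^{-1}$, which is not $O(n^{-3/2})$. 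So the lemma needs the additional hypothesis $\liminf_{u\to0+}f(u)>f(a)$ and $\liminf_{u\to\infty}f(u)>f(a)$ --- which is precisely what your parenthetical remark supplies, since in the paper's applications $f(u)=\log(1+u)-\alpha\log u\to\infty$ at both endpoints. With that condition made explicit (rather than attributed to uniqueness of the minimum), your tail estimate is exponentially small as claimed and the proof is complete and correct for every use the paper makes of the lemma.
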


\begin{rem}\label{rem:uniform_laplace}
 Suppose that the coefficients $g_{k}=g_{k}(\xi)$ in~\eqref{eq:tayl_ser_laplace} depend continuously on an additional parameter $\xi\in K$ where $K$ is a compact subset of $\C$ and the power series for $g$ in~\eqref{eq:tayl_ser_laplace} converges uniformly in $\xi\in K$. Then the asymptotic expansion~\eqref{eq:int_asympt_laplace} holds uniformly in $\xi\in K$ as well provided that the integral converges uniformly in $\xi\in K$ for all $n$ sufficiently large.
\end{rem}

Now, we are ready to deduce an asymptotic expansion of $B_{n}^{(n)}(z+\alpha n)$ for $n\to\infty$.

\begin{thm}\label{thm:asympt_ber_alpha}
 For $\alpha\in(0,1)$ fixed, the asymptotic expansion
 \begin{align}
  \frac{(-1)^{n}\sqrt{n}}{n!\,\alpha^{\alpha n}(1-\alpha)^{(1-\alpha)n}}B_{n}^{(n)}(z+\alpha n)=&\sqrt{\frac{2}{\pi}}\frac{\alpha^{z-1/2}(1-\alpha)^{-z-1/2}}{\pi^{2}+\tau_{\alpha}^{2}}\nonumber\\
  &\times\left[\pi\cos(\pi z+\pi\alpha n)-\tau_{\alpha}\sin(\pi z+\pi\alpha n)\right]+O\left(\frac{1}{n}\right)
  \label{eq:asympt_ber_alpha}
 \end{align}
 holds locally uniformly in $z\in\C$ as $n\to\infty$, where
 \begin{equation}
  \tau_{\alpha}:=\log\frac{\alpha}{1-\alpha}.
 \label{eq:def_tau}
 \end{equation}
\end{thm}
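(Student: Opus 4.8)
The plan is to apply Laplace's method (Lemma~\ref{lem:laplace}) to the integral representation of Theorem~\ref{thm:beta_integr_repre} evaluated at the shifted point $z+\alpha n$. Writing~\eqref{eq:integr_repre_ber_sec} at $z+\alpha n$ and separating the part responsible for the exponential growth in $n$, I would first factor
\[
 \frac{u^{z+\alpha n-1}}{(1+u)^{n}}=u^{z-1}e^{-nf(u)}, \qquad f(u):=\log(1+u)-\alpha\log u,
\]
so that $B_{n}^{(n)}(z+\alpha n)$ is expressed, up to the prefactor $(-1)^{n}n!/\pi$, as a Laplace integral $\int_{0}^{\infty}e^{-nf(u)}g(u)\,\dd u$ with amplitude $g(u)=u^{z-1}\bigl(\pi\cos(\pi(z+\alpha n))-\log(u)\sin(\pi(z+\alpha n))\bigr)/(\pi^{2}+\log^{2}u)$.

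The key preliminary observation is that this amplitude still depends on $n$ through the oscillatory factors $\cos(\pi(z+\alpha n))$ and $\sin(\pi(z+\alpha n))$, whereas Lemma~\ref{lem:laplace} requires an $n$-independent amplitude. Since these factors are constant in the integration variable $u$, I would pull them outside the integral and split the expression into the two $n$-independent Laplace integrals with amplitudes
\[
 g_{1}(u)=\frac{u^{z-1}}{\pi^{2}+\log^{2}u} \quad\text{and}\quad g_{2}(u)=\frac{u^{z-1}\log u}{\pi^{2}+\log^{2}u},
\]
both analytic on $(0,\infty)$ because $\pi^{2}+\log^{2}u\geq\pi^{2}>0$ there. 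I would then carry out the Laplace analysis of $f$. A direct computation gives $f'(u)=(u(1-\alpha)-\alpha)/(u(1+u))$, whose only zero in $(0,\infty)$ is $a=\alpha/(1-\alpha)=e^{\tau_{\alpha}}$; as $f'$ changes sign from negative to positive there while $f(u)\to+\infty$ at both endpoints, the point $a$ is the unique global minimum. One checks that $e^{-nf(a)}=\alpha^{\alpha n}(1-\alpha)^{(1-\alpha)n}$, cancelling the matching factor in~\eqref{eq:asympt_ber_alpha}, and that $f''(a)=(1-\alpha)^{3}/\alpha\neq0$, so in the notation of Lemma~\ref{lem:laplace} one has $\ell=0$, $f_{0}=(1-\alpha)^{3}/(2\alpha)$, and $g_{1}(a)=a^{z-1}/(\pi^{2}+\tau_{\alpha}^{2})$, $g_{2}(a)=\tau_{\alpha}a^{z-1}/(\pi^{2}+\tau_{\alpha}^{2})$, both nonzero generically.

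Inserting the leading coefficient $c_{0}=g_{j}(a)/(2f_{0}^{1/2})$ into~\eqref{eq:int_asympt_laplace} and multiplying by the prefactor $(-1)^{n}\sqrt{n}/(n!\,\alpha^{\alpha n}(1-\alpha)^{(1-\alpha)n})$, the $\sqrt{n}$ cancels the $n^{-1/2}$ and the exponential cancels $e^{-nf(a)}$; the algebraic powers combine through $a^{z-1}=\alpha^{z-1}(1-\alpha)^{1-z}$ into $\alpha^{z-1/2}(1-\alpha)^{-z-1/2}$, and recombining the two contributions with the pulled-out trigonometric factors produces precisely the bracket $\pi\cos(\pi z+\pi\alpha n)-\tau_{\alpha}\sin(\pi z+\pi\alpha n)$. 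The $O(n^{-3/2})$ Laplace remainder becomes $O(1/n)$ after multiplication by $\sqrt{n}$, and since $|\cos|,|\sin|\leq1$ this bound survives the recombination.

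The part demanding the most care, and the main obstacle, is the claimed \emph{local uniformity} in $z$ together with the verification of the hypotheses of the Laplace lemma. Here I would invoke Remark~\ref{rem:uniform_laplace}: the Taylor coefficients of $g_{1},g_{2}$ at $a$ depend analytically on $z$ and their power series converge uniformly on compact $z$-sets, while the integrals converge absolutely and uniformly for $z$ in a compact set and $n$ large, since near $u=0$ the integrand is $O(u^{\alpha n+\Re z-1})$ and near $u=\infty$ it is $O(u^{-(1-\alpha)n+\Re z-1})$, both integrable once $n$ exceeds a threshold depending only on the compact set. This justifies applying~\eqref{eq:int_asympt_laplace} uniformly and hence yields the local uniformity of~\eqref{eq:asympt_ber_alpha}.
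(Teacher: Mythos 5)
Your proposal follows the same route as the paper's own proof: the same splitting of the integral representation at $z+\alpha n$ into two $n$-independent Laplace integrals with phase $f(u)=\log(1+u)-\alpha\log u$, the same computation of the interior minimum $u_{\alpha}=\alpha/(1-\alpha)$ and of $f''(u_{\alpha})=(1-\alpha)^{3}/\alpha$, and the same appeal to Remark~\ref{rem:uniform_laplace} for the local uniformity; the algebra recombining the two contributions into the bracket $\pi\cos(\pi z+\pi\alpha n)-\tau_{\alpha}\sin(\pi z+\pi\alpha n)$ is correct.

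There is, however, one genuine gap: the case $\alpha=1/2$. You assert that $g_{1}(a)$ and $g_{2}(a)$ are ``both nonzero generically,'' but the theorem is claimed for every fixed $\alpha\in(0,1)$, and at $\alpha=1/2$ one has $a=1$ and $\tau_{1/2}=\log 1=0$, so $g_{2}(a)=\tau_{1/2}\,a^{z-1}/(\pi^{2}+\tau_{1/2}^{2})=0$. At this point Lemma~\ref{lem:laplace} with $\ell=0$ cannot be applied to the second integral (its hypothesis $g_{0}\neq0$ fails), so your derivation as written does not cover $\alpha=1/2$ --- which is precisely the case the paper exploits later for the middle zeros. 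The repair is what the paper does: since $\log u$ has a simple zero at $u=1$, the amplitude $g_{2}$ vanishes to exactly first order at $a$, so Lemma~\ref{lem:laplace} applies with $\ell=1$ and gives $2^{n}I_{2}(n)=O(n^{-3/2})$; because $\tau_{1/2}=0$, the sine term is absent from the leading bracket and formula~\eqref{eq:asympt_ber_alpha} persists unchanged. A second, smaller inaccuracy: for complex $z$ the claim $|\cos(\pi z+\pi\alpha n)|,|\sin(\pi z+\pi\alpha n)|\leq1$ is false (these factors grow like $e^{\pi|\Im z|}$); what actually saves the additive $O(1/n)$ error is that they are bounded on compact subsets of $\C$ uniformly in $n$, which suffices for the asserted local uniformity.
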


\begin{proof}
 By writing $z+\alpha n$ instead of $z$ in~\eqref{eq:integr_repre_ber_sec}, one obtains
 \begin{equation}
  (-1)^{n}\frac{\pi}{n!} B_{n}^{(n)}(z+\alpha n)=I_{1}(n)\cos(\pi z+\pi\alpha n)-I_{2}(n)\sin(\pi z+\pi\alpha n),
 \label{eq:B_n_lin_comb_trig}
 \end{equation}
 for $-\alpha n<\Re z<(1-\alpha)n$, where
 \begin{equation}
  I_{i}(n):=\int_{0}^{\infty}e^{-nf(u)}g_{i}(u)\dd u, \quad i\in\{1,2\},
  \label{eq:def_I_12}
 \end{equation}
 and
 \begin{equation}
  f(u):=\log(1+u)-\alpha\log u,
  \label{eq:def_f}
 \end{equation}
 \vskip2pt
 \begin{equation}
  g_{1}(u):=\frac{\pi u^{z-1}}{\pi^{2}+\log^{2} u}\quad \mbox{ and } \quad g_{2}(u):=\frac{u^{z-1}\log u}{\pi^{2}+\log^{2}u}.
  \label{eq:def_g_12}
 \end{equation}
 The integrals~\eqref{eq:def_I_12} are in the suitable form for the application of Laplace's method.
 
 One easily verifies that the function $f$ defined by~\eqref{eq:def_f} has the simple global minimum at the point
 \[
  u_{\alpha}:=\frac{\alpha}{1-\alpha}.
 \]
 Further, the functions $f$ and $g_{1}$, $g_{2}$ from~\eqref{eq:def_g_12} are analytic in a neighborhood of $u_{\alpha}$ having the expansions
 \[
  f(u)=f(u_{\alpha})+\frac{(1-\alpha)^{3}}{2\alpha}(u-u_{\alpha})^{2}+O\left((u-u_{\alpha})^{3}\right)\!,
 \]
 and
 \begin{equation}
  g_{i}(u)=g_{i}(u_{\alpha})+O\left(u-u_{\alpha}\right),
 \label{eq:exp_g_12}
 \end{equation}
 for $u\to u_{\alpha}$. Moreover, the expansions for $g_{i}$ in~\eqref{eq:exp_g_12} are local uniform in $z\in\C$ as one readily checks by elementary means. 
 
 Suppose first that $\alpha\neq1/2$. Then $g_{1}(u_{\alpha})\neq0$ as well as $g_{2}(u_{\alpha})\neq0$ and Lemma~\ref{lem:laplace} applies to both $I_{1}(n)$ and $I_{2}(n)$ with $\ell=0$ resulting in the asymptotic formulas
 \begin{equation}
  I_{1}(n)=\alpha^{\alpha n}(1-\alpha)^{(1-\alpha)n}\left[\frac{\sqrt{2\pi^{3}}\alpha^{z-1/2}(1-\alpha)^{-z-1/2}}{\pi^{2}+\log^{2}\left(\alpha/(1-\alpha)\right)}\frac{1}{\sqrt{n}}+O\left(\frac{1}{n^{3/2}}\right)\right]
  \label{eq:asympt_I_1_alp}
 \end{equation}
 and
 \[
  I_{2}(n)=\alpha^{\alpha n}(1-\alpha)^{(1-\alpha)n}\left[\frac{\sqrt{2\pi}\alpha^{z-1/2}(1-\alpha)^{-z-1/2}}{\pi^{2}+\log^{2}\left(\alpha/(1-\alpha)\right)}\log\left(\frac{\alpha}{1-\alpha}\right)\frac{1}{\sqrt{n}}+O\left(\frac{1}{n^{3/2}}\right)\right]\!,
 \]
 for $n\to\infty$.
 By plugging the above expressions for $I_{i}(n)$, $i\in\{1,2\}$, into~\eqref{eq:B_n_lin_comb_trig} one gets the expansion~\eqref{eq:asympt_ber_alpha}.
 
 If $\alpha=1/2$, $g_{2}(u_{\alpha})=0$ and hence Lemma~\ref{lem:laplace} applies to $I_{2}(n)$ with $\ell=1$. It follows that $2^{n}I_{2}(n)=O(n^{-3/2})$, as $n\to\infty$, for $\alpha=1/2$. The asymptotic expansion~\eqref{eq:asympt_I_1_alp} for $I_{1}(n)$ remains unchanged even if $\alpha=1/2$. In total, taking again~\eqref{eq:B_n_lin_comb_trig} into account, we see that the expansion~\eqref{eq:asympt_ber_alpha} remains valid also for $\alpha=1/2$ since $\tau_{\alpha}$ vanishes in this case. 

 In order to conclude that the expansion~\eqref{eq:asympt_ber_alpha} is local uniform in $z\in\C$, it suffices to check that the integrals $I_{i}(n)$, $i\in\{1,2\}$, converge locally uniformly in $z\in\C$ for all $n$ sufficiently large; see Remark~\ref{rem:uniform_laplace}. Let $K\in\C$ be a compact set. Then if $z\in K$, $|\Re z|<C$ for some $C>0$. Concerning for instance $I_{1}(n)$, it holds that
 \[
  \int_{0}^{\infty}\left|\frac{u^{\alpha n+z-1}}{(1+u)^{n}}\frac{1}{\pi^{2}+\log^{2}u}\right|\dd u\leq
  \int_{0}^{1}u^{\alpha n-C-1}\dd u+\int_{1}^{\infty}\frac{u^{\alpha n+C-1}}{(1+u)^{n}}\dd u.
 \]
 For $n$ sufficiently large, the first integral on the right-hand side above can be majorized by~$1$ and the second integral converges at infinity because $\alpha<1$. Consequently, the integral $I_{1}(n)$ converges uniformly in $z\in K$ for all $n$ large enough. A similar reasoning shows that the same is true for $I_{2}(n)$ which concludes the proof.
\end{proof}

In the particular case when $\alpha=1/2$, Theorem~\ref{thm:asympt_ber_alpha} yields the following limit formulas
that can be compared with Dilcher's limit formulas for the Bernoulli polynomials of the first kind~\cite[Cor.~1]{dilcher_jat87}.

\begin{cor}
 One has
 \[
  \lim_{n\to\infty}(-1)^{n}\frac{2^{2n-1}\sqrt{n}}{(2n)!}\beta_{2n}(z)=\frac{\cos\pi z}{\pi^{3/2}}
 \]
 and
 \[
  \lim_{n\to\infty}(-1)^{n}\frac{2^{2n}\sqrt{n}}{(2n+1)!}\beta_{2n+1}(z)=\frac{\sin \pi z}{\pi^{3/2}}
 \]
 locally uniformly in $\C$, where
 \[
 \beta_{n}(z):=B_{n}^{(n)}\left(z+\frac{n}{2}\right).
 \]
\end{cor}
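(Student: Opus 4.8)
The plan is to read off both limits from Theorem~\ref{thm:asympt_ber_alpha} specialized to the symmetric speed $\alpha=1/2$. The first thing I would record is how the right-hand side of~\eqref{eq:asympt_ber_alpha} degenerates at this value: by~\eqref{eq:def_tau} one has $\tau_{1/2}=\log 1=0$, so the sine term disappears, while the two prefactors collapse to $\alpha^{\alpha n}(1-\alpha)^{(1-\alpha)n}=2^{-n}$ and $\alpha^{z-1/2}(1-\alpha)^{-z-1/2}=2$. Substituting these into~\eqref{eq:asympt_ber_alpha} and recalling the abbreviation $\beta_{n}(z)=B_{n}^{(n)}(z+n/2)$ turns the whole statement into the single relation
\[
 \frac{(-1)^{n}2^{n}\sqrt{n}}{n!}\beta_{n}(z)=\frac{2\sqrt{2}}{\pi^{3/2}}\cos\left(\pi z+\frac{\pi n}{2}\right)+O\left(\frac{1}{n}\right),\quad n\to\infty,
\]
valid locally uniformly in $z\in\C$.

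From here the two assertions correspond to the two parities of the index, so I would simply substitute $n\mapsto 2n$ and $n\mapsto 2n+1$ in turn. In the even case I use $(-1)^{2n}=1$, the identity $\cos(\pi z+\pi n)=(-1)^{n}\cos\pi z$, and $\sqrt{2n}=\sqrt{2}\sqrt{n}$; dividing through by the constant $2\sqrt{2}$ and transferring the residual factor $(-1)^{n}$ to the left-hand side (using $((-1)^{n})^{2}=1$) yields precisely
\[
 (-1)^{n}\frac{2^{2n-1}\sqrt{n}}{(2n)!}\beta_{2n}(z)=\frac{\cos\pi z}{\pi^{3/2}}+O\left(\frac{1}{n}\right),
\]
and letting $n\to\infty$ gives the first formula. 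In the odd case I use $(-1)^{2n+1}=-1$ together with $\cos\bigl(\pi z+\pi(2n+1)/2\bigr)=-(-1)^{n}\sin\pi z$; the two sign changes cancel, and the same normalization (division by $2\sqrt{2}$ and multiplication by $(-1)^{n}$) produces the sine formula in the limit.

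The only step requiring a moment's attention is the square-root factor in the odd case, where Theorem~\ref{thm:asympt_ber_alpha} delivers $\sqrt{2n+1}$ whereas the target contains $\sqrt{n}$. I would write $\sqrt{2n+1}=\sqrt{2}\,\sqrt{n}\,\bigl(1+O(1/n)\bigr)$; since I am extracting only a limit and the quantity $2^{2n}\sqrt{n}\,\beta_{2n+1}(z)/(2n+1)!$ is already bounded (its limit being finite), the multiplicative factor $1+O(1/n)$ is absorbed into the error term and vanishes as $n\to\infty$. I therefore do not anticipate any real obstacle: the proof is entirely a matter of evaluating the constants $\tau_{1/2}$, $2^{-n}$, $2$, applying the elementary parity reductions of the cosine, and reconciling $\sqrt{2n}$ and $\sqrt{2n+1}$ with $\sqrt{n}$. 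Local uniformity in $z$ is inherited directly from Theorem~\ref{thm:asympt_ber_alpha} and needs no separate argument.
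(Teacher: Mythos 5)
Your proposal is correct and is exactly the paper's (implicit) argument: the corollary is stated as an immediate specialization of Theorem~\ref{thm:asympt_ber_alpha} to $\alpha=1/2$, where $\tau_{1/2}=0$, the prefactors reduce to $2^{-n}$ and $2$, and the two limits follow by splitting into even and odd indices via the parity identities for the cosine. Your extra care with $\sqrt{2n+1}=\sqrt{2}\,\sqrt{n}\,(1+O(1/n))$ and the inheritance of local uniformity are both handled correctly.
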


We can combine Theorem~\ref{thm:asympt_ber_alpha} and the Hurwitz theorem~\cite[Thm.~2.5, p.~152]{conway_78} in order to deduce the asymptotic behavior of the zeros of $B_{n}^{(n)}$ located around the point $\alpha n$ for $n$ large.

\begin{cor}\label{cor:lim_zer_alp}
 Let $\alpha\in(0,1)$. Then, for any $\ell\in\Z$, one has
 \[
  \lim_{n\to\infty}\left(x_{\lfloor\alpha n\rfloor+\ell}^{(n)}-\lfloor\alpha n\rfloor\right)=\ell-1+\frac{1}{\pi}\arccot\frac{\tau_{\alpha}}{\pi},
 \]
 where $\tau_{\alpha}$ is defined by~\eqref{eq:def_tau} and $\lfloor x\rfloor$ denotes the integer part of a real number $x$.
\end{cor}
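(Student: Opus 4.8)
The plan is to renormalize $B_{n}^{(n)}$ so that, after recentering its argument at the integer $\lfloor\alpha n\rfloor$, the resulting sequence of entire functions converges locally uniformly to a fixed ($n$-independent) entire function, and then to invoke the Hurwitz theorem together with the reality and localization of the zeros from Theorem~\ref{thm:Ber_sec_zer_real}.

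First I would pass from the scaling point $\alpha n$ to the nearby integer $\lfloor\alpha n\rfloor$. Writing the argument as $w+\lfloor\alpha n\rfloor$ corresponds, in Theorem~\ref{thm:asympt_ber_alpha}, to the substitution $z=w-\{\alpha n\}$, where $\{\alpha n\}=\alpha n-\lfloor\alpha n\rfloor\in[0,1)$. The crucial gain is that the oscillatory argument becomes $\pi z+\pi\alpha n=\pi w+\pi\lfloor\alpha n\rfloor$, so that
\[
 \pi\cos(\pi z+\pi\alpha n)-\tau_{\alpha}\sin(\pi z+\pi\alpha n)=(-1)^{\lfloor\alpha n\rfloor}\bigl[\pi\cos\pi w-\tau_{\alpha}\sin\pi w\bigr],
\]
which no longer carries any genuine $n$-dependence beyond the sign $(-1)^{\lfloor\alpha n\rfloor}$. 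The remaining prefactor $\alpha^{z-1/2}(1-\alpha)^{-z-1/2}$ factors as $\alpha^{w}(1-\alpha)^{-w}=e^{w\tau_{\alpha}}$ times the $w$-independent positive quantity $\alpha^{-\{\alpha n\}-1/2}(1-\alpha)^{\{\alpha n\}-1/2}$. Absorbing all the nonvanishing, $w$-independent (but $n$-dependent) factors into a normalizing constant $C_{n}\neq0$, I would conclude from Theorem~\ref{thm:asympt_ber_alpha} that
\[
 \Phi_{n}(w):=C_{n}B_{n}^{(n)}\bigl(w+\lfloor\alpha n\rfloor\bigr)\longrightarrow\Phi(w):=e^{w\tau_{\alpha}}\bigl[\pi\cos\pi w-\tau_{\alpha}\sin\pi w\bigr]
\]
locally uniformly in $w\in\C$. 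Here I must be slightly careful that the error term $O(1/n)$ stays uniform: for $w$ in a compact set $K$ the corresponding $z=w-\{\alpha n\}$ ranges over the bounded set $K-[0,1)$, so the local uniformity in $z$ of Theorem~\ref{thm:asympt_ber_alpha} does the job.

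Next I would identify the zeros of the limit $\Phi$. Since $e^{w\tau_{\alpha}}$ never vanishes, the zeros solve $\cot\pi w=\tau_{\alpha}/\pi$, i.e.\ $w=m+c$ with $m\in\Z$ and $c:=\tfrac1\pi\arccot\tfrac{\tau_{\alpha}}{\pi}\in(0,1)$; a direct computation of $\Phi'$ at such a point shows each zero is simple. Fix $\ell\in\Z$ and the corresponding zero $w_{0}=\ell-1+c\in(\ell-1,\ell)$. By the Hurwitz theorem~\cite[Thm.~2.5, p.~152]{conway_78}, for every sufficiently small $\varepsilon>0$ the function $\Phi_{n}$ has, for all large $n$, exactly one zero in the disk $D(w_{0},\varepsilon)$. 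The final step is to match this zero with the correctly indexed zero of $B_{n}^{(n)}$: because all zeros of $B_{n}^{(n)}$ are real (Theorem~\ref{thm:Ber_sec_zer_real}), so are those of $\Phi_{n}$, and the Hurwitz zero lies in $(\ell-1,\ell)$; by the interlacing $k-1<x_{k}^{(n)}<k$ the unique zero of $\Phi_{n}$ in $(\ell-1,\ell)$ is exactly $x_{\lfloor\alpha n\rfloor+\ell}^{(n)}-\lfloor\alpha n\rfloor$ (taking $k=\lfloor\alpha n\rfloor+\ell$, which lies in $\{1,\dots,n\}$ for $n$ large). Letting $\varepsilon\to0$ then yields $x_{\lfloor\alpha n\rfloor+\ell}^{(n)}-\lfloor\alpha n\rfloor\to\ell-1+c$, as claimed.

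The main obstacle I anticipate is the oscillatory phase $\pi\alpha n$: a naive application of Hurwitz to a normalization centered at $\alpha n$ fails, because the limit candidate keeps depending on $\{\alpha n\}$ and does not converge. Recentering at $\lfloor\alpha n\rfloor$ and exploiting the parity of $\lfloor\alpha n\rfloor$ is precisely what repairs this, and keeping the $O(1/n)$ error uniform under the $n$-dependent shift $z=w-\{\alpha n\}$ is the one technical point deserving care. Everything else---identifying the zeros of $\Phi$, their simplicity, and the index bookkeeping---is routine given Theorems~\ref{thm:asympt_ber_alpha} and~\ref{thm:Ber_sec_zer_real}.
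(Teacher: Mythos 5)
Your proposal is correct and takes essentially the same route as the paper's proof: recentering the argument at $\lfloor\alpha n\rfloor$ (i.e., replacing $z$ by $z-\alpha n+\lfloor\alpha n\rfloor$ in Theorem~\ref{thm:asympt_ber_alpha}), absorbing the nonvanishing $n$-dependent factors into a normalization, invoking the Hurwitz theorem so that the zeros cluster at the solutions of the secular equation $\cot\pi z=\tau_{\alpha}/\pi$, and using the localization~\eqref{eq:loc_zeros_first} from Theorem~\ref{thm:Ber_sec_zer_real} to match indices and pin down the unique limit point in $(\ell-1,\ell)$. The only differences are presentational: you make explicit the uniformity of the $O(1/n)$ error under the $n$-dependent shift and the simplicity of the zeros of the limit function, points the paper leaves implicit.
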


\begin{proof}
 Replacing $z$ by $z-\alpha n+\lfloor\alpha n\rfloor$ in Theorem~\ref{thm:asympt_ber_alpha}, one deduces the limit formula
 \[
  \lim_{n\to\infty}\frac{(-1)^{n+\lfloor\alpha n\rfloor}\sqrt{n}}{n!\,\alpha^{\lfloor\alpha n\rfloor}(1-\alpha)^{n-\lfloor\alpha n\rfloor}}B_{n}^{(n)}\left(z+\lfloor\alpha n\rfloor\right)=C_{\alpha}(z)\left(\pi\cos\pi z-\tau_{\alpha}\sin\pi z\right)\!,
 \]
 where $C_{\alpha}(z)\neq0$ and the convergence is local uniform in $z\in\C$. By the Hurwitz theorem, the zeros of the polynomial
 \begin{equation}
  z\mapsto B_{n}^{(n)}\left(z+\lfloor\alpha n\rfloor\right)
 \label{eq:ber_subseq_alpha}
 \end{equation}
 cluster at the zeros of the function 
 \[
  z\mapsto \pi\cos\pi z-\tau_{\alpha}\sin\pi z
 \]
 which coincide with the solutions of the secular equation
 \begin{equation}
  \cot\pi z=\frac{\tau_{\alpha}}{\pi}.
  \label{eq:secul_eq}
 \end{equation}
 Further, it follows from~\eqref{eq:loc_zeros_first} that the zeros $x_{\lfloor\alpha n \rfloor+\ell}^{(n)}-\lfloor\alpha n\rfloor$ of~\eqref{eq:ber_subseq_alpha} satisfy
 \[
 \ell-1<x_{\lfloor\alpha n \rfloor+\ell}^{(n)}-\lfloor\alpha n\rfloor<\ell.
 \]
 Consequently, one has
 \[
 \lim_{n\to\infty}\left(x_{\lfloor\alpha n\rfloor+\ell}^{(n)}-\lfloor\alpha n\rfloor\right)=\zeta_{\ell},
 \]
 where $\zeta_{\ell}$ is the unique solution of~\eqref{eq:secul_eq} such that $\zeta_{\ell}\in(\ell-1,\ell)$.  Finally, it suffices to note that $\zeta_{\ell}=\ell-1+\zeta$, where $\zeta$ fulfills
 \[
  \cot\pi\zeta=\frac{\tau_{\alpha}}{\pi} \quad \mbox{ and }\quad \zeta\in(0,1).
 \]
\end{proof}

\begin{example}
 If we put $\alpha=1/3$, then $\tau_{1/3}=-\log 2$. Passing to the subsequences $n_{k}=3k,3k+1,3k+2$, respectively, in Corollary~\ref{cor:lim_zer_alp}, one obtains
 \begin{align*}
  \lim_{k\to\infty}\left(x_{k+\ell}^{(3k)}-k\right)=\lim_{k\to\infty}\left(x_{k+\ell}^{(3k+1)}-k\right)
  =\lim_{k\to\infty}\left(x_{k+\ell}^{(3k+2)}-k\right)&=\ell-1+\frac{1}{\pi}\arccot\left(-\frac{\log 2}{\pi}\right)\\
  &\approx\ell-1.430877,
 \end{align*}
 for any $\ell\in\Z$.
\end{example}

By taking $\alpha=1/2$ and either $n_{k}=2k$ or $n_{k}=2k+1$ in Corollary~\ref{cor:lim_zer_alp}, we get
\[
 \lim_{k\to\infty}\left(x_{k+\ell}^{(2k)}-k\right)=\lim_{k\to\infty}\left(x_{k+\ell}^{(2k+1)}-k\right)
 =\ell-\frac{1}{2},
\]
for $\ell\in\Z$ fixed. Thus, in contrast to the small or large zeros of $B_{n}^{(n)}$ that cluster at integers as shown in Theorem~\ref{thm:asympt_zer_small} and Remark~\ref{rem:asympt_zer_large}, the zeros of $B_{n}^{(n)}$ around the middle point $n/2$ cluster at half-integers as $n\to\infty$. Our next goal is to deduce more precise asymptotic expansions for the middle zeros of $B_{n}^{(n)}$.
To do so, we need to investigate the asymptotic behavior of $B_{n}^{(n)}(z+n/2)$, for $n\to\infty$, more closely.
A complete asymptotic expansion will be obtained by using the classical form of the Laplace method, see~\cite[Sec.~3.7]{olver_97}, applied together with Perron's formula for the expansion coefficients~\cite[p.~103]{wong01} adjusted slightly to our needs.

\begin{lem}[Laplace's method and Perron's formula]\label{lem:laplace_perron}
 Let $f$ be real-valued and $g$ complex-valued continous funtions on~$(0,\infty)$ independent of~$n$. Assume further that $f$ and $g$ are analytic functions at the origin where $f$ has a unique global minimum in $(0,\infty)$. Let the Maclaurin expansions of $f$ and $g$ are of the form
  \[
  f(u)=\sum_{k=0}^{\infty}f_{k}u^{k+2} \quad\mbox{ and }\quad g(u)=\sum_{k=0}^{\infty}g_{k}u^{k+\ell},
  \]
 where $\ell\in\N$ and $f_{0}\neq0$ as well as $g_{0}\neq0$. Then, for $n\to\infty$, one has
  \[
   \int_{0}^{\infty}e^{-nf(u)}g(u)\dd u\sim\sum_{k=0}^{\infty}\Gamma\left(\frac{k+\ell+1}{2}\right)\frac{c_{k}}{n^{(k+\ell+1)/2}}
  \]
  provided that the integral converges absolutely for all $n$ sufficiently large. Perron's formula for the coefficients $c_{k}$ yields
  \[
  c_{k}=\frac{1}{2k!}\frac{\dd^{k}}{\dd u^{k}}\bigg|_{u=0}\frac{g(u)u^{k+1}}{\left(f(u)\right)^{(k+\ell+1)/2}}, \quad k\in\N_{0}.
  \]
\end{lem}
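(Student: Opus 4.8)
The plan is to follow the classical recipe for Laplace's method at an endpoint and then identify the coefficients by turning the extraction of Taylor coefficients into a residue computation, which is exactly the content of Perron's formula. Since $f(u)=\sum_{k\ge0}f_{k}u^{k+2}$ with $f_{0}\neq0$, the origin is a quadratic minimum of $f$ approached from the right, with $f(0)=0$ and $f(u)>0$ on $(0,\infty)$ because this minimum is the unique global one. First I would localize the integral: writing $\int_{0}^{\infty}=\int_{0}^{\delta}+\int_{\delta}^{\infty}$ for a small fixed $\delta>0$ inside the disc of analyticity, the tail $\int_{\delta}^{\infty}e^{-nf(u)}g(u)\dd u$ is negligible beyond all powers of $1/n$. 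Indeed, using the absolute convergence assumed for some $n_{0}$ together with the fact that $f$ stays bounded away from its minimal value on $[\delta,\infty)$, say $\inf_{[\delta,\infty)}f=\eta>0$ (as in the standard formulation, and as holds in our applications where $f\to\infty$), one bounds the tail by $e^{-(n-n_{0})\eta}\int_{\delta}^{\infty}e^{-n_{0}f}|g|\dd u$, which decays exponentially. Hence only a neighborhood of the origin contributes.

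On $(0,\delta)$ I would perform the change of variables $w=\sqrt{f(u)}$. Because $f(u)=f_{0}u^{2}\bigl(1+O(u)\bigr)$ with $f_{0}>0$, the map $u\mapsto w$ is analytic near $0$ with $w(0)=0$ and $w'(0)=\sqrt{f_{0}}\neq0$, so by the analytic inverse function theorem $u=u(w)$ is analytic near $w=0$. Setting $h(w):=g(u(w))\,u'(w)$, so that $g(u)\dd u=h(w)\dd w$, one checks that $h$ is analytic at $w=0$ with Maclaurin expansion $h(w)=\sum_{k\ge0}h_{k}w^{k+\ell}$ and leading coefficient $h_{0}=g_{0}f_{0}^{-(\ell+1)/2}$. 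The reduced integral becomes $\int_{0}^{\sqrt{f(\delta)}}e^{-nw^{2}}h(w)\dd w$, a Gaussian-weighted integral to which a Watson-type argument applies. Using $\int_{0}^{\infty}e^{-nw^{2}}w^{k+\ell}\dd w=\tfrac12\Gamma\!\bigl(\tfrac{k+\ell+1}{2}\bigr)n^{-(k+\ell+1)/2}$ and controlling the error by the Taylor remainder of $h$ — equivalently, by applying Lemma~\ref{lem:Watson} after the substitution $s=w^{2}$, splitting the half-integer powers into their even and odd parts so that each subseries has integer-spaced exponents — yields the claimed expansion with $c_{k}=h_{k}/2$.

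It then remains to put $c_{k}=h_{k}/2$ into closed form, which is Perron's formula. I would write the Taylor coefficient as a contour integral, $h_{k}=\frac{1}{2\pi\ii}\oint_{|w|=r}w^{-(k+\ell+1)}h(w)\dd w$, and transport it back to the $u$-plane through $w=\sqrt{f(u)}$. Since $h(w)\dd w=g(u)\dd u$ and $w^{k+\ell+1}=(f(u))^{(k+\ell+1)/2}$, the circle $|w|=r$ is carried to a simple loop $\gamma$ encircling $u=0$ once, whence
\[
 h_{k}=\frac{1}{2\pi\ii}\oint_{\gamma}\frac{g(u)}{(f(u))^{(k+\ell+1)/2}}\,\dd u=\Res_{u=0}\frac{g(u)}{(f(u))^{(k+\ell+1)/2}}.
\]
The key point is that, although the exponent $(k+\ell+1)/2$ is a half-integer, the integrand is genuinely meromorphic at the origin: factoring $f(u)=f_{0}u^{2}\bigl(1+O(u)\bigr)$ gives $(f(u))^{(k+\ell+1)/2}=f_{0}^{(k+\ell+1)/2}u^{k+\ell+1}Q(u)$ with $Q$ analytic and $Q(0)=1$, since the fractional power acts only on a factor close to $1$ and crosses no branch cut, while $g$ has a zero of order $\ell$; the quotient thus has a pole of order exactly $k+1$. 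Evaluating the residue by the order-$(k+1)$ pole formula gives $h_{k}=\frac{1}{k!}\frac{\dd^{k}}{\dd u^{k}}\big|_{u=0}\frac{g(u)u^{k+1}}{(f(u))^{(k+\ell+1)/2}}$, so that $c_{k}=h_{k}/2$ is precisely the asserted expression.

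The main obstacle I anticipate is the bookkeeping around the fractional power $(f(u))^{(k+\ell+1)/2}$: one must verify that $w=\sqrt{f(u)}$ is a bona fide analytic coordinate near $0$ and that the residue calculation is legitimate in spite of the half-integer exponent, which hinges on the observation that the integer power $u^{2}$ carries the entire singularity while the half-integer power is confined to the analytic, nonvanishing factor $Q$. The remaining analytic points — the exponential smallness of the tail and the term-by-term justification of the Watson estimate, uniform in an auxiliary parameter should the application require it (cf.\ Remark~\ref{rem:uniform_watson}) — are routine.
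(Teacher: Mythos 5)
Your proof is correct, and it follows essentially the route the paper itself points to: the paper does not prove Lemma~\ref{lem:laplace_perron} at all, but presents it as a classical result cited from \cite[Sec.~3.7]{olver_97} and \cite[p.~103]{wong01} (``adjusted slightly to our needs''), and your argument --- localization near the origin, the analytic substitution $w=\sqrt{f(u)}$ (legitimate since $w'(0)=\sqrt{f_{0}}\neq 0$), a Watson-type estimate for the resulting Gaussian integral giving $c_{k}=h_{k}/2$, and the identification of $h_{k}$ as a residue of $g(u)/(f(u))^{(k+\ell+1)/2}$, which is precisely Perron's formula --- is a faithful reconstruction of that classical proof, including the key observation that the half-integer power causes no branching because $u^{2}$ carries the whole singularity of $f$ while the fractional power acts on an analytic nonvanishing factor. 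Two small points deserve emphasis. First, as you yourself flag, the hypothesis ``unique global minimum in $(0,\infty)$'' does not by itself yield $\inf_{[\delta,\infty)}f>0$ (the function could decay to $0$ at infinity), which your exponential tail bound requires; the classical statements impose this explicitly, and it does hold in the paper's applications, where $f(x)=\log\cosh(x/2)$ increases to infinity, so this is a looseness in the lemma's formulation rather than a gap in your argument. Second, the paper actually invokes the lemma with $\ell=0$ (for $I_{1}(n)$) even though it states $\ell\in\N$; your proof, including the even/odd splitting needed to reduce the half-integer-spaced exponents to two applications of Watson's lemma with $\lambda=(\ell+1)/2$ and $\lambda=\ell/2+1$, goes through verbatim for $\ell=0$, so no adjustment is needed.
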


\begin{thm}\label{thm:asympt_ber_alpha1/2}
 For $n\to\infty$, the complete asymptotic expansion 
 \begin{align}
   \frac{(-2)^{n}\sqrt{n}}{n!} B_{n}^{(n)}\left(z+\frac{n}{2}\right)&\sim
   \pi^{1/2}\cos\left(\pi z+\frac{\pi n}{2}\right)\sum_{k=0}^{\infty}\frac{p_{k}(z)}{2^{2k}k!}\frac{1}{n^{k}}\nonumber\\
   &-\pi^{-1/2}\sin\left(\pi z+\frac{\pi n}{2}\right)\sum_{k=0}^{\infty}\frac{q_{k}(z)}{2^{2k+1}k!}\frac{1}{n^{k+1}}
   \label{eq:asympt_ber_alp_half}
 \end{align}
 holds locally uniformly in $z\in\C$. The coefficients $p_{k}$ and $q_{k}$ are polynomials given by the formulas
 \[
  p_{k}(z)=\sum_{j=0}^{k}\binom{2k}{2j}\omega_{j}^{(k)}z^{2k-2j} \quad\mbox{ and } \quad
  q_{k}(z)=\sum_{j=0}^{k}\binom{2k+1}{2j}\omega_{j}^{(k+1)}z^{2k+1-2j},
 \]
 where
 \[
  \omega_{j}^{(k)}=\frac{\dd^{2j}}{\dd x^{2j}}\bigg|_{x=0}\,\frac{x^{2k+1}}{(\pi^{2}+x^{2})\log^{k+1/2}\cosh(x/2)}.
 \]
\end{thm}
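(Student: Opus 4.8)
The plan is to specialize the reduction \eqref{eq:B_n_lin_comb_trig} to $\alpha=1/2$, where $B_{n}^{(n)}(z+n/2)$ is expressed through the Laplace-type integrals $I_{1}(n),I_{2}(n)$ of \eqref{eq:def_I_12}, with phase $f(u)=\log(1+u)-\tfrac12\log u$ from \eqref{eq:def_f} and weights $g_{1},g_{2}$ from \eqref{eq:def_g_12}. The decisive feature of the case $\alpha=1/2$ is the symmetry $f(1/u)=f(u)$; equivalently, the substitution $u=e^{x}$ turns the phase into the even function $f(e^{x})=\log 2+\log\cosh(x/2)$, whose unique minimum lies at the interior point $x=0$. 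Carrying out this substitution in both integrals gives $I_{i}(n)=2^{-n}J_{i}(n)$ with
\[
 J_{1}(n)=\int_{-\infty}^{\infty}\frac{\pi e^{zx}}{(\pi^{2}+x^{2})\cosh^{n}(x/2)}\,\dd x, \qquad J_{2}(n)=\int_{-\infty}^{\infty}\frac{x e^{zx}}{(\pi^{2}+x^{2})\cosh^{n}(x/2)}\,\dd x.
\]
Writing $e^{zx}=\cosh(zx)+\sinh(zx)$ and using that the weight $\pi/(\pi^{2}+x^{2})$ is even while $x/(\pi^{2}+x^{2})$ is odd, the odd contributions integrate to zero; thus $J_{1}$ keeps only $\cosh(zx)$ and $J_{2}$ only $\sinh(zx)$, and each becomes $2\int_{0}^{\infty}$ of an even integrand. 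This even structure is exactly what will convert the half-integer Laplace powers into the integer powers of $1/n$ appearing in \eqref{eq:asympt_ber_alp_half}.

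Next I would apply Lemma~\ref{lem:laplace_perron} to each $\int_{0}^{\infty}$ with phase $\log\cosh(x/2)$ (so $f_{0}=1/8\neq0$): for $J_{1}$ the weight $\pi\cosh(zx)/(\pi^{2}+x^{2})$ is nonzero at the origin (the case $\ell=0$), while for $J_{2}$ the weight $x\sinh(zx)/(\pi^{2}+x^{2})$ vanishes to second order ($\ell=2$). Because both the phase and the relevant weight are even, the function differentiated in Perron's formula is even, so its odd-order derivatives at the origin vanish and $c_{k}=0$ for odd $k$; only the terms with $k=2m$ survive. Inserting $\Gamma(m+\tfrac12)=\sqrt\pi\,(2m)!/(4^{m}m!)$ and $\Gamma(m+\tfrac32)=\sqrt\pi\,(2m+1)!/(2\cdot4^{m}m!)$ then leaves the powers $n^{-m-1/2}$ in $J_{1}$ and $n^{-m-3/2}$ in $J_{2}$. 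The factor $2^{n}$ in $(-2)^{n}$ cancels the $2^{-n}$, and multiplying by $\sqrt n/\pi$ and recombining through \eqref{eq:B_n_lin_comb_trig} produces exactly the $\cos$ series ($\pi^{1/2}\sum p_{k}(z)/(2^{2k}k!)\,n^{-k}$) and the $\sin$ series ($\pi^{-1/2}\sum q_{k}(z)/(2^{2k+1}k!)\,n^{-k-1}$).

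The heart of the argument is identifying the surviving Perron coefficient $c_{2m}$ with the stated polynomials. For $J_{1}$ one has $g(x)x^{2m+1}/f(x)^{(2m+1)/2}=\pi\cosh(zx)\,h_{m}(x)$, where $h_{k}(x)=x^{2k+1}/\big((\pi^{2}+x^{2})\log^{k+1/2}\cosh(x/2)\big)$ is precisely the function whose even Taylor derivatives define $\omega_{j}^{(k)}$; Leibniz's rule together with $\frac{\dd^{2j}}{\dd x^{2j}}\big|_{x=0}\cosh(zx)=z^{2j}$ gives $\frac{\dd^{2m}}{\dd x^{2m}}\big|_{x=0}[\cosh(zx)h_{m}(x)]=p_{m}(z)$. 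For $J_{2}$ one finds $g(x)x^{2m+1}/f(x)^{(2m+3)/2}=x^{-1}\sinh(zx)\,h_{m+1}(x)$, and combining $\frac{\dd^{2j}}{\dd x^{2j}}\big|_{x=0}\,x^{-1}\sinh(zx)=z^{2j+1}/(2j+1)$ with $h_{m+1}$ and the elementary identity $\binom{2m}{2j}/(2m-2j+1)=\binom{2m+1}{2j}/(2m+1)$ yields $\tfrac{1}{2m+1}q_{m}(z)$. I expect the main obstacle to be exactly this bookkeeping, and in particular checking that $h_{k}$ is even and analytic at the origin despite the fractional power: since $\log\cosh(x/2)=\tfrac{x^{2}}{8}(1+O(x^{2}))$, the factor $\log^{k+1/2}\cosh(x/2)$ (with its principal branch on $x>0$) absorbs $x^{2k+1}$ and leaves an even analytic function, which both makes $\omega_{j}^{(k)}$ well defined and justifies the even-derivative evaluation of Perron's formula. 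Local uniformity in $z$ follows as in Remark~\ref{rem:uniform_laplace}, because $\cosh(zx)$ and $\sinh(zx)$ are entire, the coefficients depend analytically on $z$, and the integrals converge uniformly on any compact $K\subset\C$ once $n>2\sup_{z\in K}|\Re z|$.
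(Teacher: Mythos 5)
Your proposal is correct and takes essentially the same route as the paper: the reduction \eqref{eq:B_n_lin_comb_trig} at $\alpha=1/2$, an exponential substitution producing the half-line integrals \eqref{eq:int_I_1_alp_half} and \eqref{eq:int_I_2_alp_half}, and Lemma~\ref{lem:laplace_perron} with $\ell=0$ (resp.\ $\ell=2$), with parity killing the odd Perron coefficients and the Leibniz rule identifying $p_{k}$ and $q_{k}$. The only cosmetic difference is that you substitute over the whole line and then invoke evenness/oddness where the paper splits the integral at $u=1$ first; your explicit verification of the $q_{k}$ identification (via $\binom{2m}{2j}/(2m-2j+1)=\binom{2m+1}{2j}/(2m+1)$) and of the branch issue making $h_{k}$ even and analytic is, if anything, more detailed than the paper's.
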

\begin{rem}
 The first several coefficients $p_{k}$ and $q_{k}$ read
 \begin{align*}
  p_{0}(z)&=\frac{2\sqrt{2}}{\pi^{2}}, \quad p_{1}(z)=\frac{2\sqrt{2}}{\pi^{4}}\left(8\pi^{2}z^{2}+\pi^{2}-16\right)\!,\\
  p_{2}(z)&=\frac{2\sqrt{2}}{\pi^{6}}\left(64\pi^{4}z^{4}+16(5\pi^{2}-48)\pi^{2}z^{2}+\pi^{4}-160\pi^{2}+1536\right)\!,
 \end{align*}
 and
 \begin{align*}
  q_{0}(z)&=\frac{16\sqrt{2}z}{\pi^{2}}, \quad q_{1}(z)=\frac{16\sqrt{2}z}{\pi^{4}}\left(8\pi^{2}z^{2}+5\pi^{2}-48\right)\!,\\
  q_{2}(z)&=\frac{16\sqrt{2}z}{3\pi^{6}}\left(192\pi^{4}z^{4}+80(7\pi^{2}-48)\pi^{2}z^{2}+91\pi^{4}-3360\pi^{2}+23040 \right)\!.
 \end{align*}
\end{rem}

\begin{proof}
 The starting point is the equation~\eqref{eq:B_n_lin_comb_trig} with $\alpha=1/2$:
  \begin{equation}
  (-1)^{n}\frac{\pi}{n!} B_{n}^{(n)}\left(z+\frac{n}{2}\right)=I_{1}(n)\cos\left(\pi z+\frac{\pi n}{2}\right)-I_{2}(n)\sin\left(\pi z+\frac{\pi n}{2}\right)\!,
 \label{eq:B_n_lin_comb_trig_alpha_half}
 \end{equation}
 which holds true if $|\Re z|<n/2$ and where
 \[
  I_{1}(n)=\int_{0}^{\infty}\left(\frac{u^{1/2}}{1+u}\right)^{\!n}\frac{\pi u^{z-1}\,\dd u}{\pi^{2}+\log^{2}u}
  \quad\mbox{ and }\quad I_{2}(n)=\int_{0}^{\infty}\left(\frac{u^{1/2}}{1+u}\right)^{\!n}\frac{u^{z-1}\log u\,\dd u}{\pi^{2}+\log^{2}u}.
 \]
 
 First we split the integral $I_{1}(n)$ into two integrals integrating from $0$ to $1$ in the first one and from $1$ to $\infty$ in the second one. Next, we substitute for $u=e^{-x}$ in the first integral and $u=e^{x}$ in the second one. This results in the formula
 \begin{equation}
  I_{1}(n)=\frac{\pi}{2^{n-1}}\int_{0}^{\infty}\frac{\cosh(xz)}{\pi^{2}+x^{2}}\frac{\dd x}{\cosh^{n}\!\left(x/2\right)}.
 \label{eq:int_I_1_alp_half}
 \end{equation}
 Similarly one shows that
 \begin{equation}
  I_{2}(n)=\frac{1}{2^{n-1}}\int_{0}^{\infty}\frac{x\sinh(xz)}{\pi^{2}+x^{2}}\frac{\dd x}{\cosh^{n}\!\left(x/2\right)}.
  \label{eq:int_I_2_alp_half}
 \end{equation}
 
 To the integral in~\eqref{eq:int_I_1_alp_half}, we may apply Lemma~\ref{lem:laplace_perron} with
 \[
  f(x)=\log\cosh\!\left(\frac{x}{2}\right), \quad g(x)=\frac{\cosh(xz)}{\pi^{2}+x^{2}},
 \]
 and $\ell=0$ getting the expansion
 \begin{equation}
  I_{1}(n)\sim\frac{\pi}{2^{n-1}}\sum_{k=0}^{\infty}\Gamma\left(\frac{k+1}{2}\right)\frac{c_{k}}{n^{(k+1)/2}},
  \label{eq:I_1_from_Laplace_alp_half}
 \end{equation}
 where
 \[
  c_{k}=\frac{1}{2k!}\frac{\dd^{k}}{\dd x^{k}}\bigg|_{x=0}\frac{\cosh(xz)}{\pi^{2}+x^{2}}
  \frac{x^{k+1}}{\log^{(k+1)/2}\cosh(x/2)}.
 \]
 Notice that $c_{2k-1}=0$ for $k\in\N$. Next, by using the Leibnitz rule, one gets
 \[
  c_{2k}=\frac{1}{2(2k)!}\sum_{j=0}^{k}\binom{2k}{2j}\frac{\dd^{2k-2j}}{\dd x^{2k-2j}}\bigg|_{x=0}\!\left(\cosh(xz)\right)\frac{\dd^{2j}}{\dd x^{2j}}\bigg|_{x=0}\frac{x^{2k+1}}{(\pi^{2}+x^{2})\log^{k+1/2}\cosh(x/2)}
 \]
 which yields
 \begin{equation}
  c_{2k}=\frac{1}{2(2k)!}\sum_{j=0}^{2k}\binom{2k}{2j}z^{2k-2j}\omega_{j}^{(k)}=\frac{p_{k}(z)}{2(2k)!},
 \label{eq:c_2k_eq_p_k}
 \end{equation}
 where the notation from the statement has been used.
 
 By substituting from~\eqref{eq:c_2k_eq_p_k} and \eqref{eq:I_1_from_Laplace_alp_half} in the equation~\eqref{eq:B_n_lin_comb_trig_alpha_half}, one arrives at the first asymptotic series on the right-hand side of~\eqref{eq:asympt_ber_alp_half}. In order to deduce the second expansion on the right-hand side of~\eqref{eq:asympt_ber_alp_half}, one proceeds in a similar fashion applying Lemma~\ref{lem:laplace_perron} to the integral~\eqref{eq:int_I_2_alp_half} this time with $\ell=2$. The local uniformity of the expansion can be justified using the analytic dependence of the integrands in~\eqref{eq:int_I_1_alp_half} and~\eqref{eq:int_I_2_alp_half} on~$z$.
\end{proof}

Theorem~\ref{thm:asympt_ber_alpha1/2} allows to compute coefficients in the asymptotic expansion of the zeros of $B^{(n)}_{n}$ located in a fixed distance from $n/2$, for $n\to\infty$, similarly as it was done in Theorem~\ref{thm:asympt_zer_small} for the small zeros based on the asymptotic expansion from Theorem~\ref{thm:Ber_sec_compl_asympt}. Since the proof of the statement below is completely analogous to the proof of~Theorem~\ref{thm:asympt_zer_small} with the only exception that the asymptotic formula of Theorem~\ref{thm:asympt_ber_alpha1/2} is used, it is omitted.

\begin{thm}\label{thm:asympt_middle_zer}
 For any $k\in\Z$, one has
 \[
  x_{n+k}^{(2n)}=n+k-\frac{1}{2}-\frac{2k-1}{\pi^{2}n}-\frac{(2k-1)(\pi^{2}-12)}{2\pi^{4}n^{2}}+O\left(\frac{1}{n^{3}}\right)\!, \quad n\to\infty,
 \]
 and
 \[
  x_{n+k+1}^{(2n+1)}=n+k+\frac{1}{2}-\frac{2k}{\pi^{2}n}+\frac{12k}{\pi^{4}n^{2}}+O\left(\frac{1}{n^{3}}\right)\!, \quad n\to\infty.
 \]
\end{thm}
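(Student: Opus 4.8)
The plan is to follow the argument of Theorem~\ref{thm:asympt_zer_small} verbatim, with the expansion of Theorem~\ref{thm:Ber_sec_compl_asympt} replaced by the complete expansion~\eqref{eq:asympt_ber_alp_half} of Theorem~\ref{thm:asympt_ber_alpha1/2}, and treating the two parities of the degree separately. Writing $N$ for the degree and abbreviating $P(z,N):=\sum_{j=0}^{\infty}\frac{p_{j}(z)}{2^{2j}j!}N^{-j}$ and $Q(z,N):=\sum_{j=0}^{\infty}\frac{q_{j}(z)}{2^{2j+1}j!}N^{-j-1}$, I would first use the periodicity of the trigonometric factors in~\eqref{eq:asympt_ber_alp_half}: for $N=2n$ one has $\cos(\pi z+\pi N/2)=(-1)^{n}\cos\pi z$ and $\sin(\pi z+\pi N/2)=(-1)^{n}\sin\pi z$, whereas for $N=2n+1$ the sine and cosine are interchanged. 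Setting~\eqref{eq:asympt_ber_alp_half} equal to zero at a zero and cancelling the common factor $(-1)^{n}$ then gives the secular equation $\pi\cot\pi z=Q(z,N)/P(z,N)$ when $N=2n$, and $\pi\tan\pi z=-Q(z,N)/P(z,N)$ when $N=2n+1$.

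Next I would localize the relevant zero. By the Hurwitz argument already used in Corollary~\ref{cor:lim_zer_alp}, combined with the interlacing~\eqref{eq:loc_zeros_first}, the zero $x_{n+k}^{(2n)}$ tends to $n+k-1/2$ and $x_{n+k+1}^{(2n+1)}$ tends to $n+k+1/2$. Accordingly I set $z=(k-\tfrac12)+\delta$ in the even case and $z=k+\delta$ in the odd case, where $\delta=\delta(N)\to0$. The periodicity of $\cot$ and $\tan$ then gives $\cot\pi z=-\tan\pi\delta$ and $\tan\pi z=\tan\pi\delta$, so both secular equations collapse to the single master equation $-\pi\tan\pi\delta=Q(z,N)/P(z,N)$, with the base point $k-\tfrac12$ or $k$ understood. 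On the left I expand $\tan\pi\delta=\pi\delta+O(\delta^{3})$; since $\delta=O(1/N)$, the cubic term contributes only beyond order $1/n^{2}$. On the right I Taylor-expand the polynomials $p_{j},q_{j}$ about the base point and collect powers of $1/N$, using the explicit coefficients recorded in the Remark following Theorem~\ref{thm:asympt_ber_alpha1/2}; in the odd case one must additionally re-expand $N^{-j-1}=(2n+1)^{-j-1}$ in powers of $1/n$, which feeds the $1/n^{2}$ term.

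Matching orders then determines $\delta$. The $1/N$ balance involves only $p_{0}$ and $q_{0}$ and reproduces the leading corrections $-(2k-1)/(\pi^{2}n)$ and $-2k/(\pi^{2}n)$. The $1/N^{2}$ balance brings in $p_{1}$ and $q_{1}$ at the base point, the derivative $q_{0}'$ multiplying $\delta_{1}$, and the ratio $p_{1}/p_{0}$ coming from the denominator of $Q/P$ (the derivative $p_{0}'$ dropping out, as $p_{0}$ is constant); after translating back through $x=z+N/2$ it yields the stated $1/n^{2}$ coefficients $-(2k-1)(\pi^{2}-12)/(2\pi^{4})$ and $12k/\pi^{4}$. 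The computation is routine bookkeeping, and the only genuine obstacle is organizing the simultaneous expansion of the quotient $Q/P$, the cotangent singularity carried by $\delta$, and the parity-dependent reindexing $(2n+1)^{-1}\mapsto(2n)^{-1}$ so that no contribution to the $1/n^{2}$ term is lost. No new analytic input is required: the local uniformity of~\eqref{eq:asympt_ber_alp_half} guarantees its validity at the moving zero, so the bootstrap justification is word-for-word that of Theorem~\ref{thm:asympt_zer_small}.
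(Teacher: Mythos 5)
Your proposal is correct and follows exactly the route the paper intends: the paper omits this proof, stating it is completely analogous to that of Theorem~\ref{thm:asympt_zer_small} with the expansion of Theorem~\ref{thm:asympt_ber_alpha1/2} in place of Theorem~\ref{thm:Ber_sec_compl_asympt}, and your secular-equation bootstrap with the parity split, the localization $z\to k-\tfrac12$ (even) and $z\to k$ (odd), and the order-by-order matching is precisely that argument carried out in detail. I verified that your bookkeeping reproduces the stated coefficients, including the $(2n+1)^{-1}$ re-expansion that produces the $12k/\pi^{4}n^{2}$ term in the odd case.
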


\begin{rem}
 Note the difference between the polynomial decay in the asymptotic expansion of the middle zeros of~$B_{n}^{(n)}$ and the logarithmic decay in the asymptotic formulas for the small and large zeros (Theorem~\ref{thm:asympt_zer_small} and Remark~\ref{rem:asympt_zer_large}).
\end{rem}

\begin{rem}
 More detailed expansions for the zeros located most closely to~$n/2$ read
 \begin{align*}
  x_{n}^{(2n)}=n-\frac{1}{2}+\frac{1}{\pi^{2}n}+\frac{\pi^{2}-12}{2\pi^{4}n^{2}}
  &+\frac{3\pi^{4}-100\pi^{2}+720}{12\pi^6n^{3}}\\
  &\hskip48pt+\frac{3\pi^{6}-216\pi^{4}+3856\pi^{2}-20160}{24\pi^8n^{4}}  +O\left(\frac{1}{n^{5}}\right)
 \end{align*}
 and
 \[
  x_{n+2}^{(2n+1)}=n+\frac{3}{2}-\frac{2}{\pi^{2}n}+\frac{12}{\pi^{4}n^{2}}
  -\frac{3\pi^{4}-40\pi^{2}+720}{6\pi^{6}n^{3}}+\frac{14(3\pi^{4}-44\pi^{2}+360)}{3\pi^{8}n^{4}}
  +O\left(\frac{1}{n^{5}}\right)\!,
 \]
 for $n\to\infty$.
\end{rem}

\begin{rem}
 The numbers $D_{2n}^{(2n)}:=4^{n}B_{2n}^{(2n)}(n)$ are known as the N\"{o}rlund $D$-numbers and appear in formulas for a numerical integration, see~\cite[Chp.~8, \S~7]{norlund_24}. Theorem~\ref{thm:asympt_ber_alpha1/2} implies that 
 \[
  (-1)^{n}\frac{\sqrt{2n}}{(2n)!}D_{2n}^{(2n)}\sim\sqrt{\pi}\sum_{k=0}^{\infty}\frac{\omega_{k}^{(k)}}{2^{3k}k!}\frac{1}{n^{k}}, \quad n\to\infty.
 \]
 Explicitly, the first three terms read
 \[
  (-1)^{n}\frac{\sqrt{2n}}{(2n)!}D_{2n}^{(2n)}=\frac{2\sqrt{2}}{\pi^{3/2}} + \frac{\pi^{2}-16}{2\sqrt{2}\pi^{7/2}n} + \frac{\pi^{4}-160\pi^{2}+1536}{32\sqrt{2}\pi^{11/2}n^{2}}+O\left(\frac{1}{n^{3}}\right)\!, \quad n\to\infty.
 \]
\end{rem}

\begin{rem}
 Another special case of Theorem~\ref{thm:asympt_ber_alpha1/2} yields an asymptotic expansion for the coefficients
 \[ 
 K_{2n}:=\frac{1}{(2n)!}B_{2n}^{(2n)}\left(n-\frac{1}{2}\right)
 \]
 appearing in the Gauss--Encke formula~\cite{slavic_ubpefsmf75}. Their complete asymptotic expansion reads
 \begin{align*}
  K_{2n}&\sim\frac{(-1)^{n}}{\sqrt{2\pi}4^{n+1}}\sum_{k=0}^{\infty}\frac{q_{k}\!\left(-1/2\right)}{2^{3k}k!}\frac{1}{n^{k+3/2}}\\
  &=\frac{(-1)^{n+1}}{\sqrt{2\pi}2^{2n+3}}\sum_{k=0}^{\infty}\frac{1}{2^{5k}k!}\left(\sum_{j=0}^{k}\binom{2k+1}{2j}4^{j}\omega_{j}^{(k+1)}\right)\frac{1}{n^{k+3/2}}, \quad n\to\infty,
 \end{align*}
 which explicitly yields
 \[
 K_{2n}=\frac{(-1)^{n+1}}{2^{2n-1}\pi^{5/2}n^{3/2}}\left(1+\frac{7\pi^{2}-48}{8\pi^{2}n}+\frac{3(27\pi^{4}-480\pi^{2}+2560)}{64\pi^{4}n^{2}}+O\left(\frac{1}{n^{3}}\right)\right)\!, \quad n\to\infty.
 \]
 This is a generalization of the asymptotic approximations by Steffensen and Slavi\'{c}, see~\cite{slavic_ubpefsmf75,steffensen_saj24}.
\end{rem}

\subsection{The asymptotic behavior outside the oscilatory region}

We can scale the argument of $B_{n}^{(n)}$ by $n$ and consider the polynomials
$B_{n}^{(n)}(nz)$. Their zeros are located in the interval $(0,1)$ for all $n\in\N$ which follows from~\eqref{eq:loc_zeros_first}. Consequently, the function
$z\mapsto B_{n}^{(n)}(nz)$ oscillates in $(0,1)$ and the formula~\eqref{eq:asympt_ber_alpha} shows the asymptotic behavior in the oscillatory region
\begin{align*}
 B_{n}^{(n)}(nx)=(-1)^{n}&n!\,\sqrt{\frac{2}{\pi n}}\frac{x^{xn-1/2}(1-x)^{(1-x)n-1/2}}{\pi^{2}+\log^{2}\left(x/(1-x)\right)}\\
 &\times\left[\pi\cos(\pi xn)-\log\left(\frac{x}{1-x}\right)\sin(\pi xn)+O\left(\frac{1}{n}\right)\right]\!,
\end{align*}
for $x\in(0,1)$ fixed, as $n\to\infty$. In addition, the asymptotic behavior by the edges $x=0$ and $=1$ can be obtained from Theorem~\ref{thm:Ber_sec_compl_asympt} and the symmetry relation
\begin{equation}
 B_{n}^{(n)}(z)=(-1)^{n}B_{n}^{(n)}(n-z),
\label{eq:B_n_symm}
\end{equation}
which follows from~\eqref{eq:symmetry_id_genBP}.

To complete the picture, it remains to deduce the asymptotic behavior of $B_{n}^{(n)}(nz)$ for $z$ outside the interval $[0,1]$. The asymptotic analysis is based on the following variant of the saddle point method taken from~\cite[Thm.~7.1, Chp.~4]{olver_97}; see also Perron's method in~\cite[Sec.~II.5]{wong01}.

\begin{thm}[the saddle point method]\label{thm:saddle-point}
 Let the following assumptions hold:
 \begin{enumerate}[{\upshape i)}]
  \item Functions $f$ and $g$ are independent of $n$, single valued, and analytic in a region $M\subset\C$.
  \item The integration path~$\gamma$ is independent of $n$ and its range is located in~$M$ with a possible exception of the end-points.
  \item There is a point $\xi_{0}$ located on the path~$\gamma$ which is not an end-point and is such that $f'(\xi_{0})=0$ and $f''(\xi_{0})\neq0$ (i.e., $\xi_{0}$ is a simple saddle point of~$f$).
  \item The integral
  \[
   \int_{\gamma}g(\xi)e^{-n f(\xi)}\dd\xi
  \]
  converges absolutely for all $n$ sufficiently large.
  \item One has
  \[
   \Re\left(f(\xi)-f(\xi_{0})\right)>0,
  \]
  for all $\xi\neq\xi_{0}$ that lie on the range of~$\gamma$.
 \end{enumerate}
 Then 
 \begin{equation}
  \int_{\gamma}g(\xi)e^{-n f(\xi)}\dd\xi=g(\xi_{0})e^{-nf(\xi_{0})}\sqrt{\frac{2\pi}{nf''(\xi_{0})}}\left(1+O\left(\frac{1}{n}\right)\!\right)\!, \quad \mbox{ as } n\to\infty.
 \label{eq:asympt_saddle-point}
 \end{equation}
\end{thm}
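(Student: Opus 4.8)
The plan is to reduce the contour integral to a standard real Gaussian integral by the classical steepest-descent substitution and then read off the asymptotics from Watson's lemma (Lemma~\ref{lem:Watson}). The starting observation is that assumption~(v) makes $\xi_0$ the only place on $\gamma$ where $|e^{-nf(\xi)}|=e^{-n\Re f(\xi)}$ is not exponentially small relative to its value at $\xi_0$. Indeed, on the portion of $\gamma$ outside a fixed neighborhood of $\xi_0$ one has $\Re(f(\xi)-f(\xi_0))\ge\delta>0$ (on a compact subarc by continuity, and on any unbounded parts by factoring $e^{-(n-n_0)\delta}$ out of the integral that converges for some $n_0$ by assumption~(iv)). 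Hence that portion contributes $O(e^{-n\Re f(\xi_0)}e^{-n\delta})$, exponentially smaller than the claimed main term of order $e^{-n\Re f(\xi_0)}n^{-1/2}$, and may be discarded. The whole problem is thereby localized to an arbitrarily small neighborhood of the saddle.

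Next I would introduce the local normal coordinate. Since $f$ is analytic at $\xi_0$ with $f'(\xi_0)=0$ and $f''(\xi_0)\neq0$, we may write $f(\xi)-f(\xi_0)=\tfrac12 f''(\xi_0)(\xi-\xi_0)^2(1+h(\xi))$ with $h$ analytic and $h(\xi_0)=0$, and define $w:=(\xi-\xi_0)\sqrt{\tfrac12 f''(\xi_0)(1+h(\xi))}$ with a branch fixed so that $w$ is analytic near $\xi_0$, $w(\xi_0)=0$, and $w'(\xi_0)=\sqrt{f''(\xi_0)/2}\neq0$. Then $w^2=f(\xi)-f(\xi_0)$ and $\xi\mapsto w$ is a biholomorphism of a neighborhood of $\xi_0$; let $\xi=\xi(w)$ be its inverse. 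In these coordinates the localized integral reads $e^{-nf(\xi_0)}\int_{\gamma_w}G(w)\,e^{-nw^2}\,\dd w$ with $G(w):=g(\xi(w))\,\xi'(w)$ analytic and $G(0)=g(\xi_0)\sqrt{2/f''(\xi_0)}$, where $\gamma_w$ is the image of the subarc of $\gamma$ through $\xi_0$. Crucially, assumption~(v) says $\Re(w^2)>0$ along $\gamma_w\setminus\{0\}$, so $\gamma_w$ enters and leaves the origin inside the two steepest-descent sectors, and the orientation of the square root is chosen so that $\gamma_w$ crosses $0$ in the direction of increasing real~$w$.

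I would then deform $\gamma_w$ onto the real segment $[-r,r]$. Because $G$ is analytic and the connecting arcs can be kept inside $\{\Re(w^2)>0\}$, Cauchy's theorem applies and the deformation costs only endpoint contributions of size $O(e^{-nr^2})$, again exponentially small. What remains is $e^{-nf(\xi_0)}\int_{-r}^{r}G(w)\,e^{-nw^2}\,\dd w$. Splitting at $0$ and substituting $t=w^2$ turns each half into a real Laplace integral $\int_0^{r^2}e^{-nt}\phi(t)\,\dd t$ with $\phi(t)\sim\sum_{m\ge0}b_m t^{m-1/2}$ as $t\to0+$, so Watson's lemma (Lemma~\ref{lem:Watson}, with $\lambda=\tfrac12$) yields a full series in half-integer powers of $n$. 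The even part of $G$ supplies the surviving terms; the leading one is $G(0)\int_{-\infty}^{\infty}e^{-nw^2}\,\dd w=G(0)\sqrt{\pi/n}$, and collecting constants gives exactly $g(\xi_0)e^{-nf(\xi_0)}\sqrt{2\pi/(nf''(\xi_0))}$. Since the odd part of $G$ integrates to $0$ over the symmetric interval, the first correction is of relative order $1/n$, producing the factor $1+O(1/n)$ in~\eqref{eq:asympt_saddle-point}.

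The step I expect to be the real work is the geometric bookkeeping around the saddle: verifying that $\gamma_w$ may be homotoped to the real axis without leaving the domain of analyticity while keeping the connecting arcs in the descent region, and fixing the branch of $\sqrt{f''(\xi_0)}$ so that the orientation of $\gamma$ through $\xi_0$ selects the correct sign in the final constant. The naive computation is ambiguous up to a factor of $\pm1$, and it is precisely assumption~(v) together with the chosen orientation that removes this ambiguity. Controlling the unbounded parts of $\gamma$ uniformly in $n$ via the absolute-convergence hypothesis~(iv) is the other point requiring care, but it is routine once the exponential separation furnished by~(v) is in hand.
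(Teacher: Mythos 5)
Your route is the classical steepest-descent proof, which is also ``the paper's'' proof in the only sense available: the paper does not prove this theorem at all but quotes it from \cite[Chp.~4, Thm.~7.1]{olver_97}, and Olver's proof follows exactly your scheme (localization at the saddle, the normal coordinate $w^{2}=f(\xi)-f(\xi_{0})$, reduction to Watson's lemma via $t=w^{2}$, cancellation of the odd part of $G$, and the sign of $\sqrt{f''(\xi_{0})}$ fixed by the orientation of $\gamma$). Those parts of your argument are correct as written.

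The genuine gap is in your localization step, and it is not a removable technicality. You assert that outside a fixed neighborhood of $\xi_{0}$ one has $\Re\left(f(\xi)-f(\xi_{0})\right)\geq\delta>0$, justified ``on a compact subarc by continuity, and on any unbounded parts by factoring $e^{-(n-n_{0})\delta}$ out''. Continuity gives such a $\delta$ only on compact subarcs; on an unbounded tail of $\gamma$ (or near an endpoint excluded from $M$), hypothesis (v) gives pointwise positivity whose infimum may be $0$, and then no factoring produces the exponential separation your first paragraph relies on. In fact, the theorem \emph{as stated here} is false: take $f(\xi)=\xi^{2}e^{-\xi}$, $g(\xi)=1/(1+\xi^{2})$, $\gamma=[-1,\infty)$ on the real axis, $\xi_{0}=0$. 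Hypotheses (i)--(v) all hold, but for $n$ large and $x\geq 2\log n$ one has $nx^{2}e^{-x}\leq 4n^{-1}\log^{2}n$, hence $e^{-nx^{2}e^{-x}}\geq\tfrac{1}{2}$ there and
\[
\int_{-1}^{\infty}\frac{e^{-nx^{2}e^{-x}}}{1+x^{2}}\,\dd x\;\geq\;\frac{1}{2}\int_{2\log n}^{\infty}\frac{\dd x}{1+x^{2}}\;\sim\;\frac{1}{4\log n},
\]
which dwarfs the predicted main term $\sqrt{\pi/n}$. What saves the theorem is a clause present in Olver's condition (v) but dropped in the paper's restatement: $\Re\left(f(\xi)-f(\xi_{0})\right)$ must also be \emph{bounded away from zero} as $\xi$ approaches either endpoint of $\gamma$ along $\gamma$. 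Once that clause is added, your factoring trick is legitimate, and the remainder of your proof (normal form, deformation of $\gamma_{w}$ inside $\{\Re(w^{2})>0\}$ at exponentially small cost, Watson's lemma with $\lambda=\tfrac{1}{2}$, cancellation of the odd part) closes correctly. Note also that in the paper's sole application, Theorem~\ref{thm:asympt_ber_non-osc}, the path is a closed Jordan curve, whose range is compact and has no endpoints; there pointwise positivity plus compactness do yield the uniform $\delta$, so your argument applies verbatim to the case the paper actually needs.
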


\begin{rem}\label{rem:unif_saddle-point}
 A uniform version of the saddle point method will be used again. In our case, the function $f=f(\cdot,z)$ from Theorem~\ref{thm:saddle-point} will depend analytically on an additional variable $z\in K$ where $K$ is a compact subset of $\C$. In order to conclude that the expansion~\eqref{eq:asympt_saddle-point} holds uniformly in $z\in K$, it suffices to require the assumptions (i)-(iii) and (v) to remain valid for all $z\in K$ and the integral from (iv) to converge absolutely and uniformly in $z\in K$ for all $n$ sufficiently large. The reader is referred to~\cite{neuschel_a12} for a more general uniform version of the saddle point method and the proof.
\end{rem}

Our starting point is the contour integral representation
\begin{equation}
 B_{n}^{(n)}(z)=\frac{n!}{2\pi\ii}\oint_{\gamma}\frac{(1+\xi)^{z-1}}{\xi^{n}\log(1+\xi)}\dd\xi, \quad z\in\C,
\label{eq:Ber_cont_int_repre}
\end{equation}
which follows readily from the generating function formula~\eqref{eq:gener_func_BP_sec} and~\eqref{eq:BP_sec_rel_B}. The curve $\gamma$ can be any Jordan curve with $0$ in its interior not crossing the branch cut $(-\infty,-1]$ of the integrand, i.e., the range of $\gamma$ has to be located in $\C\setminus\left((-\infty,-1]\cup\{0\}\right)$.
The principal branches of the multi-valued functions like the logarithm are chosen if not stated otherwise.
Writing $nz$ instead of $z$ in~\eqref{eq:Ber_cont_int_repre}, the contour integral can be written in the form
\begin{equation}
 B_{n}^{(n)}(nz)=\frac{n!}{2\pi\ii}\oint_{\gamma}g(\xi)e^{-nf(\xi,z)}\dd\xi, \quad z\in\C,
\label{eq:Ber_scaled_cont_int_repre}
\end{equation}
where 
\[
 f(\xi,z)=\log\xi-z\log(1+\xi) \quad \mbox{ and } \quad g(\xi)=\frac{1}{(1+\xi)\log(1+\xi)},
\]
which is suitable for the application of the saddle point method. Without loss of generality, we may restrict $z\in\C\setminus[0,1]$ to the half-plane $\Re z\leq 1/2$ due to the symmetry~\eqref{eq:B_n_symm}.

The assumptions (i),(ii), and (iv) of Theorem~\ref{thm:saddle-point} can be readily checked. Concerning the assumption~(iii), one finds that the point
\[
 \xi_{0}:=\frac{1}{z-1}
\]
is the only solution of $\partial_{\xi}f(\xi,z)=0$ and is simple. 

The most difficult part is the justification of the assumption~(v) of~Theorem~\ref{thm:saddle-point}. The idea is to investigate the level curves in the $\xi$-plane determined by the equation
\[
 \Re f(\xi,z)=\Re f(\xi_{0},z)
\]
with $z\in\C\setminus[0,1]$ and $\Re z\leq 1/2$ being fixed. This level curve, denoted as $\Omega_{0}$, is the common boundary of the two open sets
\[
 \Omega_{\pm}:=\left\{\xi\in\C\setminus\left((-\infty,-1]\cup\{0\}\right) \mid \Re f(\xi,z)\gtrless\Re f(\xi_{0},z)\right\}.
\]
Since $\Re f(\cdot,z)$ is harmonic in $\C\setminus\left((-\infty,-1]\cup\{0\}\right)$, the curves of~$\Omega_{0}$ have no end-point in $\C\setminus\left((-\infty,-1]\cup\{0\}\right)$ and, moreover, they cannot form a loop located in $\C\setminus\left((-\infty,-1]\cup\{0\}\right)$ with its interior; see, for instance,~\cite[Lemma~24 and~26]{shapiro-stampach_ca17}. Hence the only possible loop of~$\Omega_{0}$ encircles the origin (the singularity of $f(\cdot,z)$) and the level curves of $\Omega_{0}$ either goes to~$\infty$ or end at the cut $(-\infty,-1)$. In general, $\Omega_{0}$ need not be connected but the curves of~$\Omega_{0}$ intersect at exactly one point $\xi_{0}$ since it is the only stationary point of $f(\cdot,z)$.

To fulfill the assumption~(v) of~Theorem~\ref{thm:saddle-point}, we have to show that the Jordan curve~$\gamma$ can be homotopically deformed to a Jordan curve which crosses $\xi_{0}$ and is located entirely in~$\Omega_{+}$ with the only exception of the saddle point $\xi_{0}$. The following lemma will be used to justify the assumption~(v).

\begin{lem}\label{lem:justif_v}
 Let $z\in\C\setminus[0,1]$, $\Re z\leq 1/2$, be fixed. For any $\theta\in(-\pi,\pi]$, there exists $\xi\in\C\setminus\left((-\infty,-1]\cup\{0\}\right)$ with $\arg\xi=\theta$ such that $\xi\in\Omega_{0}$.
\end{lem}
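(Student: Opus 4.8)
The plan is to fix the direction $\theta$, parametrize the ray from the origin, and reduce the statement to a one-dimensional intermediate value argument. For $\theta\in(-\pi,\pi)$ I would write $\xi=re^{\ii\theta}$ with $r>0$; this ray stays in $\C\setminus\left((-\infty,-1]\cup\{0\}\right)$, so the real-valued function
\[
 h_{\theta}(r):=\Re f(re^{\ii\theta},z)=\log r-\Re\!\left(z\log(1+re^{\ii\theta})\right)
\]
is continuous on $(0,\infty)$. A point $\xi=re^{\ii\theta}$ lies in $\Omega_{0}$ exactly when $h_{\theta}(r)=\Re f(\xi_{0},z)$, so it suffices to show that the finite number $\Re f(\xi_{0},z)$ lies in the range of $h_{\theta}$.

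First I would read off the boundary behaviour. As $r\to0+$ one has $\log r\to-\infty$ while $\log(1+re^{\ii\theta})\to0$, so $h_{\theta}(r)\to-\infty$ for \emph{every} $\theta$. For $\theta\neq\pi$, expanding $\log(1+re^{\ii\theta})=\log r+\ii\theta+O(1/r)$ as $r\to\infty$ yields $h_{\theta}(r)=(1-\Re z)\log r+\theta\Im z+O(1/r)\to+\infty$, since $\Re z\leq1/2<1$. Hence $h_{\theta}$ assumes every real value and the intermediate value theorem produces the required $\xi\in\Omega_{0}$; this settles all directions $\theta\in(-\pi,\pi)$.

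The direction $\theta=\pi$ is the delicate one, and I expect it to be the main obstacle. Here the admissible part of the ray is the segment $\xi=-r$ with $r\in(0,1)$, on which
\[
 h_{\pi}(r)=\log r-\Re(z)\log(1-r),
\]
which depends on $z$ only through $a:=\Re z$. Since $h_{\pi}(r)\to-\infty$ as $r\to0+$, it suffices to check that $\sup_{r\in(0,1)}h_{\pi}(r)\geq\Re f(\xi_{0},z)$. If $a>0$, then $h_{\pi}(r)\to+\infty$ as $r\to1-$ and the intermediate value theorem again finishes the argument. If $a\leq0$, a direct differentiation shows that $h_{\pi}$ attains its supremum at $r^{*}=1/(1-a)$ (an interior maximum when $a<0$, a boundary limit when $a=0$) with value
\[
 \sup_{r\in(0,1)}h_{\pi}(r)=(a-1)\log(1-a)-a\log(-a).
\]

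It then remains to prove $\Re f(\xi_{0},z)\leq(a-1)\log(1-a)-a\log(-a)$ for $a=\Re z\leq0$. The key observation is that the saddle value is $f(\xi_{0},z)=(z-1)\log(z-1)-z\log z=:F(z)$, an analytic function, and that the right-hand side above is precisely $\Re F$ evaluated at the real point $a$. Thus the claim reduces to $\Re F(z)\leq\Re F(\Re z)$, i.e. that the harmonic function $\Re F$ is maximized along each vertical line $\Re z=a$ at the real axis. This I would establish by computing $F'(z)=\log\frac{z-1}{z}$, so that the derivative of $\Re F$ in the direction of increasing $\Im z$ equals $-\arg\frac{z-1}{z}$; for $a\leq0$ and $\Im z>0$ the point $z-1$ has strictly larger argument than $z$, forcing this derivative to be negative, and the symmetry $\Re F(\bar z)=\Re F(z)$ disposes of $\Im z<0$. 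Consequently $\Re F(z)<\Re F(a)$ whenever $\Im z\neq0$, while for real $z<0$ the saddle $\xi_{0}$ itself lies on the negative real axis so that $\xi_{0}\in\Omega_{0}$ trivially. This verifies the claim for $\theta=\pi$ and completes the lemma.
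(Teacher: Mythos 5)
Your argument is correct, and its skeleton --- an intermediate value argument along each ray, with $\theta=\pi$ singled out and then split according to the sign of $\Re z$ --- is the same as the paper's. The genuine difference is in how the key inequality for $\Re z\le 0$ is proved. Both arguments come down to comparing the maximum of $h_{\pi}$ on $(0,1)$, attained at $r^{*}=1/(1-\Re z)$, with the saddle value $\Re f(\xi_{0},z)$; this is exactly the paper's inequality~\eqref{eq:Ref_ineq_inproof}. The paper proves it by introducing $\chi(z)=\Re\bigl(f\bigl(1/(\Re z-1),z\bigr)-f\bigl(1/(z-1),z\bigr)\bigr)$, showing $\partial\chi/\partial\Re z>0$ when $\Im z\neq0$ and $\chi\to0$ as $\Re z\to-\infty$, i.e.\ by \emph{horizontal} monotonicity from infinity; the boundary case $\Re z=0$ is then treated separately, with its final verification left to the reader as ``elementary analysis.'' You instead observe that both quantities are values of one function: the saddle value is $\Re F(z)$ with $F(z)=(z-1)\log(z-1)-z\log z$, and $\sup_{(0,1)}h_{\pi}=\Re F(\Re z)$. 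You then prove $\Re F(z)<\Re F(\Re z)$ by \emph{vertical} monotonicity, $\partial_{\Im z}\Re F=-\arg\frac{z-1}{z}<0$ in the upper half-plane for $\Re z\le0$, together with conjugation symmetry. This buys you two things: the case $\Re z=0$ is absorbed into the same computation (your $\Re F(z)<\Re F(0)=0$ is precisely the inequality the paper leaves unproved), and you avoid the limiting argument at $\Re z\to-\infty$. It also isolates the function $F$ whose exponential $e^{-nF(z)}$ is exactly the leading factor appearing later in Theorem~\ref{thm:asympt_ber_non-osc}, which is structurally illuminating.

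Two small points of rigor, both harmless. First, your reduction ``it suffices that $\sup_{(0,1)}h_{\pi}\ge\Re f(\xi_{0},z)$'' is literally insufficient when the supremum is not attained, which happens exactly for $\Re z=0$; but since your monotonicity argument yields the \emph{strict} inequality whenever $\Im z\neq0$, the intermediate value theorem still applies and no gap remains --- just state the strict version up front. Second, the identity $f(\xi_{0},z)=F(z)$ needs a word on branches: $\log\frac{z}{z-1}=\log z-\log(z-1)$ does hold on $\C\setminus[0,1]$, while $\log\frac{1}{z-1}=-\log(z-1)$ can fail by an additive $2\pi\ii$ when $z\in(-\infty,0)$; since that discrepancy is purely imaginary, the real-part identity $\Re f(\xi_{0},z)=\Re F(z)$ --- the only thing you use --- holds throughout.
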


\begin{proof}
 We show that $\Omega_{0}$ has a non-empty intersection with any complex ray in $\C\setminus(-\infty,0]$. First, assume $\theta\in(-\pi,\pi)$. Since 
 \[
  \lim_{r\to0-}\Re f\left(re^{\ii\theta},z\right)=-\infty \quad \mbox{ and } \quad \lim_{r\to\infty}\Re f\left(re^{\ii\theta},z\right)=\infty,
 \]
 and $\Re f\left(re^{\ii\theta},z\right)$ is continuous in $r\in(0,\infty)$, there exists $r=r(\theta)>0$ such that $r(\theta)e^{\ii\theta}\in\Omega_{0}$.
 
 Second, we verify that $\Omega_{0}$ intersects the interval $(-1,0)$. If $\Re z>0$, the task is easy since
 \[
  \lim_{x\to-1+}\Re f(x,z)=\infty \quad \mbox{ and } \quad \lim_{x\to0-}\Re f(x,z)=-\infty,
 \]
 and $\Re f(x,z)$ is continuous in $x\in(-1,0)$. 
 
 Suppose $\Re z<0$. Then $1/(\Re z-1)\in(-1,0)$. We show that $1/(\Re z-1)\in\Omega_{+}$, i.e.,
 \begin{equation}
  \Re f\left(\frac{1}{\Re z-1},z\right)>\Re f\left(\frac{1}{z-1},z\right)\!.
 \label{eq:Ref_ineq_inproof}
 \end{equation}
 Then $\Omega_{0}\cap(-1,0)\neq\emptyset$ because $\Omega_{-}$ contains a neighborhood of $0$.
 
 To verify the inequality~\eqref{eq:Ref_ineq_inproof}, we introduce the auxiliary function 
 \[
  \chi(z):=\Re\left(f\left(\frac{1}{\Re z-1},z\right)-f\left(\frac{1}{z-1},z\right)\right)\!
 \]
 and show that $\chi(z)>0$ for $\Re z<0$. Noticing that
 \[
 \frac{\partial}{\partial\xi}\bigg|_{\xi=1/(\Re z-1)}\Re f(\xi,z)=0
 \]
 one computes that
 \[
  \frac{\partial \chi}{\partial \Re z}(z)=\log\left|\frac{z}{z-1}\right|-\log\frac{\Re z}{\Re z-1}.
 \]
 It is easy to check that the above expression is positive if $\Re z<0$ and $\Im z\neq0$. Hence, if $\Im z\neq0$, then $\chi$ is a strictly increasing function of $\Re z\in(-\infty,0)$. Taking also into account that
 \[
  \lim_{\Re z\to-\infty}\chi(z)=0,
 \]
 one infers that $\chi(z)>0$ whenever $\Re z<0$ and $\Im z\neq 0$.
 
 If $\Re z<0$ and $\Im z=0$, then $1/(\Re z -1)$ coincides with the saddle point $\xi_{0}$ and hence $\Omega_{0}$ intersects the interval $(-1,0)$, too. At last, if $\Re z=0$ and $\Im z\neq0$, it suffices to check that
 \[
  \Re f\left(\frac{1}{z-1},z\right)<0,
 \]
 since
 \[
  \lim_{x\to-1+}\Re f(x,z)=0 \quad \mbox{ and } \quad \lim_{x\to0-}\Re f(x,z)=-\infty.
 \]
 The verification of the above inequality is a matter of an elementary analysis.
 \end{proof}
 
 Now, we are at the position to deduce the asymptotic formula for $B_{n}^{(n)}(nz)$, as $n\to\infty$, in the non-oscilatory regime when $z\in\C\setminus[0,1]$.

 \begin{thm}\label{thm:asympt_ber_non-osc}
  One has
  \begin{equation}
   B_{n}^{(n)}(nz)=\frac{n!}{\sqrt{2\pi n}}\frac{(z-1)^{n}}{z\log\left(z/(z-1)\right)}\left(\frac{z}{z-1}\right)^{nz+1/2}\left(1+O\left(\frac{1}{n}\right)\!\right)\!,
  \label{eq:asympt_ber_nonosc}
  \end{equation}
  for $n\to\infty$ locally uniformly in $z\in\C$ bounded away from the interval~$[0,1]$.
 \end{thm}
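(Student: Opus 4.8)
The plan is to feed the contour integral representation~\eqref{eq:Ber_scaled_cont_int_repre} into the saddle point method of Theorem~\ref{thm:saddle-point}, using the simple saddle $\xi_{0}=1/(z-1)$ already identified. Assumptions (i), (ii), (iv) have been noted to hold and (iii) has been checked, so the only thing standing between us and the asymptotic formula is assumption~(v): we must produce a concrete admissible deformation of $\gamma$. First I would record the global shape of the level set $\Omega_{0}$. Since $\Re f(\xi,z)\to-\infty$ as $\xi\to0$, a punctured neighborhood of the origin lies in $\Omega_{-}$; and since, under the standing restriction $\Re z\le1/2$, one has $f(\xi,z)=(1-z)\log\xi+O(1)$ with $\Re(1-z)>0$ as $|\xi|\to\infty$, a neighborhood of infinity lies in $\Omega_{+}$. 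Lemma~\ref{lem:justif_v} supplies the crucial topological fact that $\Omega_{0}$ meets every ray emanating from the origin, so the loop of $\Omega_{0}$ separating these two regimes genuinely encircles the origin, with $\xi_{0}$, the unique self-intersection point, as the gateway through which $\gamma$ must be threaded.

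With this picture in hand the deformation is produced as follows. I would deform $\gamma$ (a Jordan curve with $0$ in its interior) so that near $\xi_{0}$ it follows the local steepest descent directions of $f(\cdot,z)$, which leave $\Omega_{0}$ into $\Omega_{+}$ on both sides of $\xi_{0}$; away from $\xi_{0}$ the curve is routed through the unbounded region $\Omega_{+}$ so as to close up into a Jordan curve encircling the origin and lying entirely in $\Omega_{+}$ apart from the single point $\xi_{0}$. This is possible because $\Omega_{0}$ has no other stationary point and, by the harmonicity argument recalled before the lemma, no loop interior to $\C\setminus((-\infty,-1]\cup\{0\})$ avoiding the origin. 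Homotopy invariance of the contour integral (the integrand is analytic off the cut) then permits replacing $\gamma$ by this curve, so assumption~(v) is satisfied. The locally uniform version (Remark~\ref{rem:unif_saddle-point}) follows by checking that all of this, including absolute convergence of the integral, holds uniformly for $z$ in a compact set bounded away from $[0,1]$.

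It then remains to compute the constants. A direct calculation gives $1+\xi_{0}=z/(z-1)$, whence $e^{-nf(\xi_{0},z)}=\xi_{0}^{-n}(1+\xi_{0})^{nz}=(z-1)^{n}\bigl(z/(z-1)\bigr)^{nz}$, together with $f''(\xi_{0},z)=-1/\xi_{0}^{2}+z/(1+\xi_{0})^{2}=-(z-1)^{3}/z$ and $g(\xi_{0})=(z-1)/\bigl(z\log(z/(z-1))\bigr)$. Substituting these into~\eqref{eq:asympt_saddle-point} and restoring the prefactor $n!/(2\pi\ii)$ from~\eqref{eq:Ber_scaled_cont_int_repre}, the factor $\sqrt{2\pi/(n f''(\xi_{0},z))}=\sqrt{2\pi/n}\,\sqrt{-z/(z-1)^{3}}$ contributes a $\pm\ii$ that cancels the $\ii$ in the prefactor, and collecting the remaining powers of $z$ and $z-1$ produces exactly~\eqref{eq:asympt_ber_nonosc} after writing $z^{1/2}(z-1)^{-1/2}=(z/(z-1))^{1/2}$.

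I expect the main obstacle to be assumption~(v), i.e.\ certifying that the steepest descent deformation really stays in $\Omega_{+}$ globally; this is precisely the content encapsulated in Lemma~\ref{lem:justif_v} and the harmonicity of $\Re f(\cdot,z)$. A secondary but genuinely delicate point is fixing the branch of $\sqrt{2\pi/(nf''(\xi_{0},z))}$: its sign is dictated by the orientation of the deformed contour and the steepest descent direction through $\xi_{0}$, and it must be chosen so that the resulting $\pm\ii$ combines with $1/(2\pi\ii)$ to give the \emph{positive} constant appearing in~\eqref{eq:asympt_ber_nonosc}; the principal branches of the fractional powers in the final formula are then consistent with $z$ bounded away from $[0,1]$.
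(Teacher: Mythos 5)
Your proposal follows the paper's own route essentially step for step: the same contour integral \eqref{eq:Ber_scaled_cont_int_repre}, the same saddle point $\xi_{0}=1/(z-1)$, the same justification of assumption~(v) of Theorem~\ref{thm:saddle-point} via Lemma~\ref{lem:justif_v} together with the harmonicity/no-interior-loop argument (so that a descent contour can hug the outside of the loop of $\Omega_{0}$ encircling the origin, touching $\Omega_{0}$ only at $\xi_{0}$), the same appeal to Remark~\ref{rem:unif_saddle-point} for local uniformity, and the same evaluations $e^{-nf(\xi_{0},z)}=(z-1)^{n}\left(z/(z-1)\right)^{nz}$, $f''(\xi_{0},z)=-(z-1)^{3}/z$, $g(\xi_{0})=(z-1)/\bigl(z\log(z/(z-1))\bigr)$, which indeed assemble into \eqref{eq:asympt_ber_nonosc}.

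The one genuine gap is the sign. Theorem~\ref{thm:saddle-point} leaves the branch of $\sqrt{2\pi/(nf''(\xi_{0},z))}$ unspecified, and your resolution --- that the branch ``must be chosen so that the resulting $\pm\ii$ combines with $1/(2\pi\ii)$ to give the positive constant appearing in \eqref{eq:asympt_ber_nonosc}'' --- assumes the conclusion: with the opposite branch the theorem would assert a different (false) formula, so one cannot fix the branch by matching the target. Your alternative suggestion, reading the sign off the orientation of the contour and the steepest descent direction at $\xi_{0}$, is legitimate in principle but delicate for complex $z$, and you do not carry it out. The paper settles this with a short independent argument: by \eqref{eq:int_val_sign} and the fact that all zeros of $B_{n}^{(n)}$ lie in $(0,n)$, one has $(-1)^{n}B_{n}^{(n)}(x)>0$ for $x<0$; evaluating the candidate leading term at real $z<0$, where every factor has a definite sign, forces the plus sign, and this single choice is then valid throughout the connected region where the locally uniform expansion holds. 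A second, minor omission: you work throughout under the ``standing restriction'' $\Re z\leq 1/2$ but never remove it; the paper's proof ends by invoking the symmetry \eqref{eq:B_n_symm} to transfer the formula to $\Re z>1/2$, and your write-up should do the same.
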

 
 \begin{rem}
  Note that $z/(z-1)\in(-\infty,0)$ if and only if $z\in(0,1)$. Hence the
  leading term in the asymptotic formula~\eqref{eq:asympt_ber_nonosc} is an analytic function of $z$ in $\C\setminus[0,1]$ with a branch cut~$[0,1]$.
 \end{rem}
 
 \begin{proof}
  Assume first that $z\in\C\setminus[0,1]$ and $\Re z\leq1/2$. For the application of Theorem~\ref{thm:saddle-point} to the contour integral in~\eqref{eq:Ber_scaled_cont_int_repre}, it remains to justify the crucial assumption~(v). Lemma~\ref{lem:justif_v} implies that the level curve set~$\Omega_{0}$ includes a loop which encircles the origin, intersects the interval $(-1,0)$ and the saddle point $\xi_{0}$. Moreover, the interior of this loop 
   is a subset of $\Omega_{-}$. Indeed, no other loop can separate the interior because in such a case there would be a domain in which $f(\cdot;z)$ is analytic and $\Re f(\cdot;z)$ constant on the boundary of this domain. This would imply that $f$ is a constant by general principles.
   
  The only crossing of curves in~$\Omega_{0}$ occurs at the point~$\xi_{0}$ where exactly two curves crosses at an angle of $\pi/2$ since the saddle point~$\xi_{0}$ is simple. Two of the out-going arcs encloses into the loop around the origin. The remaining two arcs either continue to $\infty$ or end at the cut~$(-\infty,-1]$. These curves cannot cross the loop at another point different from $\xi_{0}$ since $\xi_{0}$ is the only stationary point of $f(\cdot,z)$. Thus, with the only exception of the point~$\xi_{0}$, a right neighborhood of the loop, if transversed in the counter-clockwise orientation, is a subset of~$\Omega_{+}$; see Figure~\ref{fig:assum_v}.
  
  As a result, one observes that there exists a Jordan curve with $0$ in its interior, crossing the interval $(-1,0)$ and entirely located in the set~$\Omega_{+}$ except the only point~$\xi_{0}$ which belong to the image of this curve. This is the possible choice for the descent path satisfying the assumption~(v) of~Theorem~\ref{thm:saddle-point} into which the curve~$\gamma$ in~\eqref{eq:Ber_scaled_cont_int_repre} can be homotopically deformed.
  
  The asymptotic formula~\eqref{eq:asympt_ber_nonosc} follows from the application of Theorem~\ref{thm:saddle-point} and is determined up to a sign since the branch of the square root in the asymptotic formula~\eqref{eq:asympt_saddle-point} has not been specified. The local uniformity of the expansion can be justified using the fact that the integrand of~\eqref{eq:Ber_scaled_cont_int_repre} depends analytically on~$z$, see Remark~\ref{rem:unif_saddle-point}.
  
  Using~\eqref{eq:int_val_sign} and the fact that all zeros of $B_{n}^{(n)}$ are positive, we get $(-1)^{n}B_{n}^{(n)}(x)>0$ if $x<0$. By inspection of the obtained asymptotic formula~\eqref{eq:asympt_ber_nonosc}, one shows the correct choice of the sign is plus that results in~\eqref{eq:asympt_ber_nonosc}. Finally, if $z\in\C\setminus[0,1]$ with $\Re z>1/2$, one uses the symmetry~\eqref{eq:B_n_symm} together with the already obtained asymptotic formula extending the validity of~\eqref{eq:asympt_ber_nonosc} to all $z\in\C\setminus[0,1]$.
\end{proof}

\begin{figure}[htb!]
  \includegraphics[width=0.9\textwidth]{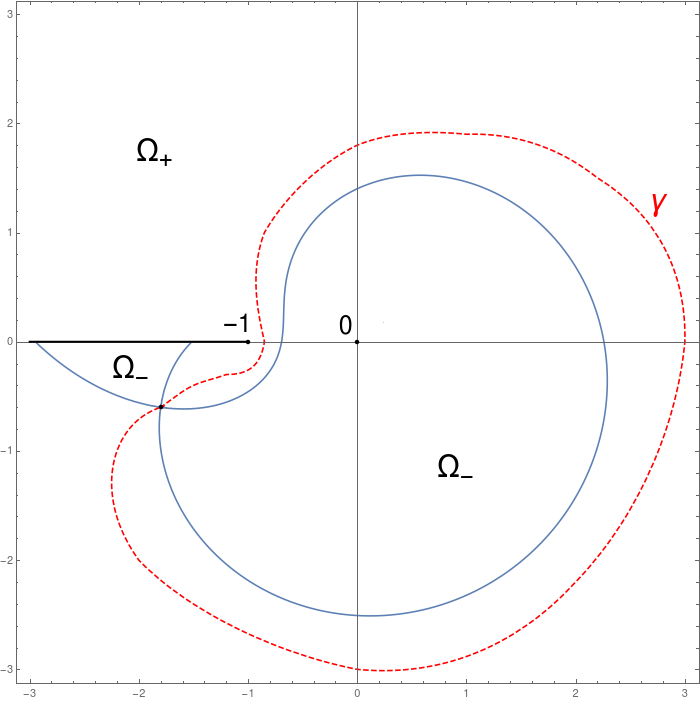}
        \caption{An illustration of the level curves $\Omega_{0}$ (blue solid line) and a possible choice of the curve $\gamma$ that fulfill the assumption~(v) of Theorem~\ref{thm:saddle-point} (red dashed line) for $z=1/2+\ii/6$.}
        \label{fig:assum_v}
\end{figure}

\begin{rem}\label{rem:asympt_zero_distr}
 It is not very surprising that the sequence of zero-counting measures of $B_{n}^{(n)}(nz)$, i.e., the uniform probability measures supported on the roots of $B_{n}^{(n)}(nz)$:
 \[
  \mu_{n}=\frac{1}{n}\sum_{k=1}^{n}\delta_{x_{k}^{(n)}/n},
 \]
 converges weakly to the uniform probability measure supported on the interval $[0,1]$. This can be verified by using the Cauchy transform of~$\mu_{n}$ and the asymptotic formula~\eqref{eq:asympt_ber_nonosc}. Indeed, for $z\in\C\setminus[0,1]$, one has
 \[
  \lim_{n\to\infty}\int_{\C}\frac{\dd\mu_{n}(\xi)}{z-\xi}=\lim_{n\to\infty}\frac{\partial}{\partial z} \log B_{n}^{(n)}(nz)=\log\frac{z}{z-1}=:C_{\mu}(z).
 \]
 Now, the Stieltjes--Perron inversion formula implies that the sequence $\{\mu_{n}\}$ converges weakly to the absolutely continuous measure supported on~$[0,1]$ whose density reads
 \[
  \frac{\dd\mu}{\dd x}(x)=\lim_{\epsilon\to0+}\frac{1}{\pi}\Im C_{\mu}(x-\ii\epsilon)=1,
 \]
for $x\in[0,1]$.
\end{rem}

\section{Final remarks and open problems}

In the end, after a short remark concerning the Euler polynomials of the second kind, we indicate several research problems related to the Bernoulli polynomials of the second kind. These interesting problems appeared during the work on this paper and remained unsolved. 

\subsection{A remark on the Euler polynomials of the second kind}

The Bernoulli polynomials are often studied jointly with the Euler polynomials since they share many similar properties~\cite{norlund_24}. The Euler polynomials of higher order are defined by the generating function
\[
 \sum_{n=0}^{\infty}E_{n}^{(a)}(x)\frac{t^{n}}{n!}=\left(\frac{2}{1+e^{t}}\right)^{a}e^{xt}.
\]
The second identity in~\eqref{eq:ber_a_ident1} remains valid at the same form even if $B_{n}^{(a)}$ is replaced by $E_{n}^{(a)}$. On the other hand, there is no simple expression for $E_{n-1}^{(n)}$ comparable with~\eqref{eq:B_n-1_n_explicit}. Hence, following the same steps as in the case of the polynomials~$B_{n}^{(n)}$ that resulted in~\eqref{eq:int_repre_simple}, one cannot deduce a simple integral expression for $E_{n}^{(n)}$. The simple integral formula~\eqref{eq:int_repre_simple} for $B_{n}^{(n)}$ was the crucial ingredience that allowed to obtain the most important results of this paper. No similar integral formula for $E_{n}^{(n)}$ is known to the best knowledge of the author. Moreover, numerical experiments indicate that the zeros of $E_{n}^{(n)}$ are not all real.

\subsection{Open problem: Alternative proofs of the reality of zeros}

Althogh the proof of the reality of zeros of~$B_{n}^{(n)}$ used in Theorem~\ref{thm:Ber_sec_zer_real} is elementary, it would be interesting to find another proof that would not be based on the particular values of $B_{n}^{(n)}$.

One way of proving the reality of zeros of a polynomial is 
based on finding a Hermitian matrix whose characteristic polynomial coincides
with the studied polynomial. This is a familiar fact, for example, for orthogonal
polynomials that are characteristic polynomials of Jacobi matrices. Since various recurrence formulas are known for the generalized Bernoulli polynomials, one may believe that there exists a Hermitian matrix $A_{n}$ with explicitly expressible elements such that $B_{n}^{(n)}(x)=\det(x-A_{n})$. We would like to stress that no such matrix was found.

Concerning this problem, one can, for instance, show the linear recursion
\[
 B_{n+1}^{(n+1)}(x)=(x-n)B_{n}^{(n)}(x)-\sum_{k=0}^{n}\binom{n}{k}\frac{b_{n-k+1}}{n-k+1}B_{k}^{(k)}(x), \quad n\in\N_{0}.
\]
by making use of~\eqref{eq:gener_func_BP_sec}, where $b_{n}:=b_{n}(0)=B_{n}^{(n)}(1)$. As a consequence, $B_{n+1}^{(n+1)}$ is the characteristic polynomials of the lower Hessenberg matrix
\[
 A_{n}=\begin{pmatrix}
        b_{1} & 1 & 0 & 0 & \dots & 0 \\
        \binom{1}{0}\frac{1}{2}b_{2} & b_{1}+1 & 1 & 0 & \dots & 0 \\
        \binom{2}{0}\frac{1}{3}b_{3} & \binom{2}{1}\frac{1}{2}b_{2} & b_{1}+2 & 1 & \dots & 0 \\
        \vdots & \vdots & \vdots & \vdots & \dots & \vdots \\
        \binom{n}{0}\frac{1}{n+1}b_{n+1} & \binom{n}{1}\frac{1}{n}b_{n} & \binom{n}{2}\frac{1}{n-1}b_{n-1} & \binom{n}{3}\frac{1}{n-2}b_{n-2} & \dots & b_{1}+n
       \end{pmatrix}\!.
\]
However, $A_{n}$ is clearly not Hermitian and the reality (and simplicity) of its eigenvalues is by no means obvious.

\subsection{Open problem: Beyond the reality of the zeros, a positivity}

A certain positivity property of a convolution-like sums with generalized Bernoulli polynomials seems to hold. This positivity would imply the reality of zeros of~$B_{n}^{(n)}$. 

It follows from the operational (umbral) calculus that~\cite[p.~94]{roman84}
\[
 B_{n}^{(a)}(x+y)=\sum_{k=0}^{n}\binom{n}{k}B_{n-k}^{(a)}(x)y^{k},
\]
for any $x,y,a\in\C$ and $n\in\N_{0}$. With the aid of the above formula, one can verify the identity
\begin{equation}
 |B_{n}^{(n)}(x+\ii y)|^{2}=|B_{n}^{(n)}(x)|^{2}+\sum_{k=1}^{\lfloor\frac{n}{2}\rfloor} \alpha_{n,k}(x)y^{2k} + \sum_{k=0}^{\lfloor\frac{n-1}{2}\rfloor}\beta_{n,k}(x) y^{2n-2k},
\label{eq:B_n_n_real_complex}
\end{equation}
where
\[
 \alpha_{n,k}(x):=(-1)^{k}\sum_{l=0}^{2k}(-1)^{l}\binom{n}{l}\binom{n}{2k-l}B_{n-l}^{(n)}(x)B_{n-2k+l}^{(n)}(x)
\]
and
\[
 \beta_{n,k}(x):=(-1)^{k}\sum_{l=0}^{2k}(-1)^{l}\binom{n}{l}\binom{n}{2k-l}B_{l}^{(n)}(x)B_{2k-l}^{(n)}(x).
\]

\begin{conjecture}
 For all $x\in\R$ and $n\in\N$, one has 
 \[
  \alpha_{n,k}(x)\geq0,\; \mbox{ for } \; 1\leq 2k\leq n,
 \;\;\mbox{ and }\;\; 
   \beta_{n,k}(x)\geq0,\; \mbox{ for } \; 0\leq 2k\leq n-1.
 \]
\end{conjecture}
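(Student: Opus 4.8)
The plan is to recognize the two families $\alpha_{n,k}(x)$ and $\beta_{n,k}(x)$ for what they are. Regrouping the right-hand side of~\eqref{eq:B_n_n_real_complex} by powers of~$y$, they are simply the coefficients of the even real polynomial $y\mapsto|B_{n}^{(n)}(x+\ii y)|^{2}$. Indeed, writing $a_{l}:=\binom{n}{l}B_{n-l}^{(n)}(x)$, the umbral expansion recalled just before the conjecture gives $B_{n}^{(n)}(x+\ii y)=\sum_{l=0}^{n}a_{l}(\ii y)^{l}$ with all $a_{l}$ real, whence $|B_{n}^{(n)}(x+\ii y)|^{2}=\sum_{l,l'}a_{l}a_{l'}\ii^{\,l-l'}y^{l+l'}$. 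Collecting $l+l'=2k$ shows the coefficient of $y^{2k}$ to be $(-1)^{k}\sum_{l=0}^{2k}(-1)^{l}a_{l}a_{2k-l}=\alpha_{n,k}(x)$, while the substitution $l\mapsto n-l$ identifies the coefficient of $y^{2n-2k}$ with $\beta_{n,k}(x)$. Since the prescribed index ranges tile the even powers $y^{0},y^{2},\dots,y^{2n}$ exactly once, the conjecture is equivalent to the single assertion that every coefficient of $y\mapsto|B_{n}^{(n)}(x+\ii y)|^{2}$ is nonnegative for each fixed $x\in\R$.

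Granting Theorem~\ref{thm:Ber_sec_zer_real} and the monicity of $B_{n}^{(n)}$ (both read off from~\eqref{eq:int_repre_simple}), I would then factor $B_{n}^{(n)}(z)=\prod_{k=1}^{n}(z-x_{k}^{(n)})$ with all $x_{k}^{(n)}\in\R$, so that
\[
 |B_{n}^{(n)}(x+\ii y)|^{2}=\prod_{k=1}^{n}\left[(x-x_{k}^{(n)})^{2}+y^{2}\right]\!.
\]
Each factor is a polynomial in $y^{2}$ with nonnegative coefficients $(x-x_{k}^{(n)})^{2}$ and $1$, and a product of such polynomials again has nonnegative coefficients. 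Expanding the product shows the coefficient of $y^{2k}$ to be the elementary symmetric polynomial $e_{n-k}$ evaluated at the nonnegative numbers $(x-x_{1}^{(n)})^{2},\dots,(x-x_{n}^{(n)})^{2}$. Matching with the first paragraph gives $\alpha_{n,k}(x)=e_{n-k}\geq0$ and $\beta_{n,k}(x)=e_{k}\geq0$, which is precisely the claim.

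The genuine difficulty, and the reason the statement is posed as an open problem rather than a corollary, is that this argument uses the reality of the zeros, whereas the intended role of the positivity is to \emph{establish} reality independently (cf.\ the remark preceding the conjecture). A reality-free proof cannot be merely formal: for a real polynomial with a non-real zero $a+\ii b$, $b\neq0$, the corresponding factor $(x-a)^{2}+(y-b)^{2}$ carries a term linear in~$y$, and the resulting $y^{2}$-coefficient of $|p(x+\ii y)|^{2}$ can be strictly negative --- already $p(z)=z^{2}+1$ yields the coefficient $2(x^{2}-1)<0$ for $|x|<1$. Thus positivity is a genuine constraint, consistent with the observation that it would force reality, and any direct proof would have to exploit specific structure of $B_{n}^{(n)}$ --- for instance the double integral $|B_{n}^{(n)}(x+\ii y)|^{2}=\int_{0}^{1}\!\int_{0}^{1}\prod_{k=1}^{n}(x+s-k+\ii y)(x+t-k-\ii y)\,\dd s\,\dd t$ arising from~\eqref{eq:int_repre_simple}, or the convolution structure of the sums defining $\alpha_{n,k}$ and $\beta_{n,k}$ --- rather than appealing to Theorem~\ref{thm:Ber_sec_zer_real}. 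Pinning down the termwise sign of these convolution sums without first factoring is, I expect, the main obstacle.
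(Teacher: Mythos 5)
There is no proof in the paper to compare against: this statement is posed as a conjecture, explicitly labelled an open problem, and the paper never proves it. So the relevant question is whether your argument is sound and what it actually accomplishes. As a proof of the literal statement it is correct: your identification of $\alpha_{n,k}(x)$ and $\beta_{n,k}(x)$ with the coefficients of $y^{2k}$ and $y^{2n-2k}$ in $|B_{n}^{(n)}(x+\ii y)|^{2}$ checks out (including the index bookkeeping and the substitution $l\mapsto n-l$), and once Theorem~\ref{thm:Ber_sec_zer_real} and monicity give the factorization $|B_{n}^{(n)}(x+\ii y)|^{2}=\prod_{j=1}^{n}\left[(x-x_{j}^{(n)})^{2}+y^{2}\right]$, every coefficient is an elementary symmetric polynomial in the nonnegative quantities $(x-x_{j}^{(n)})^{2}$, hence nonnegative. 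Combined with the implication already noted in the paper (positivity plus $\beta_{n,0}=1$ forces reality of the zeros), your argument shows the conjecture is logically \emph{equivalent} to Theorem~\ref{thm:Ber_sec_zer_real}, so as a bare statement it is true and not genuinely open. What remains open --- and what the author plainly intends, given the section title and the remark that the positivity ``would imply the reality of zeros'' --- is a direct, termwise proof of the nonnegativity of the convolution sums $\alpha_{n,k}$, $\beta_{n,k}$ that does not route through the zeros, which would then furnish the desired alternative proof of reality. You identify this correctly as the real difficulty, and your example $p(z)=z^{2}+1$, where the $y^{2}$-coefficient $2(x^{2}-1)$ is negative for $|x|<1$, rightly shows that such positivity is not a formal property of real polynomials but precisely encodes reality of the zeros. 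In short: your proof is valid for the statement as written, it is necessarily circular with respect to the paper's motivation, and you have been appropriately explicit about that.
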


If the above conjecture holds true, one would obtain from~\eqref{eq:B_n_n_real_complex}, for example, the inequality
\[
 |B_{n}^{(n)}(x+\ii y)|^{2}\geq\left(B_{n}^{(n)}(x)\right)^{\!2}+y^{2n},
\]
where we used that $\beta_{n,0}(x)=1$. From this inequality, the reality of zeros of $B_{n}^{(n)}$ immediately follows.

\subsection{Open problem: A transition between the asymptotic zero distributions of the Bernoulli polynomials of the first and second kind}

The asymptotic zero distribution~$\mu_{0}$ for Bernoulli polynomials of the first kind $B_{n}=B_{n}^{(1)}$ was found by Boyer and Goh in~\cite{boyer-goh_aam07}. It is a weak limit of the sequence of the zero-counting measures associated with the polynomials~$B_{n}$. The measure~$\mu_{0}$ is absolutely continuous, supported on certain arcs of analytic curves in $\C$, and its density is also described in~\cite{boyer-goh_aam07}. On the other hand, the asymptotic zero distribution~$\mu_{1}$ of the polynomials $B_{n}^{(n)}$ is simply the uniform probability measure supported on the interval $[0,1]$, see Remark~\ref{rem:asympt_zero_distr}.

The two measures $\mu_{0}$ and $\mu_{1}$ can be viewed as two extreme points of the asymptotic zero distribution $\mu_{\lambda}$ of the polynomials
\begin{equation}
 z\mapsto B_{n}^{(1-\lambda+\lambda n)}(nz),
\label{eq:ber_lam}
\end{equation}
where $\lambda\in[0,1]$. The order of the above generalized Bernoulli polynomial is nothing but the convex combination of $1$ and $n$. The measures $\mu_{\lambda}$ seem to be absolutely continuous and continuously dependent on~$\lambda$. An interesting research problem would be to describe the support (the zero attractor) as well as the density of $\mu_{\lambda}$ for $\lambda\in(0,1)$. For an illustration of approximate supports of $\mu_{\lambda}$ for several values of~$\lambda$, see Figure~\ref{fig:trans_lam}.

\begin{figure}[htb!]
  \includegraphics[width=0.99\textwidth]{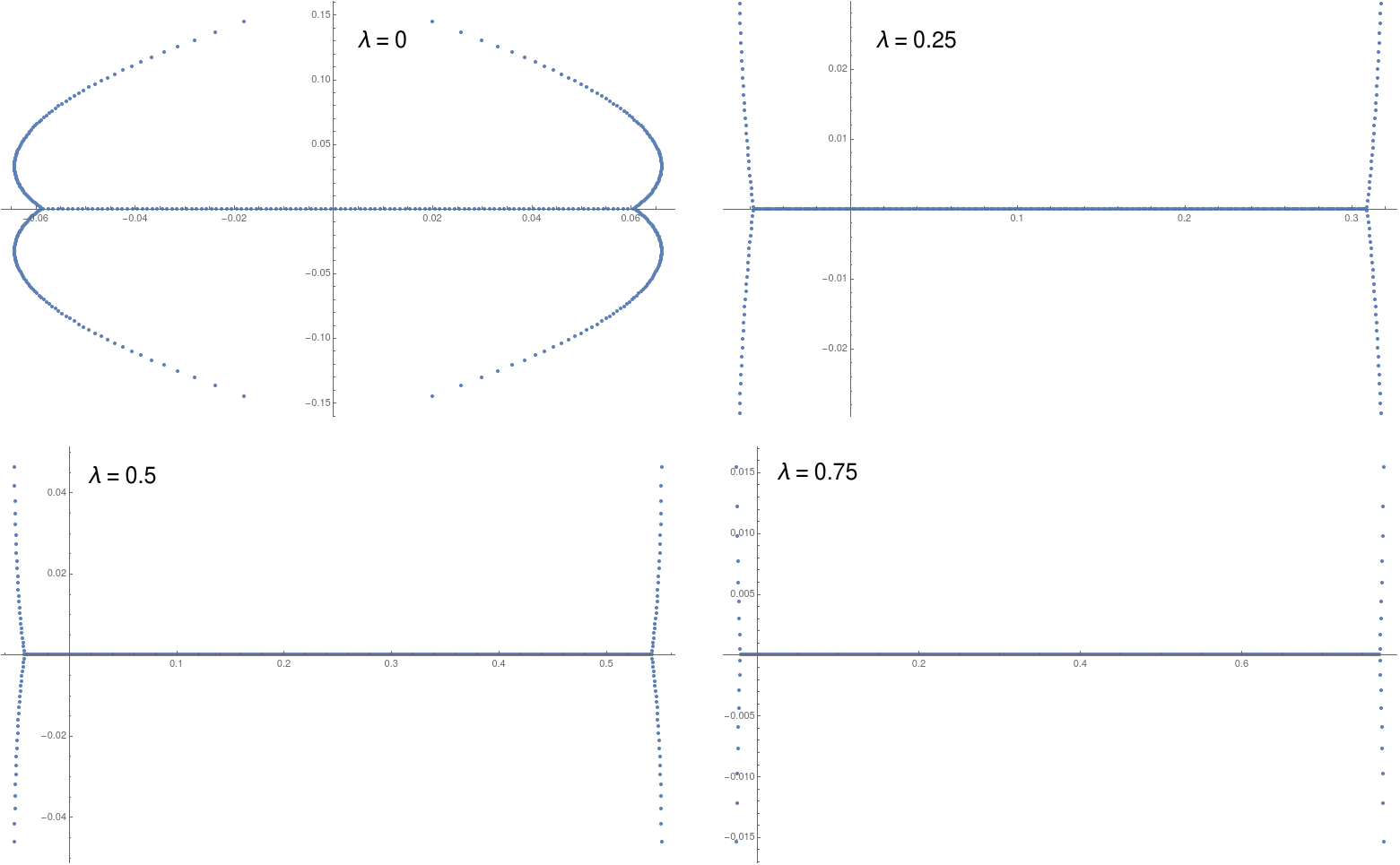}
        \caption{Plots of the zeros of the polynomials~\eqref{eq:ber_lam} in the complex plane for $n=500$ and  $\lambda\in\{0,1/4,1/2,3/4\}$ illustrating how the zero attractor of the Bernoulli polynomials (case $\lambda=0)$ deforms into the interval~$[0,1]$ (case $\lambda=1$) as $\lambda$ changes from $0$ to $1$.} 
        \label{fig:trans_lam}
\end{figure}

\subsection{Open problem: A uniform asymptotic expansion of $B_{n}^{(n)}(nx)$ for all $x\in[0,1]$}

As pointed out by an anonymous referee, it would be also an interesting open problem to find an asymptotic approximation for $B_{n}^{(n)}(nx)$ that holds uniformly for $x\in[0,1]$. Such approximation, if exists, should involve some interesting special functions reflecting the behavior of $B_{n}^{(n)}(nx)$ near the edge points $x=0$ and $x=1$.

\section*{Acknowledgement}
The author is grateful to two referees for careful reports that improved the paper.
Further, the author acknowledges financial support by the Ministry of Education, Youth and Sports of the Czech Republic 
project no. CZ.02.1.01/0.0/0.0/16\_019/0000778.

\bibliographystyle{acm}

\begin{thebibliography}{10}

\bibitem{blagouchine_jis17}
{\sc Blagouchine, I.~V.}
\newblock A note on some recent results for the {B}ernoulli numbers of the
  second kind.
\newblock {\em J. Integer Seq. 20}, 3 (2017), Art. 17.3.8, 7.

\bibitem{blagouchine_i18}
{\sc Blagouchine, I.~V.}
\newblock Three notes on {S}er's and {H}asse's representations for the
  zeta-functions.
\newblock {\em Integers 18A\/} (2018), Paper No. A3, 45.

\bibitem{boyer-goh_aam07}
{\sc Boyer, R., and Goh, W. M.~Y.}
\newblock On the zero attractor of the {E}uler polynomials.
\newblock {\em Adv. in Appl. Math. 38}, 1 (2007), 97--132.

\bibitem{brillhart-jram69}
{\sc Brillhart, J.}
\newblock On the {E}uler and {B}ernoulli polynomials.
\newblock {\em J. Reine Angew. Math. 234\/} (1969), 45--64.

\bibitem{carlitz-sm61}
{\sc Carlitz, L.}
\newblock A note on {B}ernoulli and {E}uler polynomials of the second kind.
\newblock {\em Scripta Math. 25\/} (1961), 323--330.

\bibitem{conway_78}
{\sc Conway, J.~B.}
\newblock {\em Functions of one complex variable}, second~ed., vol.~11 of {\em
  Graduate Texts in Mathematics}.
\newblock Springer-Verlag, New York-Berlin, 1978.

\bibitem{delange_86}
{\sc Delange, H.}
\newblock Sur les z\'{e}ros r\'{e}els des polyn\^{o}mes de {B}ernoulli.
\newblock {\em C. R. Acad. Sci. Paris S\'{e}r. I Math. 303}, 12 (1986),
  539--542.

\bibitem{delange_91}
{\sc Delange, H.}
\newblock Sur les z\'{e}ros r\'{e}els des polyn\^{o}mes de {B}ernoulli.
\newblock {\em Ann. Inst. Fourier (Grenoble) 41}, 2 (1991), 267--309.

\bibitem{dilcher_jat87}
{\sc Dilcher, K.}
\newblock Asymptotic behaviour of {B}ernoulli, {E}uler, and generalized
  {B}ernoulli polynomials.
\newblock {\em J. Approx. Theory 49}, 4 (1987), 321--330.

\bibitem{dilcher_aa08}
{\sc Dilcher, K.}
\newblock On multiple zeros of {B}ernoulli polynomials.
\newblock {\em Acta Arith. 134}, 2 (2008), 149--155.

\bibitem{dlmf}
{\it NIST Digital Library of Mathematical Functions}.
\newblock http://dlmf.nist.gov/, Release 1.0.17 of 2017-12-22.
\newblock F.~W.~J. Olver, A.~B. {Olde Daalhuis}, D.~W. Lozier, B.~I. Schneider,
  R.~F. Boisvert, C.~W. Clark, B.~R. Miller and B.~V. Saunders, eds.

\bibitem{edwards-leeming_jat12}
{\sc Edwards, R., and Leeming, D.~J.}
\newblock The exact number of real roots of the {B}ernoulli polynomials.
\newblock {\em J. Approx. Theory 164}, 5 (2012), 754--775.

\bibitem{efimov_fm08}
{\sc Efimov, A.~I.}
\newblock The asymptotics for the number of real roots of the {B}ernoulli
  polynomials.
\newblock {\em Forum Math. 20}, 2 (2008), 387--393.

\bibitem{guo_etal-rmjm16}
{\sc Guo, B.-N., Mez\H{o}, I., and Qi, F.}
\newblock An explicit formula for {B}ernoulli polynomials in terms of
  {$r$}-{S}tirling numbers of the second kind.
\newblock {\em Rocky Mountain J. Math. 46}, 6 (2016), 1919--1923.

\bibitem{gupta_prabhakar-ijpam80}
{\sc Gupta, S., and Prabhakar, T.~R.}
\newblock Bernoulli polynomials of the second kind and general order.
\newblock {\em Indian J. Pure Appl. Math. 11}, 10 (1980), 1361--1368.

\bibitem{inkeri-auts59}
{\sc Inkeri, K.}
\newblock The real roots of {B}ernoulli polynomials.
\newblock {\em Ann. Univ. Turku. Ser. A I 37\/} (1959), 20.

\bibitem{kim_etal-ade14}
{\sc Kim, T., Kwon, H.~I., Lee, S.~H., and Seo, J.~J.}
\newblock A note on poly-{B}ernoulli numbers and polynomials of the second
  kind.
\newblock {\em Adv. Difference Equ.\/} (2014), 2014:219, 6.

\bibitem{leeming_jat89}
{\sc Leeming, D.~J.}
\newblock The real zeros of the {B}ernoulli polynomials.
\newblock {\em J. Approx. Theory 58}, 2 (1989), 124--150.

\bibitem{lopez-temme_jmaa99}
{\sc L\'{o}pez, J.~L., and Temme, N.~M.}
\newblock Hermite polynomials in asymptotic representations of generalized
  {B}ernoulli, {E}uler, {B}essel, and {B}uchholz polynomials.
\newblock {\em J. Math. Anal. Appl. 239}, 2 (1999), 457--477.

\bibitem{lopez-temme_jmaa10}
{\sc L\'{o}pez, J.~L., and Temme, N.~M.}
\newblock Large degree asymptotics of generalized {B}ernoulli and {E}uler
  polynomials.
\newblock {\em J. Math. Anal. Appl. 363}, 1 (2010), 197--208.

\bibitem{nemes_jis11}
{\sc Nemes, G.}
\newblock An asymptotic expansion for the {B}ernoulli numbers of the second
  kind.
\newblock {\em J. Integer Seq. 14}, 4 (2011), Article 11.4.8, 6.

\bibitem{neuschel_a12}
{\sc Neuschel, T.}
\newblock A uniform version of {L}aplace's method for contour integrals.
\newblock {\em Analysis (Munich) 32}, 2 (2012), 121--135.

\bibitem{norlund_24}
{\sc N{\" o}rlund, N.~E.}
\newblock {\em Vorlesungen {\" u}ber Differenzenrechnung}.
\newblock Springer, Berlin, Germany, 1924.
\newblock reprinted by Chelsea, Bronx, NY, USA, 1954.

\bibitem{norlund_rcmp61}
{\sc N\"{o}rlund, N.~E.}
\newblock Sur les valeurs asymptotiques des nombres et des polyn\^{o}mes de
  {B}ernoulli.
\newblock {\em Rend. Circ. Mat. Palermo (2) 10\/} (1961), 27--44.

\bibitem{olver_97}
{\sc Olver, F. W.~J.}
\newblock {\em Asymptotics and special functions}.
\newblock AKP Classics. A K Peters, Ltd., Wellesley, MA, 1997.
\newblock Reprint of the 1974 original [Academic Press, New York; MR0435697 (55
  \# 8655)].

\bibitem{roman84}
{\sc Roman, S.}
\newblock {\em The umbral calculus}, vol.~111 of {\em Pure and Applied
  Mathematics}.
\newblock Academic Press, Inc. [Harcourt Brace Jovanovich, Publishers], New
  York, 1984.

\bibitem{schroder_zmp80}
{\sc Schr\"{o}der, E.}
\newblock Bestimmung des infinit\"{a}ren {W}erthes des {I}ntegrals
  $\int_{0}^{1}(u)_{n}du$.
\newblock {\em Z. Math. Phys. 25\/} (1880), 106--117.

\bibitem{shapiro-stampach_ca17}
{\sc Shapiro, B., and \v{S}tampach, F.}
\newblock Non-self-adjoint {T}oeplitz matrices whose principal submatrices have
  real spectrum.
\newblock {\em Constr. Approx. 49}, 2 (2019), 191--226.

\bibitem{slavic_ubpefsmf75}
{\sc Slavi\'{c}, D.~V.}
\newblock On coefficients of the {G}auss-{E}ncke formula.
\newblock {\em Univ. Beograd. Publ. Elektrotehn. Fak. Ser. Mat. Fiz.}, 498-541
  (1975), 33--38.

\bibitem{srivastava-choi01}
{\sc Srivastava, H.~M., and Choi, J.}
\newblock {\em Series associated with the zeta and related functions}.
\newblock Kluwer Academic Publishers, Dordrecht, 2001.

\bibitem{srivastava-todorov_jmaa88}
{\sc Srivastava, H.~M., and Todorov, P.~G.}
\newblock An explicit formula for the generalized {B}ernoulli polynomials.
\newblock {\em J. Math. Anal. Appl. 130}, 2 (1988), 509--513.

\bibitem{steffensen_saj24}
{\sc Steffensen, J.~F.}
\newblock On laplace's and gauss' summation-formulas.
\newblock {\em Scandinavian Actuarial Journal 1924}, 1 (1924), 1--15.

\bibitem{vanveen_51}
{\sc van Veen, S.~C.}
\newblock Asymptotic expansion of the generalized {B}ernoulli numbers
  {$B_n^{(n-1)}$} for large values of {$n(n$} integer).
\newblock {\em Nederl. Akad. Wetensch. Proc. Ser. A. {\it 54} = Indagationes
  Math. 13\/} (1951), 335--341.

\bibitem{veselov-ward_jmaa05}
{\sc Veselov, A.~P., and Ward, J.~P.}
\newblock On the real zeroes of the {H}urwitz zeta-function and {B}ernoulli
  polynomials.
\newblock {\em J. Math. Anal. Appl. 305}, 2 (2005), 712--721.

\bibitem{wong01}
{\sc Wong, R.}
\newblock {\em Asymptotic approximations of integrals}, vol.~34 of {\em
  Classics in Applied Mathematics}.
\newblock Society for Industrial and Applied Mathematics (SIAM), Philadelphia,
  PA, 2001.
\newblock Corrected reprint of the 1989 original.

\end{thebibliography}

\end{document}